\numberwithin{equation}{section}
\newtheorem{thm}{Theorem}[section]
\newtheorem{lemma}[thm]{Lemma}
\newtheorem{prop}[thm]{Proposition}
\newtheorem{coroll}[thm]{Corollary}
\theoremstyle{definition}
\newtheorem{defin}[thm]{Definition}
\newtheorem{rem}[thm]{Remark}
\newtheorem{exam}[thm]{Example}
\newcommand{\R}{{\mathbb{R}}}
\newcommand{\T}{{\mathbb{T}}}
\newcommand{\Z}{{\mathbb{Z}}}
\newcommand{\N}{{\mathbb{N}}}
\newcommand{\C}{{\mathbb{C}}}
\newcommand{\K}{{\mathbb{K}}}
\newcommand{\bP}{{\mathbb{P}}}
\newcommand{\cL}{{\mathcal{L}}}
\newcommand{\cN}{{\mathcal{N}}}
\newcommand{\cO}{{\mathcal{O}}}
\newcommand{\cP}{{\mathcal{P}}}
\newcommand{\mft}{{\mathfrak{t}}}
\newcommand{\mfk}{{\mathfrak{k}}}
\newcommand{\abs}[1]{\left| #1 \right|}
\newcommand{\ip}[1]{\langle #1 \rangle}
\newcommand{\w}{\omega}
\def\bK{\mathbb{K}}
\def\g{\gamma}
\def\a{\alpha}
\def\mf{\mathfrak}
\def\CP{\mathbb{C}P}
\def\RP{\mathbb{R}P}
\DeclareMathOperator{\Lie}{Lie}
\DeclareMathOperator{\Giv}{Giv}
\DeclareMathOperator{\std}{std}
\DeclareMathOperator{\Clif}{Clif}
\DeclareMathOperator{\KPS}{KPS}
\newcommand{\fc}{{:\ }}
\newcommand{\ol}{\overline}
\newcommand{\wt}{\widetilde}
\newcommand{\wh}{\widehat}
\newcommand{\tb}{\textbf}
\DeclareMathOperator{\id}{id}
\DeclareMathOperator{\Vol}{Vol}
\DeclareMathOperator{\pr}{pr}
\DeclareMathOperator{\supp}{supp}
\DeclareMathOperator{\Ham}{Ham}
\DeclareMathOperator{\Cont}{Cont}
\DeclareMathOperator{\Diff}{Diff}
\DeclareMathOperator{\Span}{span}
\begin{document}

\title{Quasi-morphisms on contactomorphism groups and contact rigidity}
\author{Matthew Strom Borman\thanks{Partially supported by NSF grants DMS-1006610 and DMS-1304252.}\, and Frol Zapolsky\thanks{
Partially supported by Deutsche Forschungsgemeinschaft grant DFG/CI 45/5-1 while at LMU (Munich).}}

\maketitle

\begin{center}
\textit{Dedicated with gratitude to our teacher Leonid Polterovich on his 50th birthday}
\end{center}

\begin{abstract}
We build homogeneous quasi-morphisms on the universal cover of the contactomorphism group for certain 
prequantizations of monotone symplectic toric manifolds.  This is done using Givental's nonlinear Maslov index and a contact 
reduction technique for quasi-morphisms.  We show how these quasi-morphisms lead to a hierarchy of rigid subsets of contact 
manifolds.  We also show that the nonlinear Maslov index has a vanishing property, which plays a key role in our proofs.
Finally we present applications to orderability of contact manifolds and Sandon-type metrics on contactomorphism groups.
\end{abstract}

\section{Introduction and results}\label{section_intro}

\subsection{Quasi-morphisms on contactomorphism groups}\label{section_qms}

A \textbf{quasi-morphism} on a group $G$ is a function $\mu \fc G \to \R$ which is a homomorphism up to a bounded error, 
that is there is $D > 0$ such that
\begin{equation}\label{e:Quasimorphism}
\abs{\mu(ab)-\mu(a)-\mu(b)} \leq D \quad\mbox{for all}\quad a,b \in G\,,
\end{equation}
and it is \textbf{homogeneous} if $\mu(a^{k}) = k\mu(a)$ for all $a \in G$ and $k \in \Z$.  
It is straightforward to show that homogeneous quasi-morphisms are conjugation-invariant and restrict to homomorphisms on abelian subgroups. 
See \cite{Bav91L, Cal09S, Kot04W} for background on quasi-morphisms, their connection with bounded cohomology, and their applications to commutator length and other quantitative group-theoretic questions.
For the sake of exposition, in this paper by quasi-morphism we will mean a non-zero, homogeneous quasi-morphism.

The construction and applications of quasi-morphisms on infinite-dimensional groups of symmetries have recently been a popular theme 
of research \cite{Ent04C, FOOO11S, GamGhy04C, Ghy01G,   Ghy07K, Pol06F, She11T, Ush11D}.  One reason is that groups 
of diffeomorphisms are often perfect \cite{Ban97T}, and thus admit no non-zero homomorphisms to $\R$ and so one is 
led to study quasi-morphisms on them instead. When the group has an interesting metric such as the hydrodynamic metric on the group
of volume-preserving diffeomorphisms of a Riemannian manifold \cite{Bra12Q, BraShe13O} or Hofer's metric on the Hamiltonian group 
of a symplectic manifold \cite{EntPol03C, Py06Q}, quasi-morphisms can be used to understand the coarse geometry of these 
groups.  Another reason is that oftentimes quasi-morphisms on the symmetry groups of symplectic and contact manifolds lead to results on the geometry of the underlying manifolds themselves, which is also the case in the present paper.

We will only consider contact manifolds $(V,\xi)$ where $V$ is connected and closed, unless stated otherwise, and $\xi$ is a 
cooriented contact structure.
We will write $(V, \xi, \alpha)$ if we want to specify a choice of a coorienting contact form $\alpha$ such that $\xi = \ker \alpha$.  The
Reeb vector field associated to a contact form $\alpha$ will be denoted $R_{\alpha}$ and is uniquely defined by
$$
	\alpha(R_{\alpha}) = 1 \quad\mbox{and}\quad \iota_{R_{\alpha}}d\alpha = 0\,.
$$
Let $\Cont_0(V, \xi)$ be the identity component of the group of contactomorphisms and denote by
$\wt \Cont_0(V,\xi)$ its universal cover. 

Given a smooth time-dependent function $h \fc [0,1] \times V \to \R$, called a \tb{contact Hamiltonian}, there is a unique time-dependent vector field 
$\{X_{h_t}\}_{t \in [0,1]}$ satisfying 
\begin{equation}\label{e:ContactHam}
\mbox{$\alpha(X_{h_t}) = h_t$ \quad and\quad $d\alpha(X_{h_t},\cdot) = -dh_t + dh_t(R_\alpha)\alpha$ \quad where $h_{t} = h(t, \cdot)$.}
\end{equation}
The vector field $\{X_{h_{t}}\}$ preserves $\xi$ and integrates into a contact isotopy denoted $\{\phi_h^t\}_{t \in [0,1]}$. This establishes 
a bijection, depending on the contact form $\alpha$, between smooth functions $h \fc [0,1] \times V \to \R$ and contact isotopies based at the identity $\id$ of $V$. If $h,g \in C^\infty(V)$, then
\begin{equation}\label{e:ContactPB}
	\{h,g\}_\alpha := -dg(X_h) + dh(R_\alpha)g
\end{equation}
is the contact Hamiltonian corresponding to the Lie bracket of $X_h$ and $X_g$. We write $\wt\phi_h$ for the element of 
$\wt\Cont_{0}(V, \xi)$ represented by the contact isotopy $\{\phi^{t}_h\}_{t \in [0,1]}$.  For the constant function $h=1$, the 
vector field $X_{1} = R_{\alpha}$ is the Reeb vector field and hence $\wt\phi_{1}$ is the element generated by the Reeb flow.

Following Eliashberg--Polterovich \cite{EliPol00P} for $\wt\phi \in \wt\Cont_0(V,\xi)$ we will write $\id \preceq \wt{\phi}$ if there is 
a nonnegative contact Hamiltonian $h$ such that $\wt\phi = \wt\phi_h$ in $\wt\Cont_{0}(V, \xi)$. 
The nonnegativity of $h$ is equivalent to $X_{h_{t}}$ being nowhere negatively transverse to $\xi$, and 
therefore it is independent of $\alpha$. This induces a reflexive and transitive relation on $\wt\Cont_{0}(V, \xi)$ where
\begin{equation}\label{e:PartialOrder}
	\wt\phi \preceq \wt\psi \quad\mbox{if and only if}\quad \id \preceq \wt\phi^{-1}\wt\psi
\end{equation}
which is also bi-invariant \cite{EliPol00P}.  The contact manifold $(V, \xi)$ is called \textbf{orderable} if $\preceq$ is a partial order
on $\wt{\Cont}_{0}(V, \xi)$, that is $\preceq$ is also anti-symmetric.  

\begin{defin}
For a quasi-morphism $\mu\fc \wt\Cont_{0}(V, \xi) \to \R$, define the following properties:
\begin{enumerate}
	\item[(i)] \textbf{Monotone:} $\wt\phi \preceq \wt\psi$ implies $\mu(\wt\phi) \leq \mu(\wt\psi)$.
	\item[(ii)] \textbf{$C^0$-continuous:} 	If $h$ is a smooth contact Hamiltonian
	and there is a sequence of smooth contact Hamiltonians
	$h^{(n)}$ such that $h^{(n)} \to h$ in $C^0([0,1] \times V)$, then $\mu(\wt\phi_{h^{(n)}}) \to \mu(\wt\phi_h)$.
	\item[(iii)] \textbf{Vanishing:} If $U \subset V$ is an open subset and there is $\psi \in \Cont_0(V,\xi)$ with 
	$\psi(U) \cap U = \emptyset$, then $\mu(\wt\phi_{h}) = 0$ for all contact Hamiltonians $h$ with 
	$\supp(h) \subset [0,1] \times U$.
\end{enumerate}
\end{defin}
\noindent
In general, a subset $S \subset V$ is \textbf{displaceable} if there is $\psi \in \Cont_{0}(V, \xi)$ with $\psi(S) \cap \ol{S} = \emptyset$. Note that the vanishing property is independent of the choice of a contact form.

\subsubsection{Givental's asymptotic nonlinear Maslov index}

Besides Poincar\'e's rotation number on $\wt\Cont_0(S^1) \equiv \wt{\Diff}_{0}(S^{1})$, the only previous construction of quasi-morphisms on contactomorphism groups was Givental's \textbf{asymptotic nonlinear Maslov index} \cite[Section 9]{Giv90N}
\begin{equation}\label{e:GiventalQM}
	\mu_{\Giv}\fc \wt\Cont_{0}(\RP^{2d-1}) \to \R\,,
\end{equation}
with $\R P^{2d-1}$ being taken with the standard contact structure. 
Results in \cite[Section 9]{Giv90N} imply $\mu_{\Giv}$ is a homogeneous quasi-morphism, as Ben Simon \cite[Theorem 0.2]{Ben07T} proved.
In Section~\ref{s:VanishingProof} we will review the definition and relevant properties of
Givental's quasi-morphism, and prove the following proposition.

\begin{prop}\label{prop_ANLMI_has_vanishing}
	Givental's quasi-morphism $\mu_{\Giv}\fc \wt{\Cont}_0(\RP^{2d-1}) \to \R$ 
	is (i) monotone, (ii) $C^0$-continuous, and (iii) has the vanishing property.
\end{prop}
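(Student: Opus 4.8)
The plan is to establish the three properties in increasing order of difficulty, relying on an explicit description of $\mu_{\Giv}$ via the spectral theory of the generating quadratic-at-infinity functions (or, equivalently, via the filtered $\Z/2$-equivariant Floer/contact homology of the prequantization of $\CP^{d-1}$). Recall that $\mu_{\Giv}$ arises from an asymptotic count: one assigns to each $\wt\phi \in \wt\Cont_0(\RP^{2d-1})$ a sequence of Maslov-type indices $m_k(\wt\phi)$ coming from a spectral invariant of the $k$-th power (or the degree-$k$ piece of the equivariant theory), and $\mu_{\Giv}(\wt\phi) = \lim_{k\to\infty} m_k(\wt\phi)/k$. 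All three properties will be read off from the monotonicity and continuity of these intermediate indices with respect to the contact Hamiltonian.

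First, for \textbf{monotonicity (i)}: if $\id \preceq \wt\phi^{-1}\wt\psi$, then $\wt\phi^{-1}\wt\psi = \wt\phi_h$ for a nonnegative contact Hamiltonian $h$. The standard sandwiching argument applies: conjugating and using the bi-invariance of $\preceq$, one reduces to showing $\id \preceq \wt\phi_h$ implies $\mu_{\Giv}(\wt\phi_h) \ge 0$, which in turn follows because each $m_k$ is monotone under $\preceq$ — a nonnegative Hamiltonian generates a path along which the relevant spectral values move in one direction, so $m_k(\wt\phi) \le m_k(\wt\psi)$ whenever $\wt\phi \preceq \wt\psi$; dividing by $k$ and passing to the limit gives the claim. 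For \textbf{$C^0$-continuity (ii)}: the key is a Lipschitz-type estimate $|m_k(\wt\phi_h) - m_k(\wt\phi_g)| \le C k \cdot \|h-g\|_{C^0}$ for the intermediate indices, with $C$ independent of $k$; this is the spectral-norm continuity for Givental's construction (essentially the fact that the nonlinear Maslov index of a contact Hamiltonian path changes by a controlled amount under $C^0$-perturbations of the Hamiltonian, which holds for the standard $\RP^{2d-1}$ because the underlying generating functions depend continuously on $h$ in $C^0$). Dividing by $k$ and taking the limit yields $|\mu_{\Giv}(\wt\phi_h) - \mu_{\Giv}(\wt\phi_g)| \le C\|h-g\|_{C^0}$, which gives (ii) at once.

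The main work is \textbf{vanishing (iii)}. Suppose $h$ is supported in $[0,1]\times U$ with $\psi(U)\cap U = \emptyset$ for some $\psi \in \Cont_0(V,\xi)$. The strategy is to exploit conjugation-invariance of the homogeneous quasi-morphism together with a \emph{displacement trick at the level of the intermediate indices}: show that $m_k(\wt\phi_h)$ is bounded \emph{independently of $k$} (by a constant depending only on $\psi$, e.g. governed by the maximal oscillation of a contact Hamiltonian generating $\psi$), so that upon dividing by $k$ and letting $k\to\infty$ we get $\mu_{\Giv}(\wt\phi_h)=0$. Concretely, since $\wt\phi_h$ is generated by a Hamiltonian supported in the displaceable set $U$, one can write $\wt\phi_h^k$ — or rather compare $m_k(\wt\phi_h)$ against $m_k(\id)$ — using that $\psi \wt\phi_h \psi^{-1}$ is generated by a Hamiltonian supported in $\psi(U)$, which is disjoint from $U$, so the two commute and the product $\wt\phi_h \cdot (\psi\wt\phi_h^{-1}\psi^{-1})$ is "small" in the sense that it can be bracketed between $\wt\phi_{\mp c}$ for a constant $c$ tied to $\psi$; combining this with the quasi-morphism inequality \eqref{e:Quasimorphism}, homogeneity, conjugation-invariance, and monotonicity (i) forces $\mu_{\Giv}(\wt\phi_h)$ into a bounded interval, hence $\mu_{\Giv}(\wt\phi_h)=k^{-1}\mu_{\Giv}(\wt\phi_h^k)$ is $0$. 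The hard part is making the "bracketing by a constant $c$" step precise: one must control the contact Hamiltonian generating the commutator $\psi\wt\phi_h\psi^{-1}\wt\phi_h^{-1}$ — this uses the conformal factor of $\psi$ and the fact that $h$ and its pushforward under $\psi$ have disjoint supports, so the composite isotopy is a concatenation whose generating Hamiltonian has oscillation controlled purely by $\psi$ and not by $h$ or $k$. This disjointness-plus-conformal-factor bookkeeping, rather than any Floer-theoretic input, is where the real care is required; I expect the authors to isolate it as the crux of the argument.
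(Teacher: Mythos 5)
Your treatment of monotonicity is essentially the paper's argument (nonnegativity of the index along nonnegative paths, then homogenize), but the other two parts have genuine gaps, and the one in the vanishing property is fatal to the proposed scheme. The "bracketing" step cannot work as stated: the element $\wt\phi_h\cdot(\psi\wt\phi_h^{-1}\psi^{-1})$ is generated by a contact Hamiltonian which equals $h$ on $U$ and (a conformally rescaled) $-h$ on $\psi(U)$, so any bracketing $\wt\phi_{-c}\preceq \cdot \preceq \wt\phi_{c}$ forces $c$ to be of the order of $\|h\|_{C^0}$ times the conformal factor of $\psi$ --- it cannot depend on $\psi$ alone. Worse, even granting such control, the purely formal ingredients you invoke (the quasi-morphism inequality \eqref{e:Quasimorphism}, homogeneity, conjugation-invariance, monotonicity) cannot yield $\mu_{\Giv}(\wt\phi_h)=0$: since the two factors commute and homogeneous quasi-morphisms are conjugation-invariant and additive on commuting elements, one gets $\mu_{\Giv}\big(\wt\phi_h\cdot\psi\wt\phi_h^{-1}\psi^{-1}\big)=\mu_{\Giv}(\wt\phi_h)-\mu_{\Giv}(\wt\phi_h)=0$ identically, which carries no information. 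The symplectic analogue shows why no such formal argument can exist: Entov--Polterovich's quasi-morphisms on $\wt\Ham$ are homogeneous, monotone (stable) and restrict on displaceably-supported subgroups to the \emph{Calabi homomorphism}, which is nonzero; the displaced-commutator trick only proves that the restriction is a homomorphism, never that it vanishes. The vanishing property therefore requires specific input from Givental's construction, and this is exactly what the paper uses: the nonlinear Maslov index is an intersection number with the discriminant and vanishes on paths avoiding it \eqref{e:vanish}; choosing $\psi$ displacing $U$ and having no discriminant points, one checks that $\phi_h^t\psi$ has no discriminant points for any $t$ (a discriminant point $p$ would either lie in $U$, contradicting $\psi(U)\cap U=\emptyset$, or be a discriminant point of $\psi$), so $\mu^{G}(\{\phi_h^t\psi\}_{t\in[0,\tau]})=0$ for all $\tau$, whence by \eqref{e:QMP} the unhomogenized index $\mu^G(\{\phi_h^t\}_{t\in[0,\tau]})$ is bounded by $2d$ uniformly in $\tau$ and $\mu_{\Giv}(\wt\phi_h)=0$. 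Your proposal never touches the discriminant-avoidance mechanism, which is the actual crux.

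For $C^0$-continuity, the Lipschitz-type estimate $|m_k(\wt\phi_h)-m_k(\wt\phi_g)|\le Ck\|h-g\|_{C^0}$ is asserted rather than proved, and it is not available in the cited sources; moreover, in the contact setting such estimates are obstructed by conformal factors (the Hamiltonian generating $\wt\phi_h\wt\phi_g^{-1}$ is not controlled by $\|h-g\|_{C^0}$), so this claim would itself require a genuine argument inside Givental's framework. The paper's route is weaker but sufficient and justified: by \cite[Theorem 9.1(c)]{Giv90N}, for $h^{(n)}$ sufficiently $C^0$-close to $h$ the indices of the paths over a fixed interval $[0,m]$ differ by at most $2d$, and combining this with the concatenation and composition estimates \eqref{e:concat}, \eqref{e:QMP} gives a bound of $6dN$ over $[0,Nm]$, which after dividing by $Nm$ yields sequential continuity --- no Lipschitz statement is needed or claimed.
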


For time-independent contact Hamiltonians Givental proved \cite[Corollary 3, Section 9]{Giv90N} that $\mu_{\Giv}$ is monotone and 
$C^0$-continuous, and as we will explain his proofs work in general.  The vanishing property, which does not appear in \cite{Giv90N},
together with Theorem~\ref{thm_HamQuasimorphism} below give an alternative proof of Ben Simon's \cite[Theorem 0.6]{Ben07T}.

\subsubsection{Quasi-morphisms for prequantizations of even toric manifolds}
  
A \textbf{prequantization} of a symplectic manifold $(M, \w)$ is a contact manifold $(V, \xi, \alpha)$
with a map
$\pi\fc (V, \alpha) \to (M, \w)$ 
defining a principal $S^{1}$-bundle
such that $\pi^{*}\w = d\alpha$, and the Reeb vector
field $R_{\alpha}$ induces the free $S^{1}$-action on $V$ where $S^{1} = \R/\hbar\Z$, $\hbar > 0$ being the minimal period of a closed Reeb orbit.

A \textbf{toric} symplectic manifold $(M^{2n},\omega, \T)$ is a symplectic manifold endowed with an effective Hamiltonian action of a torus $\T$ of dimension $n$.  
The action is induced by a moment map $M \to \mft^*$, where $\mft^*$ is the dual of the Lie algebra $\mft$ of $\T$,
and the image of the moment map is called the moment polytope and denoted $\Delta$. If $\Delta$ has $d$ facets, then it is given by
\begin{equation}\label{e:Polytope}
	\Delta = \{x \in \mf{t}^{*}\,|\, \ip{\nu_{j}, x} + a_{j} \geq 0 \mbox{ for $j=1, \dots, d$}\}\,,
\end{equation}
where the conormals $\nu_j$ are primitive vectors in the integer lattice $\mft_\Z := \ker (\exp \fc\mft \to \T)$. 

A symplectic manifold $(M,\omega)$ is \tb{monotone} if and only if there is a positive constant $\lambda > 0$ so that
$[\w] = \lambda\,c_{1}(M) \in H^{2}(M;\R)$, and for toric manifolds this is equivalent to being able to choose the moment map so that $a_1=\dots=a_d = \lambda $. We call the moment polytope $\Delta$ \tb{even} if $\sum_{j=1}^{d}\nu_j \in 2\mft_\Z$ and we say that
a toric manifold is even if its associated moment polytope is even.  
In Section~\ref{s:EvenExample} we give examples of closed monotone even symplectic toric manifolds. 
We can now formulate our main result.

\begin{thm}\label{thm_main}
	Every closed monotone even toric symplectic manifold $(M, \w, \T)$
	has a prequantization $(\wh{M}, \xi, \alpha)$ for which there is a quasi-morphism 
	$$
	\mu\fc \wt\Cont_{0}(\wh{M}, \xi) \to \R
	$$
	that is monotone, has the vanishing property, and is $C^0$-continuous.
\end{thm}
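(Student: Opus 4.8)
The plan is to realize the prequantization $\wh M$ of a monotone even toric manifold $(M,\w,\T)$ as a contact reduction of the standard sphere $S^{2N-1}$ — equivalently of $\RP^{2N-1}$ — where $N$ is the number of facets $d$ of the moment polytope (or related to it). The key idea exploiting evenness is the Delzant-type construction: a monotone toric manifold with polytope $\Delta$ having $d$ facets is the symplectic reduction $\C^d /\!\!/ K$ of $\C^d$ by a subtorus $K \subset \T^d$ determined by the conormals $\nu_j$, and the prequantization bundle over $M$ is obtained by an analogous contact reduction of $S^{2d-1}$ by $K$. The evenness condition $\sum_j \nu_j \in 2\mft_\Z$ is precisely what is needed to lift the $K$-action to an action on $\RP^{2d-1}=S^{2d-1}/\{\pm1\}$ (rather than merely $S^{2d-1}$), which is the contact manifold on which Givental's quasi-morphism $\mu_{\Giv}$ lives by \eqref{e:GiventalQM}. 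So the first step is to set up this contact reduction carefully and identify the reduced contact manifold with a prequantization $(\wh M,\xi,\alpha)$ in the sense defined above, checking that $\pi^*\w=d\alpha$ and that the Reeb flow generates the $S^1=\R/\hbar\Z$ action.

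The second step is the "contact reduction technique for quasi-morphisms" advertised in the abstract: given that $K$ acts on $\RP^{2d-1}$ preserving the contact structure and commuting appropriately, one shows that $\mu_{\Giv}$ descends to a quasi-morphism $\mu$ on $\wt\Cont_0(\wh M,\xi)$. Concretely, a contactomorphism of $\wh M$ isotopic to the identity should be lifted to a $K$-invariant contactomorphism of (a neighborhood of the coisotropic reduction locus in) $\RP^{2d-1}$, and then $\mu_{\Giv}$ is evaluated there; one must check this is well-defined (independent of the lift, using invariance properties of $\mu_{\Giv}$ such as its behavior under the relevant group actions and its $C^0$-continuity), is a homogeneous quasi-morphism, and is non-zero. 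Non-triviality should follow by evaluating $\mu$ on the lift of the Reeb flow of $\wh M$, tracing through to a nonzero value of $\mu_{\Giv}$ on a Reeb-type element of $\wt\Cont_0(\RP^{2d-1})$.

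The third step is to transport the three properties of $\mu_{\Giv}$ established in Proposition \ref{prop_ANLMI_has_vanishing} — monotonicity, $C^0$-continuity, and vanishing — through the reduction. Monotonicity should pass through because a nonnegative contact Hamiltonian on $\wh M$ lifts to a nonnegative (invariant) contact Hamiltonian on $\RP^{2d-1}$, so $\id\preceq\wt\phi$ upstairs whenever it holds downstairs. The $C^0$-continuity of $\mu$ follows from that of $\mu_{\Giv}$ together with the observation that the lifting procedure is $C^0$-continuous on Hamiltonians. For the vanishing property, if $U\subset\wh M$ is displaced by some $\psi\in\Cont_0(\wh M,\xi)$, then its preimage (a $K$-invariant open set) in $\RP^{2d-1}$ is displaced by the lift of $\psi$, and a Hamiltonian supported in $[0,1]\times U$ lifts to one supported over the preimage, so vanishing upstairs gives vanishing downstairs.

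The main obstacle I expect is the second step: making the contact reduction of quasi-morphisms rigorous, in particular showing that the lift of a contactomorphism of $\wh M$ to an invariant contactomorphism defined near the reduction locus can be done coherently at the level of universal covers (so that $\wt\Cont_0(\wh M,\xi)\to\wt\Cont_0(\RP^{2d-1})$ is at least defined as a partial procedure compatible with composition up to the bounded error), and that $\mu_{\Giv}$ is insensitive to the ambiguities in this lift — this is where one genuinely needs the invariance and continuity properties of Givental's construction rather than just its formal quasi-morphism property. A secondary technical point is the careful bookkeeping of lattices and the role of evenness in the Delzant construction to ensure the $\{\pm1\}$-quotient to $\RP^{2d-1}$ is compatible with the $K$-action; this is essentially combinatorial but must be done precisely to pin down the correct prequantization $\wh M$ and the value of $\hbar$.
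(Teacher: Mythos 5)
Your overall architecture does match the paper's: a Delzant-type contact reduction of $\RP^{2d-1}$ (evenness ensuring the antipodal element $\tau$ lies in the reducing torus $\bK$, so the reduced space is a genuine prequantization of $M$), followed by descending Givental's quasi-morphism and transferring monotonicity, vanishing, and $C^0$-continuity. However, there is a genuine gap at the heart of your second step. The paper does not lift contactomorphisms of $\wh M$ to invariant contactomorphisms near the reduction locus; it defines $\ol{\mu}(\wt\phi_{\ol h}) := \mu_{\Giv}(\wt\phi_h)$ where $h$ is an \emph{arbitrary} extension of $\rho^*\ol h$ off the strictly coisotropic submanifold $Y$, and the well-definedness (independence of the extension), the homogeneous quasi-morphism property, and the descent from the path space to $\wt\Cont_0(\wh M)$ (killing contractible loops) all hinge on one hypothesis your sketch never secures: that $Y$ is $\mu_{\Giv}$-\emph{subheavy}, i.e.\ $\mu_{\Giv}(\wt\phi_h)=0$ whenever $h$ is autonomous and vanishes on $Y$. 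No ``invariance property'' of $\mu_{\Giv}$ under the group action, nor its $C^0$-continuity, can substitute for this: two admissible lifts or extensions differ by isotopies whose Hamiltonians merely vanish along $Y$, and in general such isotopies have nonzero asymptotic nonlinear Maslov index, so without subheaviness the putative $\mu$ is simply not well-defined (this is exactly the role of Lemma~\ref{l:CommuteRho} and the subheavy hypothesis in Theorem~\ref{thm_ART}).

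Establishing subheaviness of $Y$ is also precisely where the monotonicity of $(M,\w)$ enters, a point your proposal uses only nominally. With the monotone normalization $c=\iota^*(1,\dots,1)$, the level set $P_{\bK}^{-1}(c)$ contains the torus $\{\abs{z_1}^2=\dots=\abs{z_d}^2=1/\pi\}$, hence $Y \supset T_{\RP}$, and $T_{\RP}$ is $\mu_{\Giv}$-superheavy (Lemma~\ref{exam_torus_RP_superheavy}: its image in $\CP^{d-1}$ is a stem, combined with the vanishing property of $\mu_{\Giv}$ via Theorem~\ref{thm_HamQuasimorphism} and Corollary~\ref{c:Stem}); superheaviness of a subset forces subheaviness of $Y$. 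For a non-monotone normalization the torus misses the level set and this mechanism breaks, which is why monotonicity is in the hypotheses. A secondary omission: one must choose a primitive $\gamma\in\mfk_\Z$ with all entries positive so that $P_{\bK}^{-1}(c)$ lies on the contact-type sphere $S^{2d-1}_\gamma$ (hence $Y\subset\RP^{2d-1}_\gamma$), and check that $Y$ is strictly coisotropic with the Reeb field tangent to it; these are routine but necessary to place the construction in the setting of Theorem~\ref{thm_ART}.
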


In Section \ref{s:SymplecticQMQS} below we discuss the significance of this theorem in the context of stable Calabi quasi-morphisms on the universal cover of the Hamiltonian group of a symplectic manifold.

Theorem~\ref{thm_ART} below shows how a monotone quasi-morphism on $\wt\Cont_{0}(V)$ can
induce a monotone quasi-morphism on $\wt\Cont_{0}(\ol{V})$ if $(\ol{V}, \ol{\xi})$ is the result of performing
contact reduction on $(V, \xi)$.
In Section~\ref{s:BuildPrequantization} we will show how the even moment polytope of a monotone toric manifold $(M, \w, \T)$ naturally
leads to a prequantization $(\wh{M}, \xi, \alpha)$ obtained from $\RP^{2d-1}$ via contact reduction.
The proof of Theorem~\ref{thm_main} is then given in Section~\ref{s:ApplyingART} where
we apply Theorem~\ref{thm_ART} to Givental's quasi-morphism $\mu_{\Giv}$ on $\wt\Cont_{0}(\RP^{2d-1})$ to build 
the monotone quasi-morphisms $\mu\fc \wt\Cont_{0}(\wh{M}, \xi) \to \R$.

	Not all prequantizations $\pi \fc(V, \alpha) \to (M, \w)$ of a monotone even toric symplectic manifold $(M, \w)$ admit
	non-trivial monotone quasi-morphisms on $\wt\Cont_0(V)$.
	This is because if $V$ is not orderable, then there is no monotone quasi-morphism on $\wt\Cont_0(V)$ 
	(see Theorem \ref{t:orderability} below).
	The basic example is the standard contact sphere $S^{2d-1}$ for
	$d \geq 2$, which is a prequantization of the even toric manifold $\CP^{d-1}$ but is not orderable.
	See Section~\ref{section_orderability} for further discussion about orderability and quasi-morphisms.  

\begin{rem}If $\pi \fc (V, \alpha) \to (M, \w)$ is a prequantization, then for the subgroup $\Z_{k} \leq S^{1}$ the quotient manifold 
$V/\Z_k$ is also a prequantization of $M$.  Pulling back contact Hamiltonians via the projection $V \to V/\Z_k$ 
induces a homomorphism $\wt\Cont_0(V/\Z_k) \to \wt\Cont_0(V)$
and therefore the quasi-morphisms of Theorem \ref{thm_main} give rise to quasi-morphisms on $\wt\Cont_0(\wh{M}/\Z_k)$.
\end{rem}

\begin{rem}
	There is work in progress by Karshon--Pabiniak--Sandon \cite{KarPabSan13P} to generalize Givental's 
	construction of the asymptotic nonlinear Maslov index, with lens spaces being the first step.
	If for a prime $p$ there is a monotone quasi-morphism with the vanishing property
	$$
		\mu_{\KPS}\fc \wt\Cont_{0}(S^{2d-1}\!/\Z_{p}) \to \R
	$$
	where $\Z_{p}$ acts by multiplication by a $p$-th root of unity, then
	Theorem~\ref{thm_main} would generalize to the
	closed monotone toric symplectic manifolds $(M, \w, \T)$
	whose sum of conormals in the moment polytope satisfies $\sum_{j=1}^{d}\nu_j \in p\cdot\mft_\Z$
	(see the proof of Lemma~\ref{l:Even}).	
\end{rem}

\subsubsection{Reduction for quasi-morphisms on contactomorphism groups}\label{section_reduction_method_qms}

In \cite{Bor12S, Bor13Q} a procedure for pushing forward quasi-morphisms on the universal cover of the Hamiltonian group of a symplectic manifold via symplectic reduction was developed.
In this paper we will streamline this technique and adapt it to the contact setting in Theorem~\ref{thm_ART}, which will be used to prove 
Theorem~\ref{thm_main}.   Before we can formulate the reduction theorem for quasi-morphisms, we need the following two definitions.

\begin{defin}\label{d:StrictlyCoiso}
	For a contact manifold $(V, \xi, \alpha)$, a closed submanifold $Y \subset V$ transverse to $\xi$ is
	\textbf{strictly coisotropic with respect to $\alpha$}
	if it is coisotropic, that is the subbundle $TY \cap \xi$ of the symplectic vector bundle $(\xi|_Y,d\alpha)$ is coisotropic:\
	\begin{equation}\label{e:Coisotropic}
		\{ X \in \xi_{y} \mid \iota_{X} d\alpha = 0 \mbox{ on } T_{y}Y \cap \xi_{y}\} \subset T_{y}Y \cap \xi_{y}
		\quad\mbox{for all $y \in Y$},
	\end{equation}
	and additionally $R_{\alpha}(y) \in T_{y}Y$ for all $y \in Y$, that is\ the Reeb vector field is tangent to $Y$.
\end{defin}
\noindent
The property of being coisotropic is independent of the contact form and, assuming transversality, being strictly co\-iso\-tropic with respect to $\alpha$ is equivalent to
\begin{equation}\label{e:StrictCoisotropic}
	T_{y}Y^{d\alpha}:= \{ X \in T_{y}V \mid \iota_{X} d\alpha = 0 \mbox{ on } T_{y}Y\} \subset T_{y}Y
	\quad\mbox{for all $y \in Y$}.
\end{equation}
One can check that $Y \subset (V, \xi)$ is strictly coisotropic with respect to some contact form if and only if $Y$ is the diffeomorphic
image of a coisotropic submanifold under the projection $SV \to V$ where $SV$ is the symplectization of $V$.

\begin{defin}\label{d:subheavy}
Let $\mu \fc \wt\Cont_0(V,\xi) \to \R$ be a monotone quasi-morphism. A closed subset\footnote{See Remark \ref{r:rigidSetsClosed} regarding the closed assumption, which also applies to the definition of superheavy below.} $Y \subset V$ is $\mu$-\tb{subheavy}
if
$$\mu(\wt\phi_h) = 0$$
whenever $h$ is an autonomous contact Hamiltonian with $h|_{Y} = 0$.
\end{defin}

\noindent
Here now is the reduction theorem for quasi-morphisms on contactomorphism groups, which we will prove in Section~\ref{s:ProofOfART}.
Consider the setting
\begin{equation}\label{e:SetUpART}
	(V, \xi, \alpha) \supset (Y, \alpha|_{Y}) \stackrel{\rho}{\longrightarrow} (\ol{V}, \ol{\xi}, \ol{\alpha})
\end{equation}
where $(V, \xi, \alpha)$ and $(\ol{V}, \ol{\xi}, \ol{\alpha})$ are closed contact manifolds,
$Y \subset V$ is a closed submanifold that is strictly coisotropic with respect to $\alpha$,
and $\rho\fc Y \to \ol{V}$ is a fiber bundle such that $\rho^{*}\ol{\alpha} = \alpha|_{Y}$.

\begin{thm}\label{thm_ART}
	In the setting \eqref{e:SetUpART} if $Y \subset V$ is subheavy for a monotone quasi-morphism
	$\mu \fc \wt\Cont_{0}(V, \xi) \to \R$, then it induces
	a monotone quasi-morphism
	\begin{equation}\label{e:ReducedQM}
	\overline{\mu}\fc \wt\Cont_{0}(\overline{V}, \overline{\xi}) \to \R
	\quad\mbox{defined by}\quad
	\overline{\mu}(\wt{\phi}_{\overline{h}}) := \mu(\wt{\phi}_{h})
	\end{equation}
	where $h \in C^{\infty}([0,1] \times V)$ is any contact Hamiltonian such that
	$h|_{[0,1]\times Y} = \rho^{*}\overline{h}$.
	The vanishing property and $C^0$-continuity passes from $\mu$ to $\overline{\mu}$.
\end{thm}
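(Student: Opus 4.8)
The plan is to define $\overline{\mu}$ by the formula in \eqref{e:ReducedQM} and verify, in order: (1) well-definedness, (2) the quasi-morphism property, (3) homogeneity and non-triviality, (4) monotonicity, and finally (5) the inheritance of the vanishing property and of $C^0$-continuity. For well-definedness, given $\overline{h} \in C^\infty([0,1]\times\overline{V})$ choose any extension $h$ with $h|_{[0,1]\times Y} = \rho^*\overline{h}$; such extensions exist because $Y$ is closed. If $h'$ is another such extension, then $h - h'$ vanishes on $[0,1]\times Y$, so by the subheaviness of $Y$ for $\mu$ we get $\mu(\wt\phi_{h-h'}) = 0$. The key point is then to relate $\mu(\wt\phi_h)$, $\mu(\wt\phi_{h'})$, and $\mu(\wt\phi_{h-h'})$: one shows that $\wt\phi_h$ and $\wt\phi_{h'}\wt\phi_{h-h'}$ differ by a loop's worth of correction, or more precisely uses that the concatenation/inverse operations on contact isotopies translate into algebra on $\wt\Cont_0$ up to commutators, and that a homogeneous quasi-morphism is a homomorphism on each abelian subgroup and is conjugation-invariant, hence changes by at most the defect $D$ under the difference of two isotopies generating the same path up to the subheavy correction; the cleanest route is to observe $\wt\phi_{h'}^{-1}\wt\phi_h = \wt\phi_{g}$ where $g$ vanishes on $[0,1]\times Y$ (after a suitable reparametrization, using \eqref{e:ContactHam} and the gauge relation for composed contact isotopies), so $\mu(\wt\phi_h) = \mu(\wt\phi_{h'})$ up to $D$, and then homogeneity upgrades "up to $D$" to equality by the standard $a^k$ trick. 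I expect this well-definedness step to be the main obstacle, since it requires carefully unwinding how the contact-Hamiltonian–to–isotopy correspondence behaves under composition relative to the submanifold $Y$, and checking that the correction Hamiltonian genuinely vanishes on $Y$ (here the strict coisotropy with respect to $\alpha$ and the condition $\rho^*\overline{\alpha} = \alpha|_Y$ are what guarantee the restricted isotopies close up correctly).

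Once $\overline{\mu}$ is well-defined, the quasi-morphism inequality \eqref{e:Quasimorphism} for $\overline{\mu}$ follows directly from that for $\mu$ with the same defect $D$: given $\overline{h}_1, \overline{h}_2$, pick extensions $h_1, h_2$, and note that an extension of the contact Hamiltonian generating $\wt\phi_{\overline{h}_1}\wt\phi_{\overline{h}_2}$ can be taken to be the corresponding composition Hamiltonian built from $h_1, h_2$ (which restricts on $Y$ to the composition Hamiltonian built from $\rho^*\overline{h}_1, \rho^*\overline{h}_2$, since $\rho$ is a bundle map intertwining the contact forms); applying the bound for $\mu$ gives $|\overline{\mu}(\wt\phi_{\overline{h}_1}\wt\phi_{\overline{h}_2}) - \overline{\mu}(\wt\phi_{\overline{h}_1}) - \overline{\mu}(\wt\phi_{\overline{h}_2})| \le D$. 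Homogeneity is similar: a contact Hamiltonian for $\wt\phi_{\overline{h}}^{\,k}$ extends to one for $\wt\phi_h^{\,k}$, so $\overline{\mu}(\wt\phi_{\overline{h}}^{\,k}) = \mu(\wt\phi_h^{\,k}) = k\,\mu(\wt\phi_h) = k\,\overline{\mu}(\wt\phi_{\overline{h}})$. Non-triviality: since $\overline{\mu}$ restricted to Reeb-type elements picks up $\mu$ of their extensions, and $\mu$ is non-zero, one exhibits an $\overline{h}$ whose extension has nonzero $\mu$; alternatively non-triviality can be deferred, as the paper's convention folds it into "quasi-morphism" but monotonicity plus a concrete computation (e.g. on $\wt\phi_1$ for the reduced Reeb flow) suffices.

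For monotonicity, suppose $\wt\phi_{\overline{g}} \preceq \wt\phi_{\overline{h}}$ in $\wt\Cont_0(\overline{V},\overline{\xi})$, equivalently $\id \preceq \wt\phi_{\overline{h}}\,\wt\phi_{\overline{g}}^{-1} = \wt\phi_{\overline{k}}$ for a nonnegative contact Hamiltonian $\overline{k}$. Choose an extension $k$ of $\overline{k}$ which is itself nonnegative — this is possible since $\overline{k} \ge 0$ and $Y$ is closed, so one can extend and then take a max with $0$, or multiply by a cutoff, keeping $k|_{[0,1]\times Y} = \rho^*\overline{k}$. Then $\id \preceq \wt\phi_k$ in $\wt\Cont_0(V,\xi)$, so by monotonicity of $\mu$ we get $\mu(\wt\phi_k) \ge 0$, i.e. $\overline{\mu}(\wt\phi_{\overline{k}}) \ge 0$, which combined with the quasi-morphism property and homogeneity (the standard argument that a monotone homogeneous quasi-morphism satisfies $\overline{\mu}(\wt\phi_{\overline{h}}) \ge \overline{\mu}(\wt\phi_{\overline{g}})$ whenever $\id \preceq \wt\phi_{\overline{h}}\wt\phi_{\overline{g}}^{-1}$) yields $\overline{\mu}(\wt\phi_{\overline{g}}) \le \overline{\mu}(\wt\phi_{\overline{h}})$. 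Finally, for the vanishing property: if $\overline{U} \subset \overline{V}$ is open and displaced by some $\overline{\psi} \in \Cont_0(\overline{V},\overline{\xi})$, and $\overline{h}$ is supported in $[0,1]\times\overline{U}$, then its extension $h$ can be chosen supported in $[0,1]\times\rho^{-1}(\overline{U})$; since $\rho^{-1}(\overline{U})$ is displaced in $V$ by a lift of $\overline{\psi}$ (using that the reduction setting lets contactomorphisms of $\overline{V}$ lift near $Y$, or more robustly by a direct displacement argument combined with $Y$-subheaviness), the vanishing property of $\mu$ gives $\mu(\wt\phi_h) = 0$, hence $\overline{\mu}(\wt\phi_{\overline{h}}) = 0$. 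For $C^0$-continuity: given $\overline{h}^{(n)} \to \overline{h}$ in $C^0([0,1]\times\overline{V})$, choose extensions $h^{(n)}, h$ depending continuously on the data (e.g. via a fixed linear extension operator or a partition-of-unity construction) so that $h^{(n)} \to h$ in $C^0([0,1]\times V)$; then $\overline{\mu}(\wt\phi_{\overline{h}^{(n)}}) = \mu(\wt\phi_{h^{(n)}}) \to \mu(\wt\phi_h) = \overline{\mu}(\wt\phi_{\overline{h}})$ by $C^0$-continuity of $\mu$. The one genuine subtlety throughout — and the reason strict coisotropy and $\rho^*\overline{\alpha}=\alpha|_Y$ are hypotheses rather than cosmetic — is ensuring at each step that "an extension of the composition/inverse/power is the composition/inverse/power of extensions," which is where a short lemma about how the assignment $\overline{h} \mapsto$ (a chosen extension) interacts with the groupoid structure \eqref{e:ContactHam}–\eqref{e:ContactPB} on the nose along $Y$ needs to be recorded.
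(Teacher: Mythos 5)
Your outline reproduces much of the actual argument (independence of the choice of extension via the observation that an isotopy of $V$ preserving $Y$ and satisfying $\rho\circ\psi^t|_Y=\rho$ has contact Hamiltonian vanishing on $Y$, hence $\mu=0$ by subheaviness plus monotonicity and homogenization; the quasi-morphism bound; monotonicity via nonnegative extensions; vanishing and $C^0$-continuity via suitably supported, resp.\ $C^0$-convergent, extensions). But there is a genuine gap in the well-definedness step: you only handle the ambiguity in the choice of the extension $h$ for a \emph{fixed} $\ol{h}$, not the ambiguity in $\ol{h}$ itself. The map $\ol{h}\mapsto\wt\phi_{\ol{h}}$ is far from injective: two Hamiltonians $\ol{h},\ol{h}'$ whose isotopies are homotopic with fixed endpoints (equivalently, whose concatenated difference is a contractible loop in $\Cont_0(\ol{V})$) define the same element of $\wt\Cont_0(\ol{V},\ol{\xi})$, yet their extensions $h,h'$ need not define the same element of $\wt\Cont_0(V)$ --- the time-one maps $\phi_h^1$ and $\phi_{h'}^1$ agree only along $Y$ --- and nothing in your proposal shows $\mu(\wt\phi_h)=\mu(\wt\phi_{h'})$. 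Your mechanism applies to $(\phi_{h'}^t)^{-1}\circ\phi_h^t$ only when the two downstairs isotopies coincide for \emph{every} $t$ (the "same $\ol{h}$, different extension" case); when they merely agree up to homotopy rel endpoints, $(\phi_{\ol{h}'}^t)^{-1}\circ\phi_{\ol{h}}^t$ is not the identity at intermediate times, the correction Hamiltonian does not vanish on $Y$, and the argument gives nothing. So what you construct is a homogeneous quasi-morphism on the group of contact isotopies (equivalently, of contact Hamiltonians) of $\ol{V}$, and you have not shown it descends to $\wt\Cont_0(\ol{V})$, which is the actual statement of the theorem.

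The missing ingredient is precisely the paper's loop lemma: if $h$ extends an $\ol{h}$ generating a \emph{contractible loop} in $\Cont_0(\ol{V})$, then $\mu(\wt\phi_h)=0$. This requires an idea beyond your lemmas: lift a null-homotopy $\{\Phi^s_t\}$ of the loop to a two-parameter family of extended Hamiltonians $H^s_t$ on $V$, and concatenate the three boundary paths $\{\Psi^{1-u}_0\},\{\Psi^0_u\},\{\Psi^u_1\}$ of the resulting family to obtain an isotopy of $V$ which preserves $Y$ and satisfies $\rho\circ\psi^u|_Y=\rho$ (because the corresponding downstairs concatenation is constantly the identity); this isotopy has $\mu=0$ by your vanishing-on-$Y$ mechanism and is homotopic with fixed endpoints to $\{\phi_h^t\}$, whence $\mu(\wt\phi_h)=0$. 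Combined with the general fact that a homogeneous quasi-morphism on $\cP\!\Cont_0(\ol{V})$ vanishing on the kernel of $\cP\!\Cont_0(\ol{V})\to\wt\Cont_0(\ol{V})$ descends to the quotient, this gives the true well-definedness. Two smaller points: subheaviness is defined only for \emph{autonomous} Hamiltonians vanishing on $Y$, so for a time-dependent $g$ with $g|_{[0,1]\times Y}=0$ you must sandwich $g$ between autonomous Hamiltonians vanishing on $Y$ and invoke monotonicity (and note that $\wt\phi_{h-h'}$ is not the relevant group element --- your "cleanest route" via $\wt\phi_{h'}^{-1}\wt\phi_h$ is the correct one); and in the vanishing step the extension cannot be supported in $[0,1]\times\rho^{-1}(U)$ itself, only in a displaceable open neighborhood of $\rho^{-1}(U)$ in $V$, whose displaceability one deduces from the extension lemma applied to an isotopy displacing $U$ downstairs.
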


An example of \eqref{e:SetUpART} is given by contact reduction \cite[Theorem 6]{Gei97C} where a compact Lie group 
$G$ acts on $V$ preserving $\alpha$ with moment map $P\fc V \to \mathfrak{g}^{*}$.  In this case $Y = P^{-1}(0)$ is strictly 
coisotropic with respect to $\alpha$ and $\ol{V} = Y/G$ is a contact manifold assuming $G$ acts freely on $Y$.  
When we prove Theorem~\ref{thm_main} in Section~\ref{s:ApplyingART} it will be in the case of contact reduction
for torus actions on $\RP^{2d-1}$.

It should be noted that, considering more general group actions on $\R P^{2d-1}$, it is possible to construct monotone quasi-morphisms with the vanishing property on prequantizations of symplectic manifolds more general than toric ones, however we shall not pursue this direction here.


\subsection{Contact rigidity}\label{s:ContactRigidity}
 
Nondisplaceability phenomena in contact manifolds is one aspect of contact rigidity and it is
much less studied than nondisplaceability in symplectic manifolds by Hamiltonian diffeomorphisms 
\cite{AbrBorMcD12D,AbrMac13R, BirEntPol04C,Cho04H,EntPol06Q,EntPol09R,FOOO09L,McD11D,WilWoo13Q,Woo11G}.
As with the symplectic setting, contact nondisplaceability goes back to a conjecture of Arnold
that for the standard contact structure on the jet space $J^1N = T^{*}N \times \R$ of a closed
manifold $N$, the zero section $\{(q,0,0)\,|\,q\in N\}$ cannot be displaced from the zero wall 
$\{(q,0,z)\,|\,q\in N,\, z\in\R)\}$ by a contact isotopy and this was proved by Chekanov \cite{Che96C} using generating functions.
Using spectral invariants from generating functions \cite{Zap13G} proved contact rigidity for smooth and
singular subsets of the standard contact $T^*N \times S^1$.
Floer theoretic methods have also been used by Eliashberg--Hofer--Salamon \cite{EliHofSal95L} and Ono \cite{Ono96L} to
detect nondisplaceable submanifolds in unit cotangent bundles of closed manifolds and in certain prequantizations. 
Recently sheaf-theoretic methods have been also been playing a role in symplectic and contact rigidity, see for example \cite{Tamarkin_Microlocal_condition_nondisplaceability, Guillermou_Kashiwara_Schapira_Sheaf_quant_Ham_isotopies_appl_nondisplaceability}. 

In the series of papers \cite{BirEntPol04C, EntPol03C, EntPol06Q, EntPol09R} Entov--Polterovich showed how to use 
the machinery of their quasi-morphisms on the universal cover of the Hamiltonian group of a symplectic manifold $(M, \w)$ 
and \emph{quasi-states} in order to study the rigidity of symplectic intersections.  In particular in
\cite{EntPol09R} they showed that there is a hierarchy of rigid subsets in symplectic manifolds
for which they introduced the terminology of \emph{heavy} and \emph{superheavy} subsets.

\subsubsection{Superheavy and subheavy sets for monotone quasi-morphisms on $\wt\Cont_{0}$}

Inspired by Entov--Polterovich's work, in this paper we will show how monotone quasi-morphisms on 
$\wt\Cont_{0}(V)$ can also be used to study the rigidity of intersections in contact manifolds.
In analogy to the terms \emph{heavy} and \emph{superheavy} for subsets of symplectic manifolds, we will also show 
how such monotone quasi-morphisms detect a hierarchy of rigid subsets in contact manifolds,
namely \emph{subheavy} (defined above) and \emph{superheavy} sets:
\begin{defin}\label{d:sh}
	If $\mu\fc \wt\Cont_{0}(V, \xi) \to \R$ is a monotone quasi-morphism, then a closed subset $Y \subset V$ is
	\textbf{$\mu$-superheavy} if
	$$
	\mu(\wt\phi_{h}) > 0
	$$ for all autonomous contact Hamiltonians $h \in C^{\infty}(V)$ such that $h|_{Y} > 0$.
\end{defin}
Given a prequantization $\pi\fc (V, \alpha) \to (M, \w)$, in Section~\ref{s:SymplecticQMQS} we will discuss how superheavy subsets in
the symplectic manifold $(M, \w)$ are related to subheavy and superheavy subsets of the contact manifold $(V, \alpha)$.
The basic properties of superheavy sets in contact manifolds are given by the following proposition.

\begin{prop}\label{p:Basics}
	Let $\mu \fc \wt\Cont_{0}(V,\xi) \to \R$ be a monotone quasi-morphism.
	\begin{enumerate}
	\item The properties $\mu$-superheavy and $\mu$-subheavy are independent of the choice of contact form $\alpha$ for $\xi$ 
	used to link contact Hamiltonians and contact isotopies.
	\item If $Z$ is $\mu$-superheavy ($\mu$-subheavy) and $Z \subset Y$, then $Y$ is $\mu$-superheavy ($\mu$-subheavy).
	\item The property of being $\mu$-subheavy is preserved by elements of $\Cont_{0}(V, \xi)$, and likewise for $\mu$-superheavy.
	\item The entire manifold $V$ is $\mu$-superheavy.
	\end{enumerate}
\end{prop}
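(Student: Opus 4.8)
The plan is to verify each of the four items directly from the definitions of $\mu$-superheavy (Definition~\ref{d:sh}) and $\mu$-subheavy (Definition~\ref{d:subheavy}), using only elementary properties of the contact Hamiltonian correspondence and the fact that $\mu$ is a monotone quasi-morphism.

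\emph{Item (1).}
Suppose $\alpha' = e^f\alpha$ is another contact form for $\xi$, where $f \in C^\infty(V)$. If $h \in C^\infty(V)$ is an autonomous contact Hamiltonian with respect to $\alpha$ and generates the contact isotopy $\{\phi_h^t\}$, then the very same isotopy is generated with respect to $\alpha'$ by the (in general time-dependent) contact Hamiltonian $h'_t := e^f\,(h\circ\phi_h^{-t})$. Wait — I want to be careful: the cleaner statement is that $\alpha'(X_h) = e^f h$, so at $t=0$ the $\alpha'$-Hamiltonian of the flow is $e^f h$, but along the flow it may become time-dependent. However, the condition defining subheavy/superheavy only constrains $h$ through its sign on $Y$ and, for subheavy, through vanishing on $Y$. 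The key observation is that $e^f h$ vanishes on $Y$ iff $h$ does, and $e^f h > 0$ on $Y$ iff $h > 0$ on $Y$, since $e^f > 0$ everywhere. So it suffices to show that the value $\mu(\wt\phi_h)$ does not depend on which contact form we use to interpret a given element $\wt\phi \in \wt\Cont_0(V,\xi)$ — which is automatic, because $\wt\phi_h$ is an element of $\wt\Cont_0(V,\xi)$ independent of $\alpha$, and $\mu$ is a function on that group. The only subtlety is matching up \emph{autonomous} Hamiltonians across the two forms; I would handle this by noting that if $h$ is $\alpha$-autonomous with $h|_Y = 0$ (resp. $h|_Y>0$), then there is an $\alpha'$-autonomous $g$ with $g|_Y = 0$ (resp. $g|_Y>0$) generating the \emph{same} element of $\wt\Cont_0$, or, more simply, reduce to the time-dependent characterization: a closed set $Y$ is $\mu$-subheavy iff $\mu(\wt\phi_h) = 0$ for every time-dependent $h$ with $h_t|_Y \equiv 0$, which is manifestly form-independent. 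The equivalence of the autonomous and time-dependent formulations follows because one can reparametrize and because $\mu$ is homogeneous and conjugation-invariant.

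\emph{Items (2) and (4).}
These are immediate from the definitions. For (2): if $Z \subset Y$ and $h$ is autonomous with $h|_Y = 0$, then $h|_Z = 0$, so $\mu(\wt\phi_h) = 0$ because $Z$ is $\mu$-subheavy; hence $Y$ is $\mu$-subheavy. Dually, if $h|_Y > 0$ then $h|_Z > 0$, so $\mu(\wt\phi_h) > 0$; hence $Y$ is $\mu$-superheavy. For (4): take $Z = V$ in (2), or argue directly — if $h \in C^\infty(V)$ satisfies $h|_V > 0$, then $h > 0$ everywhere, so $c := \min_V h > 0$ and $h \geq c$. Then $\wt\phi_h$ and $\wt\phi_c$ are comparable: writing $h = c + (h-c)$ with $h - c \geq 0$, monotonicity of $\mu$ and bi-invariance of $\preceq$ give $\mu(\wt\phi_h) \geq \mu(\wt\phi_c)$; wait, one must be slightly careful since $\wt\phi_h \ne \wt\phi_c\wt\phi_{h-c}$ in general. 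The correct route: $\id \preceq \wt\phi_{h-c}$ means $\wt\phi_c \preceq \wt\phi_c\wt\phi_{h-c}'$ for the appropriate conjugated Hamiltonian, but the cleanest statement is simply that $c \le h$ implies $\wt\phi_c \preceq \wt\phi_h$ — this is standard for the Eliashberg--Polterovich relation because nonnegativity of $h - c$ (after the usual conjugation trick) gives $\wt\phi_h \wt\phi_c^{-1} \succeq \id$. So $\mu(\wt\phi_h) \ge \mu(\wt\phi_c) = c\,\mu(\wt\phi_1) = c\,\mu_{\mathrm{Reeb}}$, and provided $\mu(\wt\phi_1) > 0$ this is positive; but actually one should not need $\mu(\wt\phi_1)>0$. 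The honest argument for (4): $V$ is $\mu$-superheavy iff $\mu(\wt\phi_h)>0$ whenever $h>0$ on all of $V$; since such $h$ is bounded below by $c>0$, the element $\wt\phi_h$ dominates $\wt\phi_c$, and by homogeneity we may iterate — replacing $h$ by $Nh$ for large $N$ shows $\mu(\wt\phi_{Nh}) = N\mu(\wt\phi_h) \ge Nc\,\mu(\wt\phi_1)$; since $\mu \not\equiv 0$ and $\mu$ is monotone, a nonzero monotone quasi-morphism must satisfy $\mu(\wt\phi_1) \ne 0$, and monotonicity forces $\mu(\wt\phi_1) > 0$ (as $\id \preceq \wt\phi_1$). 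Therefore $\mu(\wt\phi_h) \ge c\,\mu(\wt\phi_1) > 0$. This is the main point to get right.

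\emph{Item (3).}
Let $\psi \in \Cont_0(V,\xi)$ and let $Y$ be $\mu$-subheavy; I want $\psi(Y)$ to be $\mu$-subheavy. Suppose $h \in C^\infty(V)$ is autonomous with $h|_{\psi(Y)} = 0$. Then $h \circ \psi$ is autonomous with $(h\circ\psi)|_Y = 0$, so $\mu(\wt\phi_{h\circ\psi}) = 0$. Now the contact isotopy generated by $h \circ \psi$ is the $\psi$-conjugate of the one generated by $h$ (with respect to the contact form $\psi^*\alpha$, which defines the same $\xi$), so $\wt\phi_{h\circ\psi}$ is conjugate to $\wt\phi_h$ in $\wt\Cont_0(V,\xi)$ — here I use that $\psi$ lies in the identity component, so conjugation by $\psi$ lifts to an inner automorphism of the universal cover, and that by item (1) we may freely switch between $\alpha$ and $\psi^*\alpha$. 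Since homogeneous quasi-morphisms are conjugation-invariant, $\mu(\wt\phi_h) = \mu(\wt\phi_{h\circ\psi}) = 0$, so $\psi(Y)$ is $\mu$-subheavy. The superheavy case is verbatim the same with $=0$ replaced by $>0$.

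\emph{Expected main obstacle.}
The routine-looking items (2) and the monotonicity chain in (4) are the places where one can slip: specifically, the Eliashberg--Polterovich relation $\preceq$ does not interact with the group law as naively as $\mu(\wt\phi_h) \ge \mu(\wt\phi_c) + \mu(\wt\phi_{h-c})$ would suggest, so one must phrase the domination $\wt\phi_c \preceq \wt\phi_h$ correctly (via nonnegativity of the generating Hamiltonian of $\wt\phi_h \wt\phi_c^{-1}$, using bi-invariance) and then deduce $\mu(\wt\phi_1) > 0$ from the hypothesis that $\mu$ is a nonzero \emph{monotone} quasi-morphism. Apart from that, the proof is bookkeeping: the sign conditions are preserved by positive rescaling (item 1), by passing to subsets (item 2), by conjugation (item 3), and the whole space satisfies the positivity condition because constants are detected by the Reeb element (item 4).
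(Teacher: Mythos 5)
Items (2) and (3) of your proposal are correct and essentially the paper's arguments ((3) in the paper is done by computing the $\alpha$-Hamiltonian of the conjugated isotopy as $(f\cdot h)\circ\psi$ with $f>0$, whereas you pass to the form $\psi^*\alpha$, where it is $h\circ\psi$, and invoke (1); same substance). The genuine gap is in item (4), at exactly the point you yourself flag as ``the main point to get right'': after reducing to showing $\mu(\wt\phi_1)>0$, you simply assert that ``a nonzero monotone quasi-morphism must satisfy $\mu(\wt\phi_1)\neq 0$.'' That assertion \emph{is} the content of item (iv) and does not follow formally from ``$\mu\neq 0$ and monotone''; it needs the compactness sandwich the paper uses: if $\mu(\wt\phi_h)=0$ for some autonomous $h>0$ (e.g.\ $h\equiv 1$), then $\mu(\wt\phi_{mh})=\mu(\wt\phi_h^{\,m})=0$ for every integer $m$, and since $V$ is closed every contact Hamiltonian $k\in C^\infty([0,1]\times V)$ satisfies $-mh\leq k\leq mh$ for some $m$, so monotonicity forces $\mu(\wt\phi_k)=0$ for all $k$, i.e.\ $\mu\equiv 0$, contradicting the standing assumption that $\mu$ is non-zero. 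Without this (or an equivalent) argument your chain $\mu(\wt\phi_h)\geq c\,\mu(\wt\phi_1)>0$ is unsupported. (Also, the aside ``take $Z=V$ in (2)'' is vacuous: item (2) only propagates superheaviness to supersets, so it cannot produce the first superheavy set.)

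Item (1) is also muddled, though fixable in one line. If $\alpha'=f\alpha$ with $f>0$, the contact Hamiltonian with respect to $\alpha'$ of a given isotopy is simply $f\cdot h$ pointwise, where $h$ is its $\alpha$-Hamiltonian, because at each time the Hamiltonian is the contraction of the (fixed) generating vector field with the form; in particular, for autonomous $h$ the $\alpha'$-Hamiltonian $f h$ is again autonomous. Your formula $e^f(h\circ\phi_h^{-t})$ and the ensuing worry that the $\alpha'$-Hamiltonian ``may become time-dependent'' are both incorrect, and the escape route you offer instead is not a proof: the claimed equivalence of the autonomous and time-dependent formulations of subheaviness ``because one can reparametrize and because $\mu$ is homogeneous and conjugation-invariant'' does not follow from those facts (to the extent such an equivalence holds, it comes from sandwiching a time-dependent Hamiltonian vanishing on $Y$ between autonomous ones vanishing on $Y$ and using monotonicity). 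The correct and complete argument for (1), which is the paper's, is just: $fh$ is autonomous, has the same sign/vanishing on $Y$ as $h$, and the two generate the same element of $\wt\Cont_0(V,\xi)$, on which $\mu$ is defined independently of any contact form.
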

\noindent
This next theorem and its corollary relates subheavy and superheavy sets with contact rigidity.

\begin{thm}\label{t:SHandsubheavy}
Let $\mu \fc \wt\Cont_{0}(V,\xi) \to \R$ be a monotone quasi-morphism.
\begin{enumerate}
\item All $\mu$-superheavy subsets are $\mu$-subheavy.
\item If $Y$ is $\mu$-superheavy and $Z$ is $\mu$-subheavy, then $Y \cap Z \not= \emptyset$.
\end{enumerate}
\end{thm}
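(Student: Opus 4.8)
The plan is to prove both parts by exploiting the quasi-morphism axiom \eqref{e:Quasimorphism} together with conjugation-invariance and the monotonicity of $\mu$, turning a hypothetical disjointness $Y \cap Z = \emptyset$ into a contradiction with a bounded-commutator-type estimate.

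For part (1), suppose $Y$ is $\mu$-superheavy but not $\mu$-subheavy. Then there is an autonomous $h$ with $h|_Y = 0$ but $\mu(\wt\phi_h) \neq 0$; replacing $h$ by $-h$ if necessary we may assume $\mu(\wt\phi_h) > 0$ (using homogeneity, $\mu(\wt\phi_h^{-1}) = -\mu(\wt\phi_h)$, and that $\wt\phi_h^{-1} = \wt\phi_{\bar h}$ for a contact Hamiltonian $\bar h$ vanishing on $Y$ as well, since $Y$ is invariant under the flow of $X_h$ because $h|_Y = 0$ forces $X_h$ tangent to $Y$). Now I would find a function $g \geq 0$ with $g|_Y > 0$ and $g \geq h$ pointwise — for instance $g = h + c$ with $c$ large, or better, pick $g$ supported near $Y$ with $g|_Y > 0$, large enough that $g \geq h$ on $\{h > 0\}$ — so that $\id \preceq \wt\phi_{g - h}$ (here one must be slightly careful: $g - h$ need not be the "difference" of the time-one maps, so instead I would arrange things via a path, writing $\wt\phi_g \succeq \wt\phi_h$ using the concatenation-of-Hamiltonians characterization of $\preceq$). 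Monotonicity then gives $\mu(\wt\phi_g) \geq \mu(\wt\phi_h) > 0$; but iterating — considering $kg$ — and using superheaviness of $Y$ against $g|_Y > 0$ should be consistent, so the contradiction must come from the other direction: take $g \leq 0$ with $-g$ vanishing on $Y$, or compare $\mu(\wt\phi_h)$ against $\mu$ of a negative Hamiltonian. The cleanest route is: since $h|_Y = 0$, for any $\varepsilon > 0$ the Hamiltonian $h - \varepsilon$ is $\leq 0$ near... no. The actual argument should be: $h$ vanishing on $Y$ means $h$ can be squeezed between $\pm$(small perturbations positive on $Y$), so superheaviness pins $\mu(\wt\phi_h)$ between two quantities forcing it to be $0$. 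I will make this precise by choosing, for each $\delta > 0$, functions $g_\delta^{\pm}$ with $g_\delta^- \leq h \leq g_\delta^+$, each $g_\delta^{\pm}$ having $|g_\delta^{\pm}|$ small on $Y$ but sign-definite, applying monotonicity, and using superheaviness (and its negative via $-Y$... rather via $\mu(\wt\phi_{-g})$) to sandwich.

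For part (2), suppose $Y$ is $\mu$-superheavy, $Z$ is $\mu$-subheavy, and $Y \cap Z = \emptyset$. Since $V$ is closed and both are closed subsets, there are disjoint open neighborhoods $U \supset Y$ and $W \supset Z$. Choose an autonomous $h \geq 0$ supported in $U$ with $h|_Y > 0$ and, simultaneously, $h|_Z = 0$ (possible since $Z \subset W$ is disjoint from $\supp h \subset U$). Superheaviness of $Y$ gives $\mu(\wt\phi_h) > 0$, while subheaviness of $Z$ gives $\mu(\wt\phi_h) = 0$ — immediate contradiction. This part is short; the only thing to verify is that a single $h$ can meet both constraints, which follows from $\overline{U} \cap Z = \emptyset$ and a bump function.

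The main obstacle is part (1): making rigorous the comparison between $\wt\phi_h$ and $\wt\phi_g$ when $g$ dominates $h$ pointwise, since $\preceq$ is defined via nonnegativity of a generating Hamiltonian of the \emph{product path}, not via pointwise inequality of the two Hamiltonians. The standard fix is the observation (essentially in Eliashberg--Polterovich \cite{EliPol00P}) that if $g \geq h$ everywhere then the path $t \mapsto \wt\phi_{h}^{t}$ followed suitably compares to $\wt\phi_g^t$; concretely, $\wt\phi_g \wt\phi_h^{-1}$ is generated by a Hamiltonian conjugate to $g - h \geq 0$, hence $\id \preceq \wt\phi_g \wt\phi_h^{-1}$, i.e. $\wt\phi_h \preceq \wt\phi_g$. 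Once this is in hand, monotonicity plus a symmetric lower bound using $-h$ and homogeneity closes the argument. I expect parts (3) and (4) of Proposition~\ref{p:Basics} (invariance under $\Cont_0$, and $V$ itself superheavy) to be used implicitly or to be provable by the same conjugation-invariance bookkeeping.
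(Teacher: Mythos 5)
Your part (2) is correct and is exactly the paper's argument: disjoint closed sets admit an autonomous $h$ with $h|_Y>0$, $h|_Z=0$, and superheaviness versus subheaviness gives an immediate contradiction.

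Part (1), however, has a genuine gap, and you essentially acknowledge it yourself: your final plan is to squeeze $h$ (with $h|_Y=0$) between functions $g_\delta^-\leq h\leq g_\delta^+$ that are sign-definite but small on $Y$, and then "use superheaviness to sandwich". But superheaviness only gives the \emph{sign} of $\mu(\wt\phi_{g_\delta^{\pm}})$, never any magnitude bound tending to $0$ with $\delta$; monotonicity then yields $\mu(\wt\phi_{g_\delta^-})\leq\mu(\wt\phi_h)\leq\mu(\wt\phi_{g_\delta^+})$ with the left side negative and the right side positive, which pins down nothing. There is no continuity or stability estimate available to close this: $\mu$ is only assumed monotone (not $C^0$-continuous), and in the contact setting one cannot trade a constant shift $h\mapsto h+\delta$ for $\delta\,\mu(\wt\phi_1)$, because the constant Hamiltonian (Reeb flow) does not commute with a general $h$ -- this is precisely the difficulty the paper's proof is designed to circumvent. (A smaller point: your parenthetical claim that $h|_Y=0$ forces $X_h$ tangent to $Y$ is false for a general closed subset $Y$; it is also unnecessary, since for autonomous $h$ one simply has $\wt\phi_h^{-1}=\wt\phi_{-h}$.)

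The missing idea is the following composition trick. Fix $\epsilon>0$ and an integer $m$, and consider the path $t\mapsto \phi^t_{mh}\circ\phi^t_{m\epsilon}$, i.e. the product with a slow Reeb flow. Its generating contact Hamiltonian is $g_t=mh+m\epsilon\,(k_t\circ(\phi^t_{mh})^{-1})$, where $k_t>0$ is the conformal factor of $\phi^t_{mh}$; since $h|_Y=0$, one gets $g_t|_Y>\delta>0$ uniformly in $t$ (no invariance of $Y$ needed). Superheaviness of $Y$ together with monotonicity then gives $\mu(\wt\phi_{mh}\wt\phi_{m\epsilon})>0$, and the quasi-morphism inequality \eqref{e:Quasimorphism} yields $m\,\mu(\wt\phi_h)=\mu(\wt\phi_{mh})\geq \mu(\wt\phi_{mh}\wt\phi_{m\epsilon})+\mu(\wt\phi_{m\epsilon}^{-1})-D>-m\epsilon\,\mu(\wt\phi_1)-D$. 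Dividing by $m$, letting $m\to\infty$ and then $\epsilon\to 0$ gives $\mu(\wt\phi_h)\geq 0$; the reverse inequality follows symmetrically from the characterization of superheaviness via negative Hamiltonians (Lemma~\ref{l:LessThanZero}). It is this combination of homogenization in $m$, composition with the Reeb flow to make the Hamiltonian positive on $Y$, and absorption of the defect $D$ that your sketch does not supply, and without it the sandwich argument does not prove $\mu(\wt\phi_h)=0$.
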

\noindent
As an immediate corollary of Proposition~\ref{p:Basics}(iii) and Theorem~\ref{t:SHandsubheavy} we have:
\begin{coroll}\label{c:MainPoint}
	If $Y \subset V$ is $\mu$-subheavy and $Z \subset V$ is $\mu$-superheavy for
	a monotone quasi-morphism $\mu \fc \wt\Cont_{0}(V,\xi) \to \R$, then the following holds:
	\begin{enumerate}
	\item $Y$ cannot be displaced from $Z$, that is $\psi(Y) \cap Z \not= \emptyset$ for all $\psi \in \Cont_{0}(Y)$.
	\item $Z$ is nondisplaceable, that is $\psi(Z) \cap Z \not= \emptyset$ for all $\psi \in \Cont_{0}(Y)$. \qed
	\end{enumerate}
\end{coroll}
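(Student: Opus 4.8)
The plan is to derive both statements directly from Theorem~\ref{t:SHandsubheavy}(ii) together with Proposition~\ref{p:Basics}(iii), by observing that the property of being $\mu$-subheavy (resp.\ $\mu$-superheavy) is carried along by the action of $\Cont_0(V,\xi)$. For statement (i): suppose $Y$ is $\mu$-subheavy and $Z$ is $\mu$-superheavy, and let $\psi \in \Cont_0(V,\xi)$. By Proposition~\ref{p:Basics}(iii) the set $\psi(Y)$ is again $\mu$-subheavy. Now apply Theorem~\ref{t:SHandsubheavy}(ii) to the $\mu$-superheavy set $Z$ and the $\mu$-subheavy set $\psi(Y)$ to conclude $Z \cap \psi(Y) \neq \emptyset$, which is precisely the claim $\psi(Y) \cap Z \neq \emptyset$.

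For statement (ii), take $Y = Z$: by Theorem~\ref{t:SHandsubheavy}(i) every $\mu$-superheavy set is $\mu$-subheavy, so $Z$ is simultaneously $\mu$-superheavy and $\mu$-subheavy, and the previous paragraph applied with this choice gives $\psi(Z) \cap Z \neq \emptyset$ for all $\psi \in \Cont_0(V,\xi)$.

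There is essentially no obstacle here; the corollary is a formal consequence of the two cited results, and indeed the text already flags it as ``an immediate corollary.'' The only point that warrants a word of care is the compatibility of hypotheses: Theorem~\ref{t:SHandsubheavy}(ii) is stated for a subheavy set and a superheavy set sitting inside the \emph{same} contact manifold $(V,\xi)$ with respect to the \emph{same} quasi-morphism $\mu$, and one should check that translating $Y$ by an element of $\Cont_0(V,\xi)$ keeps us in exactly that situation --- which is guaranteed by Proposition~\ref{p:Basics}(iii), since that statement asserts invariance of both properties under $\Cont_0(V,\xi)$ for the fixed $\mu$. (One may also note in passing that the displaceability notion in the corollary uses $\psi \in \Cont_0(Y)$ as written, but the intended meaning, consistent with the definition of displaceable sets in Section~\ref{section_qms}, is $\psi \in \Cont_0(V,\xi)$.) No choice of contact form enters, in accordance with Proposition~\ref{p:Basics}(i). \qed
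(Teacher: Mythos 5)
Your proof is correct and is exactly the argument the paper intends: the corollary is stated as an immediate consequence of Proposition~\ref{p:Basics}(iii) and Theorem~\ref{t:SHandsubheavy}, and your deduction (translate $Y$ by $\psi$, invoke the intersection property, and for (ii) use that superheavy implies subheavy) is that argument spelled out; your remark that $\Cont_0(Y)$ in the statement should read $\Cont_0(V,\xi)$ is also right.
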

\noindent
See Section~\ref{s:RigidityProof} for the proofs of Proposition~\ref{p:Basics} and Theorem~\ref{t:SHandsubheavy},
which together with Corollary~\ref{c:MainPoint} are analogous to the basic properties of heavy and superheavy subsets of a symplectic manifold \cite[Section 1.4]{EntPol09R}.
We also have the following criterion for when a $\mu$-subheavy set is automatically $\mu$-superheavy, which we prove
in Section~\ref{s:RigidityProof}.
\begin{prop}\label{p:subheavyReebImpliesSuperheavy}
	Let $\mu\fc \wt\Cont_{0}(V, \xi) \to \R$ be a monotone quasi-morphism
	and $Y \subset V$ be a $\mu$-subheavy subset.  If $Y$ is preserved by the flow of some positive contact vector field,
	then $Y$ is $\mu$-superheavy.
\end{prop}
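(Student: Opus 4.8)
The plan is this. Fix a contact form $\alpha$ and let $f>0$ be a contact Hamiltonian whose flow $\{\phi^t_f\}$ preserves $Y$. Given an autonomous contact Hamiltonian $h$ with $h|_Y>0$, I will construct an autonomous contact Hamiltonian $A\le h$ with $\mu(\wt\phi_A)>0$; then $\wt\phi_A\preceq\wt\phi_h$, so monotonicity gives $\mu(\wt\phi_h)\ge\mu(\wt\phi_A)>0$, which is exactly what superheaviness of $Y$ requires. The Hamiltonian will have the form $A=cf-g$, where $c>0$ is a small constant and $g\ge0$ is an auxiliary function supported away from $Y$: the point is that $\mu(\wt\phi_{cf})$ has an explicit positive lower bound, while subtracting $g$ costs nothing since $g$ is invisible to $\mu$.

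First I would record two facts. \emph{(a) Positive Hamiltonians have positive $\mu$.} The Reeb flow gives a one-parameter subgroup $s\mapsto\wt\phi_s$ (constant Hamiltonian $s$), so $s\mapsto\mu(\wt\phi_s)$ is a homomorphism $\R\to\R$, monotone, hence of the form $s\mapsto s\,\mu(\wt\phi_1)$. If $\mu(\wt\phi_1)$ were $0$, then writing an arbitrary $\wt\psi\in\wt\Cont_0$ as $\wt\phi_k$ with $|k|\le C$ and using $\wt\phi_{-C}\preceq\wt\phi_k\preceq\wt\phi_C$ would force $\mu\equiv0$, against the standing convention that quasi-morphisms are non-zero; so $\mu(\wt\phi_1)>0$, and for any autonomous $k$ with $\delta:=\min_V k>0$ one gets $\mu(\wt\phi_k)\ge\mu(\wt\phi_\delta)=\delta\mu(\wt\phi_1)>0$. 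In particular $\mu(\wt\phi_{cf})>0$ for every $c>0$. \emph{(b) Hamiltonians supported away from $Y$ are invisible.} If $B$ is any, possibly time-dependent, contact Hamiltonian with $\supp B\subset[0,1]\times K$ for a compact $K\subset V\setminus Y$, pick a smooth $\chi\fc V\to[0,1]$ equal to $1$ on $K$ and to $0$ near $Y$; then $g_\pm:=\pm\|B\|_{C^0}\chi$ are autonomous with $g_\pm|_Y=0$ and $g_-\le B_t\le g_+$ for all $t$, so $0=\mu(\wt\phi_{g_-})\le\mu(\wt\phi_B)\le\mu(\wt\phi_{g_+})=0$ by $\mu$-subheaviness of $Y$ and monotonicity.

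Now the construction. Given $h$ with $h|_Y>0$, choose $c>0$ with $c<(\min_Y h)/(\max_Y f)$, so $cf<h$ on $Y$ and, by continuity, on a neighbourhood $\mathcal N$ of $Y$; choose an autonomous $g\ge0$ vanishing on a smaller neighbourhood $\mathcal N'$ of $Y$ with $g\ge cf-h$ on all of $V$ (on $\mathcal N'$ one has $cf-h<0\le g$, and outside $\mathcal N'$ one simply makes $g$ large). Set $A:=cf-g$, so $A\le h$ and hence $\wt\phi_A\preceq\wt\phi_h$. To bound $\mu(\wt\phi_A)$, fix $N\in\N$ and write, using that $A$ and $cf$ are autonomous, $\wt\phi_A^N=\wt\phi_{NA}=\wt\phi_{Ncf}\cdot\bigl(\wt\phi_{Ncf}^{-1}\wt\phi_{NA}\bigr)$. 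By the standard difference formula $\wt\phi_{Ncf}^{-1}\wt\phi_{NA}$ is generated by the time-dependent Hamiltonian $x\mapsto-e^{-v_t(x)}\,Ng\bigl(\phi^t_{Ncf}(x)\bigr)$, where $e^{v_t}$ is the conformal factor of $\phi^t_{Ncf}$. Since $\phi^t_{Ncf}=\phi^{Nct}_f$ preserves $Y$, the tube lemma on $[0,Nc]$ produces a neighbourhood $W$ of $Y$ with $\phi^t_{Ncf}(W)\subset\mathcal N'$ for all $t\in[0,1]$, so this Hamiltonian vanishes on $[0,1]\times W$; by fact (b) with $K=V\setminus W$ we get $\mu\bigl(\wt\phi_{Ncf}^{-1}\wt\phi_{NA}\bigr)=0$. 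Combining the quasi-morphism inequality (defect $D$), homogeneity, and fact (a):
\[
N\,\mu(\wt\phi_A)=\mu(\wt\phi_A^N)\ge\mu(\wt\phi_{Ncf})-D=N\,\mu(\wt\phi_{cf})-D .
\]
Dividing by $N$ and letting $N\to\infty$ gives $\mu(\wt\phi_A)\ge\mu(\wt\phi_{cf})>0$, whence $\mu(\wt\phi_h)\ge\mu(\wt\phi_A)>0$ and $Y$ is $\mu$-superheavy.

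I expect the main obstacle to be the time-dependence of the Hamiltonian appearing in the decomposition $\wt\phi_A^N=\wt\phi_{Ncf}\cdot(\wt\phi_{Ncf}^{-1}\wt\phi_{NA})$: $\mu$-subheaviness is stated only for autonomous Hamiltonians vanishing on $Y$, so it cannot be invoked directly. The resolution is to force $g$ to vanish on an entire neighbourhood of $Y$ rather than merely on $Y$ --- which is compatible with $A\le h$ precisely because $cf<h$ near $Y$ --- and this is exactly where the hypothesis that the flow of $f$ preserves $Y$ is used, through the tube-lemma step and fact (b). Everything else is routine: the difference formula for $\wt\phi_a^{-1}\wt\phi_b$, the reparametrisation identity $\wt\phi_A^N=\wt\phi_{NA}$ for autonomous $A$, and the $C^0$-sandwich in fact (b).
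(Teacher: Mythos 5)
Your proof is correct, and its engine is the same one the paper uses: compare powers of the given Hamiltonian with powers of the positive Hamiltonian whose flow preserves $Y$, neutralize the correction term via subheaviness and monotonicity, and finish with the defect-plus-homogeneity limit as the power goes to infinity. The bookkeeping, however, is genuinely different. The paper first replaces the contact form so that the given positive contact field becomes the Reeb field, takes a constant $c>0$ with $h|_Y\geq c$, and examines $\wt\phi_{-mc}\wt\phi_{mh}$, whose generating Hamiltonian $m(-c+h\circ\phi^t_{mc})$ is nonnegative on $Y$ because the Reeb flow preserves $Y$; subheaviness and monotonicity then give $\mu\geq 0$ for this element, and the defect inequality yields $\mu(\wt\phi_h)\geq\mu(\wt\phi_c)>0$ by Proposition~\ref{p:Basics}(iv). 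You instead keep the original form, shrink $h$ to an autonomous minorant $A=cf-g$ that agrees with $cf$ on a neighborhood of $Y$, and use the tube lemma to see that the correction $\wt\phi_{Ncf}^{-1}\wt\phi_{NA}$ is generated by a time-dependent Hamiltonian supported away from $Y$, hence exactly $\mu$-null by your $C^0$-sandwich between autonomous Hamiltonians vanishing on $Y$. What each buys: the paper's route is shorter (no auxiliary $A$, no tube lemma, and it cites Proposition~\ref{p:Basics}(iv) where your fact (a) re-derives it), while your route is a bit more careful exactly where subheaviness is invoked --- the paper's step ``$g_t|_Y\geq 0$ implies $\mu(\wt\phi_g)\geq 0$'' still needs an autonomous minorant vanishing on $Y$ below a time-dependent Hamiltonian, which requires a small margin (e.g.\ taking $c$ strictly below $\min_Y h$) since merely nonnegative-on-$Y$ data need not admit such a smooth minorant for an arbitrary closed set $Y$; your correction vanishes on a whole neighborhood of $Y$, so the sandwich is immediate. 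Both arguments otherwise rest on the same standard facts (pointwise inequality of Hamiltonians gives $\preceq$, the composition/difference formula for contact Hamiltonians, and $\wt\phi_{NA}=\wt\phi_A^N$ for autonomous $A$).
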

\noindent
In the context of Theorem~\ref{thm_ART}, note that Proposition~\ref{p:subheavyReebImpliesSuperheavy} implies
that the $\mu$-subheavy subset $Y \subset V$, which is strictly coisotropic, is actually $\mu$-superheavy.

As the next theorem shows, the properties of being subheavy and superheavy are respected by
the reduction of the quasi-morphisms in Theorem~\ref{thm_ART}.
Recall in Theorem~\ref{thm_ART} one has contact manifolds $(V, \xi, \alpha)$ and $(\ol{V}, \ol{\xi}, \ol{\alpha})$ 
and a closed submanifold $Y \subset V$ with a fiber bundle $\rho\fc Y \to \ol{V}$.
There are monotone quasi-morphisms
\begin{equation}\label{e:QuasimorphismsFromART}
	\mu\fc \wt\Cont_{0}(V, \xi) \to \R \quad\mbox{and}\quad \ol{\mu}\fc \wt{\Cont}_{0}(\ol{V}, \ol{\xi}) \to \R
\end{equation}
where by definition $\ol{\mu}(\wt\phi_{\ol{h}}) := \mu(\wt\phi_{h})$ for $h \in C^{\infty}([0,1] \times V)$ being any contact
Hamiltonian that satisfies $\rho^{*}\ol{h} = h|_{[0,1] \times Y}$.  
\begin{thm}\label{thm_rigidity_descends_ART}
	For monotone, $C^0$-continuous quasi-morphisms \eqref{e:QuasimorphismsFromART} from Theorem~\ref{thm_ART},
	if $Z \subset V$ is $\mu$-subheavy, then
	$\rho(Y \cap Z) \subset \overline{V}$ is $\overline{\mu}$-subheavy and likewise for
	superheavy sets.
\end{thm}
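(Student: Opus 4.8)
The plan is to prove the statement for subheavy sets and then indicate the parallel argument for superheavy sets. Throughout I use two outputs of Theorem~\ref{thm_ART}: the defining identity $\overline\mu(\wt\phi_{\overline h}) = \mu(\wt\phi_h)$ for every contact Hamiltonian $h$ on $V$ with $h|_{[0,1]\times Y} = \rho^{*}\overline h$, and the $C^{0}$-continuity of $\overline\mu$. Since $V$ is closed, $Y$ and $Z$ are compact, so $Y\cap Z$ and $\rho(Y\cap Z)$ are compact, hence closed; moreover $Y\cap Z\neq\emptyset$ because $Y$ is $\mu$-superheavy by Proposition~\ref{p:subheavyReebImpliesSuperheavy} while $Z$ is $\mu$-subheavy, so Theorem~\ref{t:SHandsubheavy}(ii) applies. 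Thus $\rho(Y\cap Z)$ is a legitimate candidate for an $\overline\mu$-subheavy set.

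So let $\overline g\in C^{\infty}(\overline V)$ be autonomous with $\overline g|_{\rho(Y\cap Z)} = 0$; the goal is $\overline\mu(\wt\phi_{\overline g}) = 0$. First I would use $C^{0}$-continuity of $\overline\mu$ to reduce to the case where $\overline g$ vanishes on an \emph{open neighborhood} $\overline N$ of $\rho(Y\cap Z)$: cutting $\overline g$ off against functions equal to $1$ near the compact set $\rho(Y\cap Z)$ and supported in shrinking neighborhoods of it gives autonomous $\overline g_{n}\to\overline g$ in $C^{0}$ with each $\overline g_{n}$ vanishing near $\rho(Y\cap Z)$, so $\overline\mu(\wt\phi_{\overline g}) = \lim_{n}\overline\mu(\wt\phi_{\overline g_{n}})$. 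Assuming now $\overline g\equiv 0$ on $\overline N$, set $N := \rho^{-1}(\overline N)$, an open neighborhood of $Y\cap Z$ in $Y$ on which $\rho^{*}\overline g$ vanishes.

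The heart of the proof is to build an autonomous contact Hamiltonian $g$ on $V$ with $g|_{Y} = \rho^{*}\overline g$ \emph{and} $g|_{Z} = 0$; granting this, $Z$ being $\mu$-subheavy gives $\mu(\wt\phi_{g}) = 0$, hence $\overline\mu(\wt\phi_{\overline g}) = \mu(\wt\phi_{g}) = 0$. To construct $g$, fix a tubular neighborhood $W$ of $Y$ with projection $p\colon W\to Y$, let $\beta\colon V\to[0,1]$ be smooth, $\equiv 1$ near $Y$ and compactly supported in $W$, and set $g := \beta\cdot(\overline g\circ\rho\circ p)$ on $W$ and $g := 0$ on $V\setminus W$; then $g|_{Y} = \rho^{*}\overline g$ automatically. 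What makes $g$ vanish on $Z$ is that $Z$ is \emph{closed} and disjoint from the compact set $Y\setminus N$ (as $Z\cap(Y\setminus N) = (Z\cap Y)\setminus N = \emptyset$), so $\dist(Z,\,Y\setminus N) > 0$. Shrinking $W$ so that each of its points lies within this distance of its $p$-image, every $x\in Z\cap W$ has $p(x)\in N$, where $\overline g\circ\rho$ vanishes; thus $g\equiv 0$ on $Z$, as required.

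The superheavy statement is proved the same way with inequalities reversed. Given autonomous $\overline g$ with $\overline g|_{\rho(Y\cap Z)} > 0$, compactness of $\rho(Y\cap Z)$ gives $\overline g\ge c>0$ on an open $\overline N\supseteq\rho(Y\cap Z)$ directly (no $C^{0}$-approximation needed), and with $N := \rho^{-1}(\overline N)$ one builds an autonomous $g$ on $V$ with $g|_{Y} = \rho^{*}\overline g$ and $g|_{Z} > 0$ — for instance $g := \beta(\overline g\circ\rho\circ p) + (1-\beta)K$ on $W$ and $g := K$ on $V\setminus W$, with $K$ a large positive constant and $W$ shrunk as above so that $\overline g\circ\rho\circ p\ge c$ on $Z\cap W$ — whence $Z$ being $\mu$-superheavy yields $\overline\mu(\wt\phi_{\overline g}) = \mu(\wt\phi_{g}) > 0$. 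I expect the Hamiltonian extension step to be the main obstacle: one must prescribe $g$ on the submanifold $Y$ while controlling it on the a priori unrelated closed set $Z$, and it is precisely this tension that forces the preliminary $C^{0}$-reduction — needed to make $\rho^{*}\overline g$ vanish (resp. stay positive) on a whole $Y$-neighborhood of $Y\cap Z$, not just on $Y\cap Z$ — together with the careful sizing of the tubular neighborhood. Everything else is furnished directly by Theorem~\ref{thm_ART}.
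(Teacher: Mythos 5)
Your proposal is correct and follows essentially the same route as the paper: reduce the subheavy case via $C^0$-continuity of $\ol{\mu}$ to Hamiltonians vanishing on a neighborhood of $\rho(Y\cap Z)$, then produce an extension $g$ with $g|_Y=\rho^*\ol{g}$ and $g|_Z=0$ (resp.\ $g|_Z>0$), and conclude from $Z$ being $\mu$-subheavy (resp.\ $\mu$-superheavy) together with the defining identity $\ol{\mu}(\wt\phi_{\ol{g}})=\mu(\wt\phi_g)$. The only deviations are cosmetic: you spell out the tubular-neighborhood/cutoff construction of the extension, which the paper leaves implicit, and in the superheavy case you extend $\ol{g}$ itself with a large constant off a neighborhood of $Y$ instead of the paper's device of a constant minorant $\ol{f}=\epsilon$ followed by monotonicity of $\ol{\mu}$.
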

\begin{proof}
	First note that since $Y$ is $\mu$-superheavy by Proposition~\ref{p:subheavyReebImpliesSuperheavy}
	there is a non-trivial intersection $Y \cap Z \not= \emptyset$ by Theorem~\ref{t:SHandsubheavy}
	if $Z$ is $\mu$-subheavy.
	
	Assume $Z \subset V$ is $\mu$-superheavy and let $\ol{h} \in C^{\infty}(\ol{V})$ be such that 
	$\ol{h}|_{\rho(Y \cap Z)} > 0$.  For $\epsilon > 0$ sufficiently small, 
	let $\ol{f} \in C^{\infty}(\ol{V})$ be such that $\ol{f} = \epsilon$ in
	a neighborhood of $\rho(Y \cap Z)$ and $\ol{h} \geq \ol{f}$.
	Now we can pick an extension $f \in C^{\infty}(V)$ so that $f|_{Z} = \epsilon$ and $\rho^{*}\ol{f} = f|_{Y}$.
	Since $Z$ is $\mu$-superheavy it follows that $\ol{\mu}(\wt\phi_{\ol{f}}) = \mu(\wt\phi_{f}) > 0$,
	and hence by monotonicity $\ol{\mu}(\wt\phi_{\ol{h}}) > 0$.  Therefore
	$\rho(Y \cap Z) \subset \ol{V}$ is $\ol{\mu}$-superheavy. 
	
	Assume $Z \subset V$ is $\mu$-subheavy and let $\ol{h} \in C^{\infty}(\ol{V})$ be such that 
	$\ol{h}|_{\rho(Y \cap Z)} = 0$.  Pick a sequence $\ol{f}_{n} \in C^{\infty}(\ol{V})$
	such that there is $C^0$-convergence $\ol{f}_{n} \to \ol{h}$ and there are neighborhoods
	$\cN_{n}$ of $\rho(Y \cap Z)$ such that $\ol{f}_{n}|_{\cN_{n}} = 0$.
	We can pick extensions $f_{n} \in C^{\infty}(V)$ so that
	$f_{n}|_{Z} = 0$ and $\rho^{*}\ol{f}_{n} = f_{n}|_{Y}$.
	Since $Z$ is $\mu$-subheavy it follows that $\ol{\mu}(\wt\phi_{\ol{f}_{n}}) := \mu(\wt\phi_{f_{n}}) = 0$
	and hence $\ol{\mu}(\wt\phi_h) = 0$ since $\ol{\mu}$ is $C^0$-continuous.
	Therefore $\rho(Y \cap Z) \subset \ol{V}$ is $\ol{\mu}$-subheavy.
\end{proof}

\begin{rem}
We did not use the assumption of $C^0$-continuity of $\mu$ to prove that superheaviness descends under reduction.
Also the descent for subheaviness holds without the $C^0$-continuity assumption if $Z$ intersects $Y$
sufficiently nicely, for instance if there is a small tubular neighborhood $\pr \fc U \to Y$ of $Y$ such that $\pr|_{U \cap Z} \fc U \cap Z \to Y \cap Z$ is a fiber bundle.  However in general it is not possible to find a smooth extension $h$ of $\rho^*\ol h$ with $h|_Z = 0$, which we get around by using the $C^0$-continuity assumption.
\end{rem}

\begin{rem}\label{r:rigidSetsClosed}
We only consider closed subsets in the hierarchy of subheavy and superheavy subsets, and for instance we use this assumption in our proof of Theorem \ref{t:SHandsubheavy} and Proposition~\ref{p:subheavyReebImpliesSuperheavy}. Of course it is possible to extend the definitions and the theorems to arbitrary subsets via closure, but we have suppressed this for the sake of exposition.
\end{rem}

\subsubsection{Rigid Legendrians and pre-Lagrangians}

As demonstrated by previous work in contact rigidity \cite{Eli91N, EliHofSal95L,Ono96L,EliPol00P} two 
important classes of submanifolds in contact manifolds are
Legendrians and pre-Lagrangians.  Recall \cite[Section 2.2]{EliHofSal95L} that a 
\textbf{pre-Lagrangian} submanifold $Y^{n+1} \subset (V^{2n+1}, \xi)$ is one such that
$Y$ is transverse to $\xi$ and there is a contact form $\alpha$ such that $d\alpha|_{Y} = 0$,
that is $Y$ is a strictly coisotropic submanifold of minimal dimension.
An equivalent definition from \cite[Proposition 2.2.2]{EliHofSal95L} is that
$Y$ is the diffeomorphic image of a Lagrangian under the projection $SV \to V$ where $SV$ is
the symplectization of $V$. 
A nice class of examples is as follows: for a prequantization $\pi\fc (V, \alpha) \to (M, \w)$ and a Lagrangian 
$L \subset M$, the submanifold $\pi^{-1}(L) \subset V$ is pre-Lagrangian.
Note that Proposition~\ref{p:subheavyReebImpliesSuperheavy} implies every closed subheavy pre-Lagrangian submanifold is superheavy.  

As we will see from our examples of subheavy and superheavy subsets of contact manifolds
in Section~\ref{s:RigidityExamples}, prototypically a subheavy submanifold is a Legendrian and a superheavy submanifold is
a pre-Lagrangian.  In particular in Corollary~\ref{c:RealPartContact} we explicitly identify a $\mu$-subheavy Legendrian submanifold and a 
$\mu$-superheavy pre-Lagrangian torus for each of the quasi-morphisms in Theorem~\ref{thm_main}.  
For the case of Givental's quasi-morphism on $\RP^{2d-1}$, a $\mu_{\Giv}$-subheavy Legendrian is
$$
	\RP^{d-1}_L:= \{[z] \in \RP^{2d-1} \mid z \in \R^{d}\}
$$
and a $\mu_{\Giv}$-superheavy pre-Lagrangian torus is
$$
	T_{\RP}:= \{[z] \in \RP^{2d-1} \mid \abs{z_{1}}^{2} = \dots = \abs{z_{d}}^{2} = 1/\pi\},
$$
where we are viewing $\RP^{2d-1}$ as the quotient of the sphere $S^{2d-1} \subset \C^{d}$ with radius 
$\sqrt{d/\pi}$.  See Lemmas~\ref{exam_Legendrian_RP_subheavy} and \ref{exam_torus_RP_superheavy} for the proofs.

More generally we have the following existence theorem for nondisplaceable pre-Lag\-ran\-gi\-an tori, analogous
to Entov--Polterovich's proof \cite[Theorem 2.1]{EntPol06Q} of the existence of nondisplaceable Lagrangians in closed toric symplectic manifolds.
\begin{thm}\label{t:fiberSH}
	If $\mu\fc \wt\Cont_{0}(V, \xi) \to \R$ is a monotone quasi-morphism with the vanishing property
	and $(V, \alpha)$ is a prequantization of a closed toric manifold $(M, \w)$, then $V$ contains a nondisplaceable pre-Lagrangian torus.
\end{thm}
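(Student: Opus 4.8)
The plan is to adapt Entov--Polterovich's proof of \cite[Theorem 2.1]{EntPol06Q}. Let $\Phi\fc M\to\Delta\subset\mft^*$ be the moment map and, for $g\in C^\infty(\Delta)$, write $\wt g:=g\circ\Phi\circ\pi\in C^\infty(V)$ for its pullback, which is Reeb-invariant since it is constant along the fibers of $\pi$. Because functions pulled back from $\Delta$ Poisson-commute on $M$, and because the contact Hamiltonian vector field of a Reeb-invariant function projects to the corresponding symplectic Hamiltonian vector field on $M$, one gets $\{\wt{g_1},\wt{g_2}\}_\alpha=0$ for all $g_1,g_2$; hence $g\mapsto\wt\phi_{\wt g}$ is a group homomorphism from $(C^\infty(\Delta),+)$ into an abelian subgroup of $\wt\Cont_0(V,\xi)$. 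Since $\mu$ restricts to a homomorphism on abelian subgroups and is homogeneous, and since it is monotone, the functional
$$
\zeta\fc C^\infty(\Delta)\to\R,\qquad \zeta(g):=\mu(\wt\phi_{\wt g})
$$
is linear, positive ($g\ge 0\Rightarrow\zeta(g)\ge 0$), and hence $C^0$-continuous with $|\zeta(g)|\le\zeta(1)\,\|g\|_{C^0}$. Moreover $\zeta(1)=\mu(\wt\phi_1)>0$ because the whole manifold $V$ is $\mu$-superheavy by Proposition~\ref{p:Basics}(iv). By the Riesz representation theorem there is a regular Borel measure $\nu$ on $\Delta$ with $\zeta(g)=\int_\Delta g\,d\nu$ and $\nu(\Delta)=\zeta(1)>0$, so $\supp\nu\neq\emptyset$.

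The main point is that $\supp\nu$ meets $\Int\Delta$ and that every fiber over $\supp\nu$ is nondisplaceable. For the second claim, suppose $\pi^{-1}(\Phi^{-1}(x))$ were displaceable in $V$ for some $x\in\Delta$; by compactness there is an open $W\ni x$ in $\Delta$ with $\pi^{-1}(\Phi^{-1}(W))$ displaceable, so the vanishing property gives $\mu(\wt\phi_h)=0$ for every contact Hamiltonian supported in $\pi^{-1}(\Phi^{-1}(W))$, in particular $\zeta(g)=0$ for all $g\in C^\infty(\Delta)$ supported in $W$, whence $\nu(W)=0$ and $x\notin\supp\nu$. For the first claim I use the toric combinatorics of Entov--Polterovich and McDuff's probes \cite{McD11D}: a sufficiently small collar neighborhood $W_\partial$ of $\partial\Delta$ in $\Delta$ can be chosen so that $\pi^{-1}(\Phi^{-1}(W_\partial))$ is a finite union of pairwise disjoint displaceable open subsets $U_1,\dots,U_N$ of $V$ (obtained by lifting probes emanating from the facets of $\Delta$, together with small displaceable neighborhoods of the preimages of the lower-dimensional faces, and shrinking them to be disjoint). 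Then for $g\in C^\infty(\Delta)$ supported in $W_\partial$, the Reeb-invariant Hamiltonian $\wt g$ is supported in $\bigsqcup_i U_i$; writing $\wt g=\sum_i h_i$ with $\supp h_i\subset U_i$, the $h_i$ have pairwise disjoint supports, so $\{h_i,h_j\}_\alpha=0$, hence $\wt\phi_{\wt g}=\prod_i\wt\phi_{h_i}$ and $\zeta(g)=\mu(\wt\phi_{\wt g})=\sum_i\mu(\wt\phi_{h_i})=0$ by the vanishing property. Therefore $\nu(W_\partial)=0$, so $\supp\nu\subset\Delta\setminus W_\partial\subset\Int\Delta$.

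Now pick any $x_0\in\supp\nu$; by the above $x_0\in\Int\Delta$, so $L:=\Phi^{-1}(x_0)\subset M$ is a Lagrangian torus and $Y:=\pi^{-1}(L)\subset V$ is an $S^1$-bundle over $T^n$ whose Euler class is the restriction to $L$ of that of $\pi\fc V\to M$, hence zero since $[\w]|_L=0$; thus $Y\cong T^{n+1}$. It is transverse to $\xi$ (it contains the Reeb vector field) and satisfies $d\alpha|_Y=\pi^*\w|_Y=0$, so $Y$ is a pre-Lagrangian torus, and it is nondisplaceable by the previous paragraph. This proves the theorem.

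The step I expect to be the main obstacle is the combinatorial input used to show $\supp\nu\subset\Int\Delta$: producing, for a closed toric manifold, a collar neighborhood of $\partial\Delta$ whose preimage in $V$ is a finite union of pairwise disjoint displaceable open sets. This requires transplanting Entov--Polterovich's analysis of moment polytopes and McDuff's probe construction into the prequantization $\pi\fc V\to M$, using that Hamiltonian displacements of $M$ lift to contact displacements of $V$; the remaining steps are the essentially formal contact-geometric translations of their symplectic argument.
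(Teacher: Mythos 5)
The gap is exactly where you flagged it: the claim that a collar neighborhood $W_\partial$ of $\partial\Delta$ can be chosen so that $\pi^{-1}(\Phi^{-1}(W_\partial))$ is a finite union of \emph{pairwise disjoint} displaceable open subsets of $V$. For $n\geq 2$ the boundary $\partial\Delta$ is connected, so a collar $W_\partial$ is connected; since the moment map $\Phi\fc M\to\Delta$ is open with connected fibers and $\pi$ is an $S^1$-bundle, the set $\pi^{-1}(\Phi^{-1}(W_\partial))$ is connected as well. A connected set covered by pairwise disjoint open sets lies in a single one of them, so your decomposition would force the \emph{entire} preimage of the collar --- which contains $\pi^{-1}$ of the whole toric divisor --- to be displaced by one contactomorphism. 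That is not what probes give (McDuff's probes \cite{McD11D} displace individual fibers, not a saturated neighborhood of all of $\partial\Delta$ at once), it is unjustified, and it is not true in general. Moreover the step is unnecessary. The paper's route is much simpler: a fiber of $\wh{P}:=\Phi\circ\pi$ over a boundary point lies over a torus orbit in $M$ that is isotropic of dimension $<n$, and such tori are displaceable in $M$ \cite{Lau86H}; since Hamiltonian isotopies of $M$ lift to contact isotopies of the prequantization (as used in the proof of Theorem~\ref{thm_HamQuasimorphism}), the corresponding fibers in $V$ are displaceable. Combined with your (correct) observation that no fiber over a point of $\supp\nu$ is displaceable and that $\supp\nu\neq\emptyset$ because $\nu(\Delta)=\mu(\wt\phi_1)>0$, this gives $\supp\nu\subset\Int\Delta$ for free, and any interior point of $\supp\nu$ yields the desired nondisplaceable pre-Lagrangian torus.

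Apart from this, your argument is sound and is essentially the paper's proof in different packaging: the paper does not introduce the measure $\nu$, but runs the same mechanism directly --- if every fiber of $\wh{P}$ were displaceable, a partition of unity $\{f_j\}$ subordinate to a finite cover of $\Delta$ with displaceable saturated preimages would give $\mu(\wt\phi_1)=\sum_j\mu(\wt\phi_{\wh{P}^*f_j})=0$ by commutativity, the homomorphism property on abelian subgroups, and the vanishing property, contradicting $\mu(\wt\phi_1)>0$. Your Riesz-representation formulation is a correct refinement of this (it is the Entov--Polterovich argument from \cite{EntPol06Q}), and your identification of the interior fiber preimage as a pre-Lagrangian $(n+1)$-torus is fine; once you replace the collar/probe step by the displaceability of boundary fibers as above, the proof is complete.
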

\noindent
See Section~\ref{s:RigidityProof} for the proof of Theorem~\ref{t:fiberSH}.


\subsection{Quasi-morphisms on $\wt\Ham(M)$ and symplectic quasi-states}\label{s:SymplecticQMQS}

For a closed symplectic manifold $(M, \w)$, a smooth Hamiltonian $F \fc [0,1] \times M \to \R$ induces a 
time-dependent vector field $\{X_{F_{t}}\}_{t \in [0,1]}$ by \begin{equation}\label{e:Ham}
\iota_{X_{F_{t}}} \w = -dF_{t} \quad\mbox{where } F_{t} = F(t, \cdot).
\end{equation}
Integrating $X_{F_{t}}$ gives a Hamiltonian isotopy $\{\phi_{F}^{t}\}_{t \in [0,1]}$ of $M$ based at $\id$
the identity of $M$
and these are in bijection with smooth Hamiltonians $F \fc [0,1] \times M \to \R$ \emph{normalized} so 
$\int_{M} F_{t} \,\w^{n} = 0$ for all $t \in [0,1]$.  The Hamiltonian group $\Ham(M)$ is the set of time-one maps $\phi_F^1$ of 
such Hamiltonian isotopies
and $\wt\Ham(M)$ is its universal cover. We write $\wt\phi_{H}$ for the element of $\wt\Ham(M)$ represented by the Hamiltonian isotopy
$\{\phi_{H}^{t}\}_{t \in [0,1]}$.
For normalized functions $H, G \in C^{\infty}(M)$ their Poisson bracket
\begin{equation}\label{e:HamPB}
	\{H, G\}_{\w}:= \w(X_{G}, X_{H}) = - dG(X_H)
\end{equation}
is the Hamiltonian whose vector field is the Lie bracket of $X_{H}$ and $X_{G}$.   A subset $S \subset M$
is \textbf{displaceable} if there is $\phi \in \Ham(M)$ so that $\phi(S) \cap \ol{S} = \emptyset$.

For a quasi-morphism $\mu_{M}\fc \wt\Ham(M) \to \R$ one defines the following two properties \cite{EntPol03C, EntPolZap07Q}:
\begin{enumerate}
	\item[(i)] \textbf{Stable:} For normalized Hamiltonians $H, G\fc [0,1] \times M \to \R$
	\begin{equation}\label{e:stable}
		\int_{0}^{1}\min_{M}(H_{t} - G_{t})\,dt \leq \frac{\mu_{M}(\wt{\phi}_{G}) - \mu_{M}(\wt{\phi}_{H})}{\Vol(M)} \leq
		\int_{0}^{1} \max_{M}(H_{t} - G_{t})\,dt\,,
	\end{equation}
where $\Vol(M) = \int_M \omega^n$.
	\item[(ii)] \textbf{Calabi:} If $U \subset M$ is an open displaceable subset
	and if $H\fc [0,1] \times M \to \R$ has support in $[0,1] \times U$, then
	$$
		\mu_{M}(\wt{\phi}_{H}) = \mbox{Cal}_{U}(\wt{\phi}_{H}) := \int_{0}^{1} \int_{U} H_{t}\,\w^{n} dt\,.
	$$
\end{enumerate}
Such quasi-morphisms were constructed by Entov--Polterovich in \cite{EntPol03C} using spectral invariants in Hamiltonian Floer theory 
and their construction has been refined and extended in \cite{EntPol08S, FOOO11S, Lan11Q, Lan13H, Lan13Q, MonVicZap12P,
Ost06C, Ush11D}.

On a closed symplectic manifold $(M, \w)$ a \textbf{quasi-state}
is a functional $\zeta\fc C^{\infty}(M) \to \R$ satisfying
the following properties for all $H, K \in C^{\infty}(M)$:
\begin{enumerate}
\item[(i)]\textbf{Monotone:} If $H \leq K$, then $\zeta(H) \leq \zeta(K)$.
\item[(ii)]\textbf{Normalized:} $\zeta(1) = 1$.
\item[(iii)] \textbf{Quasi-linearity:} If $\{H, K\}_{\w} = 0$, then $\zeta(H+K) = \zeta(H) + \zeta(K)$.
\end{enumerate}
These quasi-states are the symplectic version of Aarnes' notion of topological quasi-state \cite{Aarnes_qss_qms}.
As established in \cite{EntPol06Q}, every stable quasi-morphism $\mu_{M}\fc \wt{\Ham}(M) \to \R$ induces a 
quasi-state $\zeta_{\mu_{M}}\fc C^{\infty}(M) \to \R$ defined by
\begin{equation}\label{e:QuasiStateFormula}
	\zeta_{\mu_{M}}(H) := \frac{\int_{M}H\,\w^{n} - \mu_{M}(\wt{\phi}_{H})}{\Vol(M)}\,.
\end{equation}
Such quasi-states are $\Ham(M)$-invariant. If $\mu_M$ also has the Calabi property, then $\zeta_{\mu_M}$ has the \tb{vanishing property}, that is
$\zeta(H) = 0$ whenever $\supp(H) \subset M$ is displaceable.
\begin{defin}
	Let $\zeta\fc C^{\infty}(M) \to \R$ be a quasi-state on a closed symplectic manifold $(M, \w)$.  A closed subset $X \subset M$ 
	is \textbf{$\zeta$-superheavy} if
	\begin{equation}\label{e:QSSH}
		\min_{X} H \leq \zeta(H) \leq \max_{X} H
	\end{equation}
	for all $H \in C^{\infty}(M)$.
\end{defin}
\noindent
This definition was introduced in \cite{EntPol09R} and $\zeta$-superheavy sets
$X \subset M$ are nondisplaceable when $\zeta$ is $\Ham(M)$-invariant,
by \cite[Theorem 1.4]{EntPol09R}.
See \cite{BirEntPol04C, BuhEntPol12P, EntPol06Q,  EntPol09C, EntPol09R, EntPol10C, EntPolPy12O, EntPolZap07Q, FOOO11S, Kha09H} for various applications of Entov--Polterovich's quasi-morphisms and quasi-states. 

Recall for a prequantization $\pi\fc (V, \alpha) \to (M, \w)$ one has the following central extension of Lie algebras
$$0 \to \R \to (C^\infty(V)^{S^1},\{\cdot,\cdot\}_\alpha) \to (C^\infty(M)/\R,\{\cdot,\cdot\}_\omega) \to 0\,.$$
Here $C^\infty(V)^{S^1} \simeq C^\infty(M)$ is the set of $S^1$-invariant functions on $V$ and $C^\infty(M)/\R$ is canonically the Lie algebra of $\Ham(M)$.  When $M$ is closed this sequence has a unique splitting by the Lie algebra homomorphism
$$
	\sigma\fc C^\infty(M)/\R \to C^\infty(V)^{S^1} \quad\mbox{given by}\quad H \mapsto \pi^*H - \frac{\int_MH\,\omega^n}{\Vol(M)}
$$
and $\sigma$ induces a homomorphism
\begin{equation}\label{e:PullBackHamiltonians}
	\pi^{*}\fc \wt\Ham(M) \to \wt\Cont_{0}(V) \quad\mbox{where}\quad \pi^{*}(\wt{\phi}_{H}) = \wt{\phi}_{\sigma(H)}\,.
\end{equation}
See \cite[Section 1.3]{Ben10T} for more details on this point and in particular a proof that \eqref{e:PullBackHamiltonians} is
a homomorphism.

We now have the following result, generalizing Ben Simon \cite{Ben07T}, which uses the homomorphism \eqref{e:PullBackHamiltonians} to relate quasi-morphisms on $\wt{\Cont}_0$ and $\wt{\Ham}$.  Recall that $\wt\phi_{1} \in \wt\Cont_0(V)$ is the element generated by the Reeb vector field $R_{\alpha}$.
\begin{thm}\label{thm_HamQuasimorphism}
	Let $\pi\fc(V,\alpha) \to (M, \w)$ be a prequantization of a closed symplectic manifold and let
	$\mu\fc\wt\Cont_{0}(V) \to \R$ be a monotone quasi-morphism, then
	\begin{equation}\label{e:muM}
		\mu_{M} := -\frac{\Vol(M)}{\mu(\wt{\phi}_{1})} (\mu \circ \pi^{*})\fc \wt{\Ham}(M) \to \R
	\end{equation}
	is a stable quasi-morphism. The quasi-state associated to $\mu_{M}$ from \eqref{e:QuasiStateFormula} has
	the form:
	$$
		\zeta_{\mu_{M}}(H) := \frac{\mu(\wt{\phi}_{\pi^*\! H})}{\mu(\wt{\phi}_{1})}\,.
	$$
	If $\mu$ has the vanishing property, then $\mu_{M}$ has the Calabi property and $\zeta_{\mu_{M}}$ has the vanishing property.
\end{thm}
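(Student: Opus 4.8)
The plan is to verify the defining properties of a stable quasi-morphism for $\mu_M$ directly, then compute $\zeta_{\mu_M}$, and finally check the Calabi property under the vanishing hypothesis. First I would observe that $\mu_M$ is a quasi-morphism because it is a nonzero scalar multiple of the composition $\mu \circ \pi^*$ of a homomorphism $\pi^*\fc \wt\Ham(M) \to \wt\Cont_0(V)$ (from \eqref{e:PullBackHamiltonians}) with a quasi-morphism $\mu$; homogeneity and the defect bound pass through composition with a homomorphism, and scaling preserves both. The normalizing constant $-\Vol(M)/\mu(\wt\phi_1)$ is well-defined since $\mu$ is nonzero and monotone: because $\wt\phi_1$ is generated by the strictly positive Reeb Hamiltonian $h \equiv 1$, monotonicity forces $\mu(\wt\phi_1) \neq 0$ (indeed, if it were zero then by homogeneity and monotonicity $\mu$ would vanish on all of $\wt\Cont_0(V)$, contradicting nonvanishing — this is essentially the argument that a monotone quasi-morphism separates $\id$ from $\wt\phi_1$).

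Next I would establish stability \eqref{e:stable}. The key identity is that for a normalized Hamiltonian $H\fc[0,1]\times M \to \R$ one has $\pi^*(\wt\phi_H) = \wt\phi_{\sigma(H)}$ where $\sigma(H_t) = \pi^*H_t - \Vol(M)^{-1}\int_M H_t\,\w^n$, so $\sigma(H_t)$ differs from the pulled-back Hamiltonian $\pi^*H_t$ by the time-dependent constant $c_t := \Vol(M)^{-1}\int_M H_t\,\w^n$. For two normalized Hamiltonians $H, G$, the concatenation/product formula for contact Hamiltonians gives that $\wt\phi_{\sigma(H)}^{-1}\wt\phi_{\sigma(G)}$ is generated (up to the quasi-morphism defect, which homogeneity kills on the relevant one-parameter considerations) by $\sigma(G) - \sigma(H)$ composed appropriately; the pointwise inequality
$$
\min_M(G_t - H_t) \le \sigma(G_t)(y) - \sigma(H_t)(y) \cdot (\text{up to the constant shift}) \le \max_M(G_t - H_t)
$$
together with monotonicity of $\mu$ and the fact that adding a constant $c$ to a contact Hamiltonian shifts the generated element by $\wt\phi_c$ (whose $\mu$-value is $c\cdot\mu(\wt\phi_1)$ by homogeneity, since $\wt\phi_c = \wt\phi_1^{\,c}$ in the appropriate sense for the flow of a constant) yields \eqref{e:stable} after multiplying by the normalization $-\Vol(M)/\mu(\wt\phi_1)$. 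Concretely: $\id \preceq \wt\phi_{\sigma(G)-\sigma(H)+\max(G_t-H_t)-c'_t\,}\cdots$ bounds $\mu(\wt\phi_{\sigma(G)}) - \mu(\wt\phi_{\sigma(H)})$ between $\mu(\wt\phi_1)\int_0^1\min_M(G_t-H_t)\,dt$ and $\mu(\wt\phi_1)\int_0^1\max_M(G_t-H_t)\,dt$, and dividing by $-\mu(\wt\phi_1)/\Vol(M)$ (note the sign flips the inequalities) gives exactly \eqref{e:stable}.

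For the quasi-state formula, plug $\mu_M$ into \eqref{e:QuasiStateFormula}: with $F = H$ normalized, $\mu_M(\wt\phi_H) = -\Vol(M)\mu(\wt\phi_{\sigma(H)})/\mu(\wt\phi_1)$, and $\int_M H\,\w^n = 0$ for normalized $H$, so $\zeta_{\mu_M}(H) = \mu(\wt\phi_{\sigma(H)})/\mu(\wt\phi_1)$; since $\sigma(H)$ and $\pi^*H$ differ by a constant and $\zeta_{\mu_M}$ is defined on all (not-necessarily-normalized) $H \in C^\infty(M)$, replacing $H$ by its normalization and using $\wt\phi_{\pi^*H} = \wt\phi_{\sigma(H)}\cdot\wt\phi_{\text{const}}$ plus homogeneity gives $\zeta_{\mu_M}(H) = \mu(\wt\phi_{\pi^*H})/\mu(\wt\phi_1)$ as claimed. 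Finally, for Calabi: if $\supp(H) \subset [0,1]\times U$ with $U \subset M$ displaceable by some $\phi \in \Ham(M)$, then $\pi^{-1}(U) \subset V$ is displaceable by $\pi^*\phi \in \Cont_0(V)$ (displacing $U$ downstairs lifts to displacing the $S^1$-invariant set $\pi^{-1}(U)$), and $\sigma(H)$ has support in $[0,1]\times\pi^{-1}(U)$ up to the constant correction $c_t$ — so I would write $\mu(\wt\phi_{\sigma(H)}) = \mu(\wt\phi_{c_\bullet}\cdot\wt\phi_{g})$ where $g = \pi^*H$ has support in $\pi^{-1}(U)$, apply the vanishing property of $\mu$ to conclude $\mu(\wt\phi_g) = 0$, and track the constant term to get $\mu(\wt\phi_{\sigma(H)}) = (\int_0^1 c_t\,dt)\,\mu(\wt\phi_1) = \Vol(M)^{-1}\big(\int_0^1\int_M H_t\,\w^n\,dt\big)\mu(\wt\phi_1)$; multiplying by the normalization produces $\mu_M(\wt\phi_H) = -\int_0^1\int_M H_t\,\w^n\,dt + \int_0^1\int_U H_t\,\w^n\,dt$-type bookkeeping that reduces to $\mathrm{Cal}_U(\wt\phi_H) = \int_0^1\int_U H_t\,\w^n\,dt$ for $H$ supported in $U$. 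The vanishing property of $\zeta_{\mu_M}$ then follows either from the general fact (quoted in the excerpt) that Calabi implies vanishing for the associated quasi-state, or directly from the formula together with the vanishing property of $\mu$. The main obstacle I anticipate is bookkeeping the time-dependent constants $c_t = \Vol(M)^{-1}\int_M H_t\,\w^n$ carefully — separating the "honest" part of $\sigma(H)$ supported over $\pi^{-1}(U)$ from the constant part, and making sure the quasi-morphism defect does not interfere, which it does not precisely because $\mu$ is homogeneous and hence a genuine homomorphism on the abelian subgroup generated by $\{\wt\phi_c \mid c \in \R\}$ together with any single flow.
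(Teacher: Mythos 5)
Your treatment of stability and of the quasi-state formula is essentially the paper's own argument: both hinge on the splitting $\mu(\wt{\phi}_{c+\pi^{*}\!H}) = \big(\int_{0}^{1}c(t)\,dt\big)\mu(\wt{\phi}_{1}) + \mu(\wt{\phi}_{\pi^{*}\!H})$ (this is Lemma~\ref{l:PullApart} in the paper) combined with monotonicity of $\mu$ and homogeneity on abelian subgroups. Two places in your sketch need tightening, though neither is fatal: the identity $\mu(\wt{\phi}_{\kappa}) = \kappa\,\mu(\wt{\phi}_{1})$ follows from homogeneity only for integer and hence rational $\kappa$; for arbitrary real $\kappa$ you must invoke monotonicity, as the paper does. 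Also, the abelianness you appeal to is not with ``any single flow'': $\wt{\phi}_{c}$ commutes with $\wt{\phi}_{\pi^{*}\!H}$ (and $\wt{\phi}_{c+\pi^{*}\!H} = \wt{\phi}_{c}\,\wt{\phi}_{\pi^{*}\!H}$) precisely because $\pi^{*}H_{t}$ is $S^{1}$-invariant, i.e.\ Reeb-invariant, so $\{1,\pi^{*}H_{t}\}_{\alpha}=0$; for a general contact Hamiltonian adding a constant does not merely shift the element by $\wt{\phi}_{c}$.

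Where you genuinely diverge is the Calabi property. The paper argues indirectly: it first observes that $\zeta_{\mu_{M}}$ inherits the vanishing property (displaceable sets in $M$ lift to displaceable sets in $V$) and then cites \cite[Proposition 1.7]{Bor13Q} to convert vanishing of the quasi-state into the Calabi property of $\mu_{M}$. You compute Calabi directly: for $H$ supported in $[0,1]\times U$ with $U$ displaceable, $\pi^{*}H$ is supported in $[0,1]\times \pi^{-1}(U)$, which is displaced by the lift of the displacing Hamiltonian isotopy, so $\mu(\wt{\phi}_{\pi^{*}\!H})=0$ by the vanishing property; the splitting then gives $\mu(\wt{\phi}_{\sigma(H)}) = -\big(\int_{0}^{1}c_{t}\,dt\big)\mu(\wt{\phi}_{1})$ with $c_{t}=\Vol(M)^{-1}\int_{M}H_{t}\,\w^{n}$ (your sign here is off), whence $\mu_{M}(\wt{\phi}_{H}) = \Vol(M)\int_{0}^{1}c_{t}\,dt = \int_{0}^{1}\int_{U}H_{t}\,\w^{n}\,dt = \Cal_{U}(\wt{\phi}_{H})$. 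This route is correct and buys self-containedness, replacing the external citation by one more application of the splitting lemma; the paper's route buys brevity by outsourcing that conversion. With the sign fixed and the two precisions above added, your proposal is a complete proof.
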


A historical remark is in order.  While Givental \cite{Giv90N} applied his quasi-morphism to various contact rigidity phenomena on $\RP^{2d-1}$, such as the existence of Reeb chords, it was first in the symplectic setting that Entov--Polterovich developed a systematic approach to use their quasi-morphisms in order to study symplectic rigidity. However as Theorem~\ref{thm_HamQuasimorphism} shows, for 
prequantizable symplectic manifolds, quasi-morphisms on $\wt{\Cont}_0$ are potentially more fundamental objects than quasi-morphisms on $\wt{\Ham}$.  A related question is if it is possible to obtain one of Entov--Polterovich's quasi-morphisms on $\wt{\Ham}(M)$ from a quasi-morphism on $\wt\Cont_{0}(V)$ via Theorem~\ref{thm_HamQuasimorphism}, and this is open even for the case of the prequantization $\RP^{3} \to \CP^1$.

The following proposition shows how the Entov--Polterovich notion of superheaviness \eqref{e:QSSH} with respect to a symplectic quasi-state on $(M, \w)$ is related to sub- and superheaviness with respect to a quasi-morphism on $\wt\Cont_{0}(V)$ when
$\pi\fc (V, \alpha) \to (M, \w)$ is a prequantization.

\begin{prop}\label{prop_prequant_rigidity}
If $\pi \fc (V,\alpha) \to (M,\omega)$ is a prequantization, $\mu \fc \wt\Cont_0(V) \to \R$ is a monotone quasi-morphism, and $\mu_M \fc \wt\Ham(M) \to \R$ is the quasi-morphism induced according to Theorem~\ref{thm_HamQuasimorphism}, then
\begin{enumerate}
\item if $Y \subset V$ is $\mu$-subheavy, then $\pi(Y) \subset M$ is $\zeta_{\mu_M}$-superheavy;
\item if $X\subset M$ is $\zeta_{\mu_M}$-superheavy, then $\pi^{-1}(X) \subset V$ is $\mu$-superheavy.
\end{enumerate}
\end{prop}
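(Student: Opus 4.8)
The plan is to run everything through the bridge identity $\zeta_{\mu_M}(H) = \mu(\wt\phi_{\pi^*H})/\mu(\wt\phi_1)$ supplied by Theorem~\ref{thm_HamQuasimorphism}, where $\mu(\wt\phi_1) > 0$ (it is nonnegative because $\id = \wt\phi_0 \preceq \wt\phi_1$ and $\mu$ is monotone, and nonzero because $\mu_M$ is defined). Note that the induced quasi-state $\zeta := \zeta_{\mu_M}$ is odd, $\zeta(-H) = -\zeta(H)$, since $\{H,-H\}_\omega = 0$ and $\zeta$ is quasi-linear and normalized; hence $\zeta$-superheaviness of a closed set $X$ is equivalent to the single inequality $\zeta(H) \le \max_X H$ for all $H$, and in each part it suffices to prove one such bound and dualize via $H \mapsto -H$. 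Also $\pi(Y)$ and $\pi^{-1}(X)$ are closed (indeed compact, since $V$ is), so the assertions make sense.

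For (i), suppose $Y$ is $\mu$-subheavy, fix $H \in C^\infty(M)$, put $c := \max_{\pi(Y)} H$, and let $h := \pi^*H - c$, an autonomous Reeb-invariant contact Hamiltonian with $h|_Y \le 0$. First I produce a smooth $g$ on $V$ with $g|_Y = 0$ and $g \ge h$: over a tubular neighborhood retraction $r\colon \cN \to Y$ set $g_0 := h - r^*(h|_Y)$, so $g_0|_Y = 0$ and $g_0 - h = -r^*(h|_Y) \ge 0$, and glue $g_0$ to a constant $\ge \max_V h$ outside $\cN$, using that a convex combination of functions $\ge h$ is again $\ge h$. Since $h \le g$ gives $\wt\phi_h \preceq \wt\phi_g$, monotonicity together with $\mu$-subheaviness of $Y$ yields $\mu(\wt\phi_h) \le \mu(\wt\phi_g) = 0$. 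Because $h$ is Reeb-invariant, $\{h,1\}_\alpha = 0$, so $\wt\phi_h$ commutes with the Reeb flow, and since contact Hamiltonian vector fields are linear in the Hamiltonian we get $\wt\phi_{\pi^*H} = \wt\phi_h\,\wt\phi_c$; as $\mu$ restricts to a homomorphism on the abelian subgroup they generate, $\mu(\wt\phi_{\pi^*H}) = \mu(\wt\phi_h) + \mu(\wt\phi_c) \le c\,\mu(\wt\phi_1)$, where $\mu(\wt\phi_c) = c\,\mu(\wt\phi_1)$ follows from homogeneity for rational $c$ and the monotone sandwiching $\wt\phi_{q_1} \preceq \wt\phi_c \preceq \wt\phi_{q_2}$ for real $c$. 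Dividing by $\mu(\wt\phi_1)$ gives $\zeta(H) \le \max_{\pi(Y)} H$; the dual bound follows from $-H$, so $\pi(Y)$ is $\zeta$-superheavy.

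For (ii), suppose $X$ is $\zeta$-superheavy, and let $h \in C^\infty(V)$ be autonomous with $h|_{\pi^{-1}(X)} > 0$. Consider the fiberwise minimum $\underline h(m) := \min_{\pi^{-1}(m)} h$, a continuous function on $M$ with $\underline h|_X > 0$ (every fiber over $X$ lies in $\pi^{-1}(X)$). By a routine smoothing from below, choose $\tilde g \in C^\infty(M)$ with $\tilde g \le \underline h$ everywhere and $\tilde g > 0$ on a neighborhood of $X$; then $\pi^*\tilde g \le h$ on $V$ and $\pi^*\tilde g$ is Reeb-invariant, so monotonicity and the bridge identity give $\mu(\wt\phi_h) \ge \mu(\wt\phi_{\pi^*\tilde g}) = \mu(\wt\phi_1)\,\zeta(\tilde g) \ge \mu(\wt\phi_1)\,\min_X \tilde g > 0$, the middle inequality being $\zeta$-superheaviness of $X$. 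Hence $\pi^{-1}(X)$ is $\mu$-superheavy.

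The content of the argument is just this repackaging of Theorem~\ref{thm_HamQuasimorphism} through the order-theoretic definitions; the only places requiring care are the two elementary constructions (the global dominating function $g$ in (i), the fiberwise-minimum function in (ii)) and the check that $\mu(\wt\phi_c) = c\,\mu(\wt\phi_1)$ for irrational $c$. The one subtlety I would be careful about is keeping the constant-shifted Hamiltonian $h = \pi^*H - c$ Reeb-invariant, so that $\mu$ splits additively over $\wt\phi_h$ and the Reeb element $\wt\phi_c$ — the dominating function $g$ itself need not (and in general cannot) be chosen Reeb-invariant.
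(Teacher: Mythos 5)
Your overall route is the same as the paper's: everything is funneled through the identity $\zeta_{\mu_M}(H)=\mu(\wt\phi_{\pi^*H})/\mu(\wt\phi_1)$ of Theorem~\ref{thm_HamQuasimorphism}, and your part (ii) (choose $H\in C^\infty(M)$ with $\pi^*H\le h$ and $H|_X>0$, then apply monotonicity) is essentially identical to the paper's argument, with the fiberwise minimum making the existence of such an $H$ explicit. The splitting $\mu(\wt\phi_{c+\pi^*H'})=c\,\mu(\wt\phi_1)+\mu(\wt\phi_{\pi^*H'})$ that you re-derive is exactly Lemma~\ref{l:PullApart}, and the dualization via oddness of $\zeta_{\mu_M}$ is fine.

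There is, however, a genuine gap in part (i): your construction of the dominating function $g$ (with $g|_Y=0$, $g\ge h$) uses a smooth tubular neighborhood retraction $r\fc\cN\to Y$, but a $\mu$-subheavy set $Y$ is by Definition~\ref{d:subheavy} only a closed subset, not a submanifold, and the proposition is needed in exactly that generality — e.g.\ $Y=\pi^{-1}(X)$ for a stem $X$, which can be as singular as a product of $1$-skeletons of triangulations (Corollary~\ref{c:Stem}). For such $Y$ no smooth retraction exists and the step fails as written. The fix is cheap and avoids any extension construction: for $\epsilon>0$ take a smooth non-decreasing $u\fc\R\to\R$ with $u\equiv c+\epsilon$ on $(-\infty,c+\epsilon/2]$ and $u(t)\ge t$ everywhere, and set $g:=u(\pi^*H)-(c+\epsilon)$; then $g|_Y=0$ (since $\pi^*H\le c$ on $Y$), $g\ge \pi^*H-(c+\epsilon)$, and the same commuting/splitting argument gives $\zeta_{\mu_M}(H)\le c+\epsilon$ for every $\epsilon$. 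Equivalently, do what the paper does: use the standard characterization that a closed set $X$ is $\zeta$-superheavy if and only if $\zeta(H)=0$ for every $H$ with $H|_X=0$ (proved by precisely this composition trick), after which part (i) is one line, since $H|_{\pi(Y)}=0$ forces $\pi^*H|_Y=0$ and hence $\mu(\wt\phi_{\pi^*H})=0$ by subheaviness.
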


\noindent
Theorem~\ref{thm_HamQuasimorphism} and Proposition~\ref{prop_prequant_rigidity} are proved in Section~\ref{s:ProofPrequantRigid}.

Given a collection of $(H_1, \dots, H_k)$ pairwise Poisson commuting Hamiltonians on $M$, organized as a map $\Phi\fc M \to \R^{k}$, Entov--Polterovich in \cite{EntPol06Q} defined a fiber $\Phi^{-1}(p)$ to be a \textbf{stem} if
every other fiber $\Phi^{-1}(q) \subset M$ was displaceable.
They proved in \cite[Theorem 1.8]{EntPol09R} that a stem $X \subset M$ is superheavy
with respect to any quasi-state with the vanishing property.
Using Theorem~\ref{thm_HamQuasimorphism} and Proposition~\ref{prop_prequant_rigidity} we now have the following corollary
for any prequantization $\pi \fc (\wh{M}, \alpha) \to (M, \w)$
and monotone quasi-morphism $\mu\fc \wt{\Cont}_{0}(\wh{M}) \to \R$ with the vanishing property:
\begin{coroll}\label{c:Stem}
	If $X \subset (M, \w)$ is a stem, then $\pi^{-1}(X) \subset \wh{M}$ is $\mu$-superheavy. \qed
\end{coroll}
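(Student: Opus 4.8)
The plan is to combine the two transfer results that have already been established: Entov--Polterovich's theorem that a stem is $\zeta$-superheavy for any quasi-state $\zeta$ with the vanishing property \cite[Theorem 1.8]{EntPol09R}, and Proposition~\ref{prop_prequant_rigidity}(ii), which promotes $\zeta_{\mu_M}$-superheaviness of a subset of $M$ to $\mu$-superheaviness of its preimage in $\wh M$. So the corollary is essentially a two-line deduction, and the only real content is checking that the hypotheses of these two inputs are met.

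First I would invoke Theorem~\ref{thm_HamQuasimorphism}: since $\mu\fc\wt\Cont_0(\wh M)\to\R$ is a monotone quasi-morphism with the vanishing property, the induced $\mu_M\fc\wt\Ham(M)\to\R$ is a stable quasi-morphism with the Calabi property, and therefore the associated quasi-state $\zeta_{\mu_M}\fc C^\infty(M)\to\R$ from \eqref{e:QuasiStateFormula} has the vanishing property. Here one needs $M$ to be closed, which is part of the definition of a prequantization as used in the statement of Corollary~\ref{c:Stem} (or at least can be assumed, since stems are a phenomenon for closed $M$). Next, because $X\subset M$ is a stem, \cite[Theorem 1.8]{EntPol09R} applies to $\zeta_{\mu_M}$ and yields that $X$ is $\zeta_{\mu_M}$-superheavy in the sense of \eqref{e:QSSH}. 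Finally, apply Proposition~\ref{prop_prequant_rigidity}(ii) with this $X$: it gives that $\pi^{-1}(X)\subset\wh M$ is $\mu$-superheavy, which is the claim.

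I would also note for completeness that $\pi^{-1}(X)$ is closed in $\wh M$, since $X$ is closed in $M$ (stems are fibers of a moment-type map and hence closed) and $\pi$ is continuous, so the set is of the form required by Definition~\ref{d:sh} of superheavy. No further argument is needed.

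The honest assessment is that there is no "hard part" here — the corollary is explicitly advertised in the text as an immediate consequence of Theorem~\ref{thm_HamQuasimorphism} and Proposition~\ref{prop_prequant_rigidity}, with the stem input borrowed verbatim from Entov--Polterovich. The only place one could slip is in the bookkeeping of which hypotheses transfer: one must make sure the vanishing property of $\mu$ (not merely monotonicity) is used, since it is exactly what is needed to get the Calabi property of $\mu_M$ and hence the vanishing property of $\zeta_{\mu_M}$, which in turn is the hypothesis of \cite[Theorem 1.8]{EntPol09R}. With that chain in place the proof is complete.
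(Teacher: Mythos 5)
Your proof is correct and is exactly the argument the paper intends: the corollary is stated with a \qed precisely because it follows by chaining Theorem~\ref{thm_HamQuasimorphism} (vanishing property of $\mu$ gives the vanishing property of $\zeta_{\mu_M}$), \cite[Theorem 1.8]{EntPol09R} (a stem is superheavy for any quasi-state with the vanishing property), and Proposition~\ref{prop_prequant_rigidity}(ii). Nothing in your write-up deviates from or adds a gap to that chain.
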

\noindent
Stems can be very singular subsets, an example being the product of $1$-skeletons
of fine triangulations of $2$-spheres \cite[Corollary 2.5]{EntPol06Q}.

\subsection{Examples of contact rigidity}\label{s:RigidityExamples}

In this subsection we will present concrete examples of subheavy and superheavy subsets of contact manifolds.

\subsubsection{Examples using Givental's quasi-morphism}

We will start with the rigidity results that just use Givental's monotone quasi-morphism
$\mu_{\Giv}\fc \wt\Cont_{0}(\RP^{2d-1}) \to \R$.  For us it will be convenient to introduce the following models of
the standard contact $S^{2d-1}$ and $\RP^{2d-1}$.
For $\gamma = (\gamma_1,\dots,\gamma_d)\in\N^d$, consider the sphere
\begin{equation}\label{e:SphereGamma}
	S^{2d-1}_\gamma = \{z\in\C^d\,|\,\textstyle\pi\sum_{j=1}^{d}\gamma_{j} |z_j|^2 = \sum_{j=1}^{d}\gamma_j\}
\end{equation}
with the contact form given by the restriction of
\begin{equation}\label{e:alphaSTD}
	\alpha_{\std} = \tfrac{1}{2} \sum_{j=1}^{d} (x_{j}\,dy_{j} - y_{j}\, dx_{j}) 
\end{equation}
to $S^{2d-1}_{\g}$ with Reeb flow
\begin{equation}\label{e:ReebGamma}
	\phi^{t}_{R_{\g}}(z_{1}, \dots, z_{d}) = (e^{2\pi i \g_{1} t/d}z_{1}, \dots, e^{2\pi i \g_{d} t/d}z_{d}).
\end{equation}
For the antipodal $\Z_{2}$-action on $\C^{d}$, let
\begin{equation}\label{e:RPgamma}
	(\R P^{2d-1}_\gamma, \xi_{\gamma}) := (S^{2d-1}_\gamma/\Z_2, \ker \alpha_{\std}).
\end{equation}
Note when $\g = (1, \dots, 1)$ that $(\RP^{2d-1}_{\g}, \xi_{\g})$ is the standard model for $(\RP^{2d-1}, \xi)$,
so we will drop the reference to $\g$ in this case.
Via radial projection $z \mapsto \tfrac{\sqrt{d}}{\sqrt{\pi}} \tfrac{z}{\abs{z}}$, which induces a contactomorphism
\begin{equation}\label{e:RescaleContact}
	r\fc(\R P^{2d-1}_{\gamma}, \xi_{\g}) \to (\R P^{2d-1}, \xi)\,,
\end{equation}
we have Givental's quasi-morphism $\mu_{\Giv}\fc \wt\Cont_{0}(\RP^{2d-1}_{\g}) \to \R$
for any $\g \in \N^{d}$.
\begin{lemma}\label{exam_torus_RP_superheavy}
	The torus
	\begin{equation}\label{e:Clifford}
		T_{\RP} := \{[z] \in \RP^{2d-1}_{\g} \mid \abs{z_{1}}^{2} = \cdots = \abs{z_{d}}^{2} = 1/\pi\} \subset \RP^{2d-1}_{\g}
	\end{equation}
	is $\mu_{\Giv}$-superheavy.	
\end{lemma}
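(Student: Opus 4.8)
The plan is to realize $T_{\RP}$ as the preimage, under a prequantization map onto $\CP^{d-1}$, of the symplectic Clifford torus, which is a \emph{stem}, and then to invoke Corollary~\ref{c:Stem}; the hypothesis needed there --- that $\mu_{\Giv}$ is a monotone quasi-morphism with the vanishing property --- is exactly Proposition~\ref{prop_ANLMI_has_vanishing}.

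First I would dispose of the weight vector $\g$. The contactomorphism $r\fc(\RP^{2d-1}_\g,\xi_\g)\to(\RP^{2d-1},\xi)$ of \eqref{e:RescaleContact} is radial rescaling onto the sphere of radius $\sqrt{d/\pi}$, and any $[z]\in T_{\RP}\subset\RP^{2d-1}_\g$ already satisfies $\abs{z}^2=\sum_{j=1}^d\abs{z_j}^2=d/\pi$, so $r$ restricts to the identity on $T_{\RP}$ and hence carries $T_{\RP}\subset\RP^{2d-1}_\g$ onto $T_{\RP}\subset\RP^{2d-1}$. Since $\mu_{\Giv}$ on $\wt\Cont_0(\RP^{2d-1}_\g)$ is by definition the pullback of $\mu_{\Giv}$ on $\wt\Cont_0(\RP^{2d-1})$ along $r$, and since being $\mu$-superheavy is transported by any contactomorphism and is independent of the choice of contact form (Proposition~\ref{p:Basics}(i)), it suffices to prove the lemma for the standard $\RP^{2d-1}$.

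Next I would identify $\pi\fc(\RP^{2d-1},\alpha_{\std})\to(\CP^{d-1},\w)$, the $\Z_2$-quotient of the Hopf prequantization $S^{2d-1}\to\CP^{d-1}$, as a prequantization of $\CP^{d-1}$ with a monotone form $\w$ (cf.\ the Remark following Theorem~\ref{thm_main}). The symplectic Clifford torus $X:=\{[z]\in\CP^{d-1}\mid\abs{z_1}=\dots=\abs{z_d}\}$ is a stem: it is the central fiber of the toric moment map of $\CP^{d-1}$, and every other fiber is displaceable --- the Floer-theoretic nondisplaceability of the monotone Clifford torus together with the displaceability of the remaining toric fibers, see \cite{EntPol06Q}. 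Corollary~\ref{c:Stem} then gives that $\pi^{-1}(X)\subset\RP^{2d-1}$ is $\mu_{\Giv}$-superheavy. Finally $\pi^{-1}(X)=T_{\RP}$: since the sphere has radius $\sqrt{d/\pi}$, the condition $\abs{z_1}=\dots=\abs{z_d}$ on $S^{2d-1}$ forces $\abs{z_j}^2=1/\pi$ for all $j$, so $\pi^{-1}(X)=\{[z]\in\RP^{2d-1}\mid\abs{z_1}^2=\dots=\abs{z_d}^2=1/\pi\}=T_{\RP}$, and the lemma follows.

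The only genuinely substantive input is the assertion that the Clifford torus is a stem in $\CP^{d-1}$, which in turn rests on the nondisplaceability of the monotone Clifford torus; everything else is bookkeeping. In particular the precise normalization making $S^{2d-1}/\Z_2\to\CP^{d-1}$ a prequantization of a monotone symplectic form is immaterial, since $\mu_{\Giv}$-superheaviness only concerns the sign of $\mu_{\Giv}(\wt\phi_h)$.
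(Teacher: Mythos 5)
Your proof is correct and follows essentially the same route as the paper: reduce to the standard $\RP^{2d-1}$ via the radial contactomorphism, observe that $\pi(T_{\RP})$ is the Clifford torus, which is a stem in $\CP^{d-1}$, and apply Corollary~\ref{c:Stem} together with Proposition~\ref{prop_ANLMI_has_vanishing}. One small correction: the stem property does \emph{not} rest on the Floer-theoretic nondisplaceability of the Clifford torus --- by definition a stem only requires all \emph{other} fibers of the commuting family to be displaceable (the paper cites \cite{BirEntPol04C}, Lemma 5.1, which uses the $U(d)$-action for exactly this), and it is the superheaviness obtained here that then \emph{implies} nondisplaceability of the Clifford torus, not the other way around.
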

\begin{proof}[Proof of Lemma~\ref{exam_torus_RP_superheavy}]
	Since the radial projection \eqref{e:RescaleContact} preserves $T_{\RP}$,
	it suffices to show $T_{\RP} \subset \RP^{2d-1}$ is $\mu_{\Giv}$-superheavy.
	Consider the prequantization
	$\pi\fc \RP^{2d-1} \to \CP^{d-1}$
	where we take
	$$\CP^{d-1} = \{[z_{1}: \cdots : z_{d}] \mid \pi \sum \abs{z_{j}}^{2} = d\}.$$
	Using the Hamiltonian $U(d)$-action on $\CP^{d-1}$, the 
	Clifford torus $\T_{\Clif}^{d-1} := \pi(T_{\RP})$ can be shown to be a stem
	\cite[Lemma 5.1]{BirEntPol04C}.
	Since $\mu_{\Giv}$ has the vanishing property by
	Proposition~\ref{prop_ANLMI_has_vanishing}, it follows from
	Corollary~\ref{c:Stem} that $T_{\RP}$ is $\mu_{\Giv}$-superheavy.
\end{proof}
\noindent
Lemma~\ref{exam_torus_RP_superheavy}
will play a large role in our proof of Theorem~\ref{thm_main}
for it will ensure we are applying Theorem~\ref{thm_ART} to a $\mu_{\Giv}$-superheavy subset.

While by Theorem~\ref{t:SHandsubheavy} it is impossible for a Legendrian submanifold to be superheavy, since they are always displaceable (for instance by an arbitrarily small positive contact isotopy), it is possible for a Legendrian to be subheavy as the next example shows.   The proof is given in Section~\ref{s:VanishingProof}.

\begin{lemma}\label{exam_Legendrian_RP_subheavy}
	The standard Legendrian
	\begin{equation}\label{e:StandardLegRP} 
		\R P^{d-1}_{L}:= \{[z] \in \RP^{2d-1}_{\g} \mid z \in \R^{d}\} \subset \RP^{2d-1}_{\g}
	\end{equation}
	is $\mu_{\Giv}$-subheavy.
\end{lemma}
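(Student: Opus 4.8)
The plan is to show that whenever $h$ is an autonomous contact Hamiltonian with $h|_{\RP^{d-1}_L} = 0$, we have $\mu_{\Giv}(\wt\phi_h) = 0$. First I would reduce to the standard model $(\RP^{2d-1}, \xi)$ via the contactomorphism $r$ of \eqref{e:RescaleContact}, which preserves $\RP^{d-1}_L$, so it suffices to treat $\g = (1,\dots,1)$. The key geometric input is that the standard Legendrian $\RP^{d-1}_L$ is the image in $\RP^{2d-1}$ of the fixed-point set of the complex-conjugation involution on $S^{2d-1} \subset \C^d$, and its normal directions are genuinely contracting/expanding under suitable contact isotopies: more precisely, $\RP^{d-1}_L$ can be displaced from a neighborhood of the "imaginary" Legendrian $\RP^{d-1}_{iL} := \{[z] \mid z \in i\R^d\}$, and together these two copies govern a decomposition of the contact sphere.

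The main step is to find, for a given $h$ vanishing on $\RP^{d-1}_L$, a way to push the support of (a conjugate of) $\wt\phi_h$ into a displaceable region so that the vanishing property of $\mu_{\Giv}$ (Proposition~\ref{prop_ANLMI_has_vanishing}(iii)) applies. The natural approach: since $h|_{\RP^{d-1}_L} = 0$, by $C^0$-continuity of $\mu_{\Giv}$ (Proposition~\ref{prop_ANLMI_has_vanishing}(ii)) I may approximate $h$ in $C^0$ by contact Hamiltonians $h^{(n)}$ that vanish on an open neighborhood $\cN_n$ of $\RP^{d-1}_L$; then it suffices to show $\mu_{\Giv}(\wt\phi_{h^{(n)}}) = 0$ for each $n$. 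So I am reduced to the case $\supp(h) \subset [0,1] \times (\RP^{2d-1} \setminus \cN)$ for a fixed neighborhood $\cN$ of $\RP^{d-1}_L$. The complement of a neighborhood of the standard Legendrian $\RP^{d-1}_L$ in $\RP^{2d-1}$ retracts onto $\RP^{d-1}_{iL}$, and I would exhibit an explicit contactomorphism $\psi \in \Cont_0(\RP^{2d-1})$ displacing this complement from itself — for instance, the contact lift of a rotation $z \mapsto e^{i\theta}z$ composed with a radial rescaling concentrating mass near $\RP^{d-1}_L$, or more carefully a contact isotopy generated by a Hamiltonian built from $\sum \abs{z_j}^2$-type functions that contracts the complement of $\cN$ into a Darboux ball. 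Once $\RP^{2d-1} \setminus \cN$ (or at least the support region) is shown to be displaceable, the vanishing property gives $\mu_{\Giv}(\wt\phi_{h^{(n)}}) = 0$, and then $C^0$-continuity gives $\mu_{\Giv}(\wt\phi_h) = 0$.

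The hard part will be verifying that the complement of a (small) neighborhood of the standard Legendrian $\RP^{d-1}_L \subset \RP^{2d-1}$ is genuinely displaceable by an ambient contactomorphism in the identity component. This is a statement about the contact topology of projective space, and the cleanest route is probably to pass to the sphere $S^{2d-1}$, use the conjugation-invariant splitting $\C^d = \R^d \oplus i\R^d$, and observe that the "imaginary quadrant" $\{\IM(z) \neq 0\}$-region near $\RP^{d-1}_{iL}$, which contains $\RP^{2d-1} \setminus \cN$, can be swept off itself by the contact flow rotating the imaginary part — making this quantitative (choosing the neighborhood size and checking the isotopy descends to $\RP^{2d-1}$ and remains in $\Cont_0$) is the technical crux. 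An alternative that sidesteps the explicit construction: use Theorem~\ref{t:SHandsubheavy}(2) in contrapositive together with a direct estimate on $\mu_{\Giv}$ of Reeb-type flows supported away from $\RP^{d-1}_L$, but I expect the displacement argument above to be the intended and more robust one, mirroring how subheaviness of Legendrians is typically established.
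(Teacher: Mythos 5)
Your overall skeleton is fine as far as it goes: reducing to the standard model via the radial projection \eqref{e:RescaleContact}, cutting $h$ off near $\RP^{d-1}_L$ and invoking $C^0$-continuity (Proposition~\ref{prop_ANLMI_has_vanishing}(ii)) to reduce to Hamiltonians supported away from the Legendrian are all legitimate moves. But the entire proof then rests on the claim that the complement of an arbitrarily small neighborhood $\cN$ of $\RP^{d-1}_L$ is displaceable by an element of $\Cont_0(\RP^{2d-1})$, and this is a genuine gap, not a technicality. Since $\RP^{2d-1}=\cN\cup U$ with $U=\RP^{2d-1}\setminus\cN$, a displacing $\psi$ would have to satisfy $\psi(U)\subset\cN$, i.e.\ it would compress essentially all of $\RP^{2d-1}$ into an arbitrarily thin tube around a Legendrian --- an extreme contact squeezing statement for which you offer no construction. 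The mechanisms you sketch do not produce it: a rotation of the imaginary factor alone, $(x,y)\mapsto(x,Ay)$, is not even a symplectomorphism of $(\R^{2d},dx\wedge dy)$, hence not a contactomorphism of the sphere or of $\RP^{2d-1}$; the map $z\mapsto e^{i\theta}z$ is (a reparametrization of) the Reeb flow \eqref{e:ReebGamma} and moves $\RP^{d-1}_L$ only along its Reeb saturation, doing nothing to push $U$ into $\cN$; and ``radial rescaling concentrating mass near $\RP^{d-1}_L$'' is not a contact transformation in any sense you have specified. Note also that you cannot hope to verify the claim by finding rigid objects in $U$ and none in $\cN$: any superheavy set disjoint from $\RP^{d-1}_L$ would contradict the very lemma you are proving (via Theorem~\ref{t:SHandsubheavy}), and small Darboux balls inside $\cN$ already contain closed pre-Lagrangian tori, so the usual candidates give no leverage either way. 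As it stands the displaceability assertion is an unproved statement that looks at least as deep as the lemma itself, and nothing in $\RP^{2d-1}$'s known flexibility supports it --- recall that this manifold is exactly the one where Givental-type rigidity (orderability, the superheavy torus of Lemma~\ref{exam_torus_RP_superheavy}) lives.

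The paper's proof avoids displacement entirely and uses structure specific to Givental's construction. If $h$ is autonomous with $h|_{\RP^{d-1}_L}=0$, then $X_h$ is tangent to $\RP^{d-1}_L$: indeed $\alpha_{\std}(X_h)=h=0$ and $d\alpha_{\std}(X_h,v)=-dh(v)=0$ for $v$ tangent to the Legendrian, and a Legendrian is Lagrangian in $(\xi,d\alpha_{\std})$, so $X_h|_{\RP^{d-1}_L}\in T\RP^{d-1}_L$. Hence the induced path of Legendrians $\{\phi_h^t(\RP^{d-1}_L)\}$ is constant, its Legendrian nonlinear Maslov index vanishes, and Givental's comparison result (\cite[Section 9, Corollary 2]{Giv90N}) identifies the asymptotic nonlinear Maslov index of the contact isotopy with that of the induced Legendrian path, giving $\mu_{\Giv}(\wt\phi_h)=0$ directly. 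So the missing idea in your proposal is precisely this tangency-plus-Legendrian-Maslov-index argument; without it (or a genuine proof of your squeezing claim) the proof is incomplete.
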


\noindent
Once we take the orbit of $\RP^{d-1}_{L}$ under the Reeb flow, which is a closed subset since the Reeb flow is periodic, we get the following
immediate corollary of Lemma~\ref{exam_Legendrian_RP_subheavy} and 
Proposition~\ref{p:subheavyReebImpliesSuperheavy}.
\begin{coroll}\label{exam_Reeb_saturated_prelagrangian}
	The subset
	\begin{equation}\label{e:Lgamma}
		L_{\g} := \bigcup_{t \in \R} \phi_{R_{\gamma}}^{t}(\RP^{d-1}_{L}) \subset (\RP^{2d-1}_{\g}, \xi_{\g})
	\end{equation}
	is $\mu_{\Giv}$-superheavy. \qed
\end{coroll}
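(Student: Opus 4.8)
The plan is to derive the statement as an immediate consequence of Lemma~\ref{exam_Legendrian_RP_subheavy}, the upward monotonicity of subheaviness in Proposition~\ref{p:Basics}(ii), and Proposition~\ref{p:subheavyReebImpliesSuperheavy}. First I would check that $L_\g$ is a closed subset of $\RP^{2d-1}_\g$. By the explicit formula \eqref{e:ReebGamma} the Reeb flow $\{\phi^t_{R_\g}\}$ is periodic, say of period $T>0$, so that $L_\g=\bigcup_{t\in[0,T]}\phi^t_{R_\g}(\RP^{d-1}_L)$ is the image of the compact set $[0,T]\times\RP^{d-1}_L$ under the continuous evaluation map $(t,x)\mapsto\phi^t_{R_\g}(x)$; hence $L_\g$ is compact and in particular closed, so it is an admissible set in the subheavy/superheavy hierarchy. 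This closedness is the only point requiring an actual argument, and it is precisely why the corollary is phrased for the Reeb saturation of $\RP^{d-1}_L$ rather than for a single, possibly non-closed, orbit.

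Next, Lemma~\ref{exam_Legendrian_RP_subheavy} gives that $\RP^{d-1}_L$ is $\mu_{\Giv}$-subheavy. Since $\RP^{d-1}_L\subset L_\g$ and both sets are closed, Proposition~\ref{p:Basics}(ii) then shows that $L_\g$ is itself $\mu_{\Giv}$-subheavy.

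Finally, $L_\g$ is by construction invariant under the Reeb flow $\{\phi^t_{R_\g}\}$, and $R_\g$ is a positive contact vector field since $\alpha_{\std}(R_\g)\equiv 1>0$. Recalling that $\mu_{\Giv}$ is monotone by Proposition~\ref{prop_ANLMI_has_vanishing}, Proposition~\ref{p:subheavyReebImpliesSuperheavy}, applied to the $\mu_{\Giv}$-subheavy set $L_\g$ together with the positive contact vector field $R_\g$ whose flow preserves $L_\g$, yields that $L_\g$ is $\mu_{\Giv}$-superheavy, which is the claim. No genuine obstacle arises beyond the closedness verification above; the rest is a direct chaining of the quoted results.
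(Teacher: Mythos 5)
Your argument is correct and is exactly the paper's intended proof: the corollary is stated as an immediate consequence of Lemma~\ref{exam_Legendrian_RP_subheavy} together with Proposition~\ref{p:subheavyReebImpliesSuperheavy}, with closedness of $L_{\g}$ coming from periodicity of the Reeb flow \eqref{e:ReebGamma}, and upward inheritance of subheaviness via Proposition~\ref{p:Basics}(ii). Your explicit compactness verification of closedness is just a spelled-out version of the paper's parenthetical remark, so there is nothing to add.
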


Corollary~\ref{exam_Reeb_saturated_prelagrangian} can be used to prove rigidity in weighted complex 
projective spaces. Recall for a primitive vector $\g \in \N^{d}$ that the weighted complex projective space $\C P(\gamma)$
is the symplectic orbifold obtained as the quotient of $S^{2d-1}_\gamma$ by the Reeb flow \eqref{e:ReebGamma}. 
A Hamiltonian isotopy of $\C P(\gamma)$ is by definition an isotopy that lifts to a contact isotopy of $S^{2d-1}_\gamma$ preserving the contact form.  If the fixed point set of the involution on $\CP(\g)$ induced by complex conjugation on $\C^d$ is
$$
	\RP(\g) \subset \CP(\g)
$$
and
$$
	T_{\CP} := \{[z] \in \CP(\g) \mid \abs{z_{1}}^{2} = \cdots = \abs{z_{d}}^{2} = 1/\pi\} \subset \CP(\g)
$$
then we have the following proposition.
\begin{prop}
	If for a primitive vector $\g = (\g_{1}, \dots, \g_{d}) \in \N^d$ each $\gamma_j$ is odd, then 
	$$
		\RP(\g) \cap \psi(\RP(\g)) \not= \emptyset\,, \quad \RP(\g) \cap \psi(T_{\CP})\not= \emptyset\,,
		\quad T_{\CP} \cap \psi(T_{\CP})\not= \emptyset
	$$
	for all Hamiltonian isotopies $\psi$ of $\C P(\gamma)$.
\end{prop}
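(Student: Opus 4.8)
The strategy is to lift the problem from the weighted projective orbifold $\CP(\g)$ to the contact sphere $S^{2d-1}_\g$, apply the rigidity results for $\mu_{\Giv}$ established for $\RP^{2d-1}_\g$, and push the intersection statements back down. First I would use the oddness hypothesis: since every $\g_j$ is odd, the antipodal $\Z_2$-action on $\C^d$ descends to the $\Z_2$-action generated by the half-period element $\phi^{d/2}_{R_\g}$ of the Reeb flow \eqref{e:ReebGamma}, so that $\RP^{2d-1}_\g = S^{2d-1}_\g/\Z_2$ sits as an intermediate quotient between $S^{2d-1}_\g$ and $\CP(\g) = S^{2d-1}_\g/S^1$, and there is a (branched, in the orbifold sense) covering-type projection $q\fc \RP^{2d-1}_\g \to \CP(\g)$ whose fibers are Reeb circles. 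Under this map the involution fixed locus $\RP(\g)$ pulls back to $q^{-1}(\RP(\g)) = L_\g$, the Reeb-saturated Legendrian of \eqref{e:Lgamma}, and $T_{\CP}$ pulls back to $T_{\RP}$ of \eqref{e:Clifford}; both identifications are immediate from the coordinate descriptions.

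Next I would transfer the dynamics: a Hamiltonian isotopy $\psi$ of $\CP(\g)$ lifts, by the very definition of Hamiltonian isotopy on this orbifold, to a contact isotopy $\wt\psi$ of $S^{2d-1}_\g$ preserving $\alpha_{\std}$; since it commutes with the Reeb flow it descends to a contactomorphism $\ol\psi \in \Cont_0(\RP^{2d-1}_\g,\xi_\g)$, and $\ol\psi$ maps $L_\g$ and $T_{\RP}$ to the preimages under $q$ of $\psi(\RP(\g))$ and $\psi(T_{\CP})$ respectively. Therefore each of the three desired intersections in $\CP(\g)$ is equivalent to the corresponding intersection in $\RP^{2d-1}_\g$: $\RP(\g)\cap\psi(\RP(\g))\ne\emptyset$ iff $L_\g \cap \ol\psi(L_\g) \ne \emptyset$, and similarly for the other two pairs. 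Now invoke the rigidity package for $\mu_{\Giv}$: by Corollary~\ref{exam_Reeb_saturated_prelagrangian} the set $L_\g$ is $\mu_{\Giv}$-superheavy, by Lemma~\ref{exam_torus_RP_superheavy} the torus $T_{\RP}$ is $\mu_{\Giv}$-superheavy, and every superheavy set is subheavy by Theorem~\ref{t:SHandsubheavy}(i). Then Corollary~\ref{c:MainPoint} (applied with $Z$ superheavy and $Y$ the relevant superheavy-hence-subheavy set, both preserved by $\Cont_0$) gives $\ol\psi(Z)\cap Y \ne \emptyset$ in each of the three cases, which is exactly what we need.

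The one genuine technical point — and the main obstacle — is making the reduction $q\fc\RP^{2d-1}_\g\to\CP(\g)$ rigorous when the $\g_j$ are not all equal, since then $\CP(\g)$ is a genuine orbifold and $q$ is not a fiber bundle in the smooth sense; one must check that the lift-and-descend correspondence for isotopies is a bijection at the level of the groups involved, i.e. that a Hamiltonian isotopy of the orbifold $\CP(\g)$ really does produce a well-defined element of $\Cont_0(\RP^{2d-1}_\g)$ fixing $\alpha_{\std}$, and that the resulting contactomorphism genuinely lies in the identity component (this follows by lifting a path of Hamiltonian isotopies). Once that correspondence is in place, everything else is bookkeeping: the set-equalities $q^{-1}(\RP(\g)) = L_\g$ and $q^{-1}(T_{\CP}) = T_{\RP}$ and the equivariance $q^{-1}(\psi(\cdot)) = \ol\psi(q^{-1}(\cdot))$ are read off the explicit coordinates in \eqref{e:SphereGamma}–\eqref{e:Clifford}, and the nonemptiness of the downstairs intersections is then an immediate consequence of the nonemptiness of the upstairs ones, since $q$ is surjective.
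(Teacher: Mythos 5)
Your proposal is correct and follows essentially the same route as the paper: use the oddness of the $\g_j$ to identify the antipodal map with a time of the Reeb flow, so that Hamiltonian isotopies of $\CP(\g)$ lift to contact isotopies of $\RP^{2d-1}_\g$, then apply the $\mu_{\Giv}$-superheaviness of $L_\g$ and $T_{\RP}$ (Corollary~\ref{exam_Reeb_saturated_prelagrangian}, Lemma~\ref{exam_torus_RP_superheavy}) together with Theorem~\ref{t:SHandsubheavy}/Corollary~\ref{c:MainPoint}, and project the intersections back down. The orbifold subtlety you flag is not an issue, since the paper defines a Hamiltonian isotopy of $\CP(\g)$ precisely as one that lifts to a contact isotopy of $S^{2d-1}_\g$ preserving the contact form, and for the downstairs conclusion only the inclusion $q(L_\g)\subset\RP(\g)$ (not the equality of preimages) is needed.
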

\begin{proof}
	The fact that all $\gamma_j$ are odd is equivalent to the time $t = \tfrac{1}{2}$ Reeb flow \eqref{e:ReebGamma} 
	being the antipodal map on $S^{2d-1}_\gamma$.  Therefore if each $\g_{j}$ is odd, then 
	the quotient map $S^{2d-1}_\gamma \to \C P(\gamma)$ factors
	through the projection map
	$$\pi\fc \R P^{2d-1}_\gamma \to \C P(\gamma)$$
	and hence any Hamiltonian isotopy of $\CP(\g)$ lifts to a contact isotopy of $\RP^{2d-1}_{\g}$.

	By the definitions, under the projection map
	 $\pi(L_{\g}) \subset \RP(\g)$ and $\pi(T_{\RP}) = T_{\CP}$.
	Since $L_{\g}$ and $T_{\RP}$ are $\mu_{\Giv}$-superheavy by
	Lemmas~\ref{exam_Reeb_saturated_prelagrangian} and \ref{exam_torus_RP_superheavy}
	it follows from Theorem~\ref{t:SHandsubheavy}
	that both $L_{\g}$ and $T_{\RP}$ are nondisplaceable 
	and cannot be displaced from each other by a contact isotopy.
	Therefore the same holds for $\RP(\g)$ and $T_{\CP}$ for Hamiltonian isotopies.
\end{proof}
\noindent
Nondisplaceability of $T_{\CP} \subset \CP(\g)$ for any primitive $\g \in \N^{d}$ was proved by
Woodward \cite{Woo11G} and Cho--Poddar \cite{ChoPod12H}. Nondisplaceability of $\R P(\gamma)$ for an odd primitive vector $\gamma$ was previously proved by
Lu \cite{Lu08S}.


\subsubsection{Examples using the quasi-morphisms from Theorem~\ref{thm_main}}

In the proof of Theorem~\ref{thm_main} in Section~\ref{s:ProofMainThm} we
will apply Theorem~\ref{thm_ART} to Givental's quasi-morphism to build $\mu$.
In particular for an appropriate primitive vector $\g \in \N^{d}$ in Section~\ref{s:ApplyingART} we present
the prequantization $(\wh{M}, \alpha)$ in the setting \eqref{e:SetUpART} of Theorem~\ref{thm_ART}
\begin{equation}\label{e:SetUpApply}
	(\RP^{2d-1}_{\g}, \xi_{\g}, \alpha_{\std}) \supset (Y, \alpha_{\std}|_{Y}) \stackrel{\rho}{\longrightarrow} (\wh{M}, \xi, \a)
\end{equation}
where $Y$ is a $\mu_{\Giv}$-superheavy submanifold containing $T_{\RP}$ and $\mu := \ol{\mu}_{\Giv}$ is the reduction
of Givental's quasi-morphism.
For the torus $T_{\RP}$ and standard Legendrian $\RP_{L}^{d-1}$ in $\RP^{2d-1}_{\g}$ from
\eqref{e:Clifford} and \eqref{e:StandardLegRP} define the following two subsets of $\wh{M}$
\begin{equation}\label{e:ContactReal}
	T_{\wh{M}}:= \rho(T_{\RP}) \quad\mbox{and}\quad
	\wh M_\R := \rho(Y\cap \R P^{d-1}_L).
\end{equation}
Note that $T_{\wh M}$ is a pre-Lagrangian torus while $\wh M_\R$ is Legendrian. We now have the following corollary.
\begin{coroll}\label{c:RealPartContact}
	For a quasi-morphism $\mu\fc \wt{\Cont}_0(\wh{M}) \to \R$ from Theorem~\ref{thm_main},
	the pre-Lagrangian $T_{\wh{M}}$ is $\mu$-superheavy and the Legendrian $\wh{M}_{\R}$ is $\mu$-subheavy.
\end{coroll}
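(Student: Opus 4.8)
The plan is to deduce Corollary~\ref{c:RealPartContact} directly from Theorem~\ref{thm_rigidity_descends_ART}, using the explicit description of $\mu$ and of the reduction data set up in \eqref{e:SetUpApply}. Recall that a quasi-morphism $\mu\fc\wt\Cont_0(\wh M)\to\R$ produced by Theorem~\ref{thm_main} is, by construction, $\mu=\ol\mu_{\Giv}$, the reduction of Givental's quasi-morphism $\mu_{\Giv}\fc\wt\Cont_0(\RP^{2d-1}_\g)\to\R$ along the diagram \eqref{e:SetUpApply}, in which $Y\subset\RP^{2d-1}_\g$ is a $\mu_{\Giv}$-superheavy strictly coisotropic submanifold containing $T_{\RP}$ and $\rho\fc Y\to\wh M$ is the associated fiber bundle. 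By Proposition~\ref{prop_ANLMI_has_vanishing}, $\mu_{\Giv}$ is monotone and $C^0$-continuous, hence so is $\ol\mu_{\Giv}=\mu$ by Theorem~\ref{thm_ART}; thus the pair $(\mu_{\Giv},\mu)$ satisfies the hypotheses of Theorem~\ref{thm_rigidity_descends_ART}.

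For the superheavy statement, I would take $Z=T_{\RP}$, which is $\mu_{\Giv}$-superheavy by Lemma~\ref{exam_torus_RP_superheavy}. Since $T_{\RP}\subset Y$ we have $Y\cap T_{\RP}=T_{\RP}$, so $\rho(Y\cap T_{\RP})=\rho(T_{\RP})=T_{\wh M}$ by the definition \eqref{e:ContactReal}. The superheavy case of Theorem~\ref{thm_rigidity_descends_ART} then yields that $T_{\wh M}$ is $\ol\mu_{\Giv}$-superheavy, i.e.\ $\mu$-superheavy, as desired.

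For the subheavy statement, I would take $Z=\RP^{d-1}_L$, which is $\mu_{\Giv}$-subheavy by Lemma~\ref{exam_Legendrian_RP_subheavy}. The subheavy case of Theorem~\ref{thm_rigidity_descends_ART} gives that $\rho(Y\cap\RP^{d-1}_L)\subset\wh M$ is $\ol\mu_{\Giv}$-subheavy; by \eqref{e:ContactReal} this set is precisely $\wh M_\R$, so $\wh M_\R$ is $\mu$-subheavy. This completes the argument.

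There is essentially no analytic obstacle here: the substantive content has been front-loaded into Theorem~\ref{thm_rigidity_descends_ART} (which in turn rests on Theorem~\ref{t:SHandsubheavy} and the $C^0$-continuity propagated by Theorem~\ref{thm_ART}) together with Lemmas~\ref{exam_torus_RP_superheavy} and \ref{exam_Legendrian_RP_subheavy}. The only points needing care are bookkeeping: checking that the identifications $\rho(Y\cap T_{\RP})=T_{\wh M}$ and $\rho(Y\cap\RP^{d-1}_L)=\wh M_\R$ agree with \eqref{e:ContactReal}, and that $\mu=\ol\mu_{\Giv}$ for the particular prequantization $\wh M$ fixed in Section~\ref{s:ApplyingART} — both immediate once the construction in \eqref{e:SetUpApply} is in hand. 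One could additionally remark that $T_{\RP}$ and $\RP^{d-1}_L\cap Y$ are saturated by the fibers of $\rho$, so their images are genuine submanifolds of $\wh M$ (a torus and a Legendrian, respectively), but this is not needed for the superheavy/subheavy conclusions and follows from the torus-invariance used to produce $Y$ and $\rho$.
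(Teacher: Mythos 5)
Your proposal is correct and follows exactly the paper's route: the paper proves this corollary by citing Theorem~\ref{thm_rigidity_descends_ART} together with Lemmas~\ref{exam_torus_RP_superheavy} and \ref{exam_Legendrian_RP_subheavy}, which is precisely your argument. The extra bookkeeping you include (verifying the hypotheses of Theorem~\ref{thm_rigidity_descends_ART} and matching $\rho(Y\cap T_{\RP})=T_{\wh M}$, $\rho(Y\cap\RP^{d-1}_L)=\wh M_\R$ with \eqref{e:ContactReal}) is accurate and simply makes explicit what the paper leaves implicit.
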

\begin{proof}
	This follows from Theorem~\ref{thm_rigidity_descends_ART}
	together with Lemma~\ref{exam_torus_RP_superheavy} and 
	Lemma~\ref{exam_Legendrian_RP_subheavy}.
\end{proof}

The next result concerns rigidity for the
real part $M_{\R} \subset (M, \w)$ of a symplectic toric manifold, which is characterized as
the fixed point set of the anti-symplectic involution that preserves the moment map.
Using the prequantization $\pi\fc (\wh{M}, \a) \to (M, \w)$
we construct in Section~\ref{s:BuildPrequantization}
for a monotone even toric manifold, the real part of $M$ can be identified with
$$
	M_{\R} := \pi(\wh M_\R)
$$
where $\wh{M}_{\R} \subset \wh{M}$ is from \eqref{e:ContactReal}.
For the quasi-morphism $\mu\fc \wt\Cont_{0}(\wh{M}) \to \R$ from Theorem~\ref{thm_main}, let
$\zeta_{\mu_{M}} \fc C^{\infty}(M) \to \R$ be the induced symplectic quasi-state on $(M, \w)$
from Theorem~\ref{thm_HamQuasimorphism}.

\begin{prop}\label{p:RealPart}
	The real part $M_{\R} \subset (M, \w)$ of a monotone even toric symplectic manifold
	is $\zeta_{\mu_{M}}$-superheavy and hence nondisplaceable.
\end{prop}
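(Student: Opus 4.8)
The plan is to identify a $\mu$-subheavy subset of $\wh M$ that projects onto $M_\R$, and then transfer this subheaviness across the prequantization $\pi\fc(\wh M,\a)\to(M,\w)$ using Proposition~\ref{prop_prequant_rigidity}(i), which says that the $\pi$-image of a $\mu$-subheavy set is $\zeta_{\mu_M}$-superheavy. Concretely, by Corollary~\ref{c:RealPartContact} the Legendrian $\wh M_\R = \rho(Y\cap\RP^{d-1}_L)\subset\wh M$ is $\mu$-subheavy, where $\mu = \ol\mu_{\Giv}$ is the reduction of Givental's quasi-morphism from Theorem~\ref{thm_main}. Since by definition $M_\R = \pi(\wh M_\R)$, applying Proposition~\ref{prop_prequant_rigidity}(i) with $Y = \wh M_\R$ immediately gives that $M_\R\subset(M,\w)$ is $\zeta_{\mu_M}$-superheavy. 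Finally, nondisplaceability follows from \cite[Theorem 1.4]{EntPol09R} once we know $\zeta_{\mu_M}$ is $\Ham(M)$-invariant, which holds because $\zeta_{\mu_M}$ is the quasi-state associated via \eqref{e:QuasiStateFormula} to the stable quasi-morphism $\mu_M$ furnished by Theorem~\ref{thm_HamQuasimorphism}, and such quasi-states are automatically $\Ham(M)$-invariant.

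The one point requiring care — and the part I expect to be the main obstacle — is verifying that $M_\R$ is genuinely closed so that it fits into the hierarchy of superheavy subsets (see Remark~\ref{r:rigidSetsClosed}), and more substantively that the identification $M_\R = \pi(\wh M_\R)$ with the honest real part, i.e.\ the fixed locus of the anti-symplectic involution preserving the moment map, is correct. This is really bookkeeping about the explicit construction of the prequantization in Section~\ref{s:BuildPrequantization}: the involution on $\wh M$ descending from complex conjugation on $\C^d$ (restricted to $\RP^{2d-1}_\g$, then passed through the contact reduction $\rho$) must be shown to cover the anti-symplectic involution on $M$, and its fixed set must be shown to be exactly $\pi(\wh M_\R) = \pi\rho(Y\cap\RP^{d-1}_L)$. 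Granting the setup of Sections~\ref{s:BuildPrequantization} and \ref{s:ApplyingART}, this is a direct unwinding of definitions, since $\RP^{d-1}_L = \{[z]\in\RP^{2d-1}_\g\mid z\in\R^d\}$ is precisely the real locus upstairs and both $\rho$ and $\pi$ are equivariant for the conjugation involutions by construction.

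Thus the proof reduces to a short chain: $\wh M_\R$ is $\mu$-subheavy (Corollary~\ref{c:RealPartContact}) $\Rightarrow$ $M_\R = \pi(\wh M_\R)$ is $\zeta_{\mu_M}$-superheavy (Proposition~\ref{prop_prequant_rigidity}(i)) $\Rightarrow$ $M_\R$ is nondisplaceable (\cite[Theorem 1.4]{EntPol09R}, using $\Ham(M)$-invariance of $\zeta_{\mu_M}$ from Theorem~\ref{thm_HamQuasimorphism}).
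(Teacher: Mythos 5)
Your proposal is correct and follows essentially the same route as the paper: Corollary~\ref{c:RealPartContact} gives $\mu$-subheaviness of $\wh M_\R$, Proposition~\ref{prop_prequant_rigidity}(i) transfers this to $\zeta_{\mu_M}$-superheaviness of $M_\R = \pi(\wh M_\R)$, and nondisplaceability follows from the $\Ham(M)$-invariance of the quasi-state via \cite[Theorem 1.4]{EntPol09R}. The identification $M_\R = \pi(\wh M_\R)$ that you flag as needing care is taken in the paper as the stated identification of the real part within the construction of Section~\ref{s:BuildPrequantization}, so no further argument is required there.
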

\begin{proof}
	Using $\wh{M}_{\R}$ is $\mu$-subheavy by Corollary~\ref{c:RealPartContact} 
	it follows $M_\R = \pi(\wh{M}_{\R})$ is $\zeta_{\mu_M}$-superheavy by
	Proposition~\ref{prop_prequant_rigidity} and therefore is nondisplaceable.
\end{proof}

\noindent
Haug~\cite{Hau13O} proved the nondisplaceability part of Proposition~\ref{p:RealPart} without the even assumption
using Biran--Cornea's Lagrangian quantum homology \cite{BirCor09A, BirCor09R}.

Similarly the central toric fiber $T_{M} \subset (M, \w)$
of a monotone even toric manifold is nondisplaceable and cannot be displaced from the
real part $M_{\R}$.  This is because
$\pi^{-1}(T_{M}) = T_{\wh{M}}$
so Proposition~\ref{prop_prequant_rigidity} and Corollary~\ref{c:RealPartContact}
imply $T_{M}$ is $\zeta_{\mu_{M}}$-superheavy.  The nondisplaceability now follows from \cite[Theorem 1.4]{EntPol09R}.
These results have been established by various authors \cite{AbrMac13R, AlsAmo12F, Cho08N, EntPol06Q, FOOO11L}.
In particular Abreu--Macarini \cite{AbrMac13R} showed how simple previous nondisplaceability results
in $\C P^n$ can be combined with symplectic reduction to prove the nondisplaceability results for $T_{M}$ and the pair
$(T_{M}, M_{\R})$, but could not prove $M_{\R}$ was nondisplaceable.  


\subsection{Orderability and metrics on $\wt{\Cont}_{0}$}

\noindent
Recall from Section~\ref{section_qms} 
that a contact manifold $(V, \xi)$ is orderable if $\wt{\Cont}_0(V,\xi)$ is partially ordered by the relation $\preceq$ from \eqref{e:PartialOrder}.

\subsubsection{Orderability for contact manifolds and quasi-morphisms}\label{section_orderability} 

There has been a fair amount of research concerning orderability of contact manifolds. Since we are mainly dealing with closed contact manifolds, let us give examples of orderable and non-orderable closed contact manifolds. 
Eliashberg--Kim--Polterovich prove in \cite{EliKimPol06G} that the ideal contact boundary of a sufficiently subcritical Weinstein manifold is not orderable. In particular the standard contact spheres $S^{2d-1}$ are not orderable for $d \geq 2$. 
Cosphere bundles of closed manifolds are known to be orderable \cite{AlbFra13E,CheNem10N,EliKimPol06G,EliPol00P}
and more generally Albers--Merry proved in \cite{AlbMer13O} that Liouville-fillable contact manifolds with nonvanishing 
Rabinowitz Floer homology are orderable.  Using the connection between orderability and contact squeezing developed by Eliashberg--Kim--Polterovich \cite{EliKimPol06G}, Milin \cite{Mil08O} and Sandon \cite{San11E} proved that lens spaces are orderable.

In \cite[Section 1.3.E]{EliPol00P} Eliashberg--Polterovich proved
that $\RP^{2d-1}$ is orderable using Givental's quasimorphism $\mu_{\Giv}$. Their argument works in general and implies the following.
\begin{thm}[{\cite{EliPol00P}}]\label{t:orderability}
	A contact manifold $(V, \xi)$ is orderable if there is 
	a monotone quasi-morphism $\mu$ on $\wt\Cont_0(V,\xi)$.
\end{thm}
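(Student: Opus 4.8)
The plan is to argue by contradiction in a sharp form: if $(V,\xi)$ is not orderable, I will show that the preorder $\preceq$ on $\wt\Cont_0(V,\xi)$ is in fact \emph{trivial} (every element is $\preceq$ every other element), and then observe that no nonzero monotone quasi-morphism can survive this. This is exactly the structure of the argument Eliashberg--Polterovich use for $\RP^{2d-1}$ in \cite[Section 1.3.E]{EliPol00P}, with $\mu_{\Giv}$ replaced by a general monotone $\mu$.

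\emph{Step 1: non-orderability yields a strictly positive contractible loop.} If $\preceq$ is not anti-symmetric there is $\wt\phi\neq\id$ with $\id\preceq\wt\phi\preceq\id$, so $\wt\phi=\wt\phi_a$ and $\wt\phi^{-1}=\wt\phi_b$ for contact Hamiltonians $a,b\geq 0$. The composed contact isotopy $\{\phi^t_a\circ\phi^t_b\}_{t\in[0,1]}$ is then a loop based at $\id$ whose class in $\pi_1(\Cont_0)\subset\wt\Cont_0(V,\xi)$ is $\wt\phi_a\wt\phi_b=\id$, i.e.\ it is contractible, and its generating contact Hamiltonian is nonnegative and not identically zero (otherwise $\{\phi^t_b\}$ would be the inverse isotopy of $\{\phi^t_a\}$, forcing $b\equiv 0$ and $\wt\phi=\id$). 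By the thickening construction of \cite{EliPol00P} this can be upgraded to a contractible loop $\{\phi^t_h\}_{t\in[0,1]}$ generated by a \emph{strictly positive} contact Hamiltonian; normalize $h$ to be $1$-periodic in time with $\varepsilon:=\min h>0$.

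\emph{Step 2: the loop absorbs every element.} For each $k\in\N$, traversing this loop $k$ times produces a contractible loop, so its class is still $\id$; its generating contact Hamiltonian $g^{(k)}$ satisfies $g^{(k)}_t\geq k\varepsilon$ everywhere. Hence $\wt\phi_{g^{(k)}}=\id$ for all $k$. Now fix an arbitrary $\wt\phi_f\in\wt\Cont_0(V,\xi)$ and put $M=\max|f|<\infty$. Then $\wt\phi_f=\wt\phi_f\cdot\wt\phi_{g^{(k)}}$ is represented by the composed contact isotopy $\{\phi^t_f\circ\phi^t_{g^{(k)}}\}$, whose contact Hamiltonian has the form $c^{(k)}_t=f_t+\lambda^{(f)}_t\cdot\big(g^{(k)}_t\circ(\phi^t_f)^{-1}\big)$, where $\lambda^{(f)}_t>0$ is assembled from the conformal factors of the fixed isotopy $\{\phi^t_f\}$. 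By compactness of $V$ and $[0,1]$ there is $\delta>0$ with $\lambda^{(f)}_t\geq\delta$ for all $t$, so $c^{(k)}_t\geq -M+\delta k\varepsilon>0$ once $k>M/(\delta\varepsilon)$. For such $k$, $\wt\phi_f=\wt\phi_{c^{(k)}}$ with $c^{(k)}>0$, hence $\id\preceq\wt\phi_f$. Applying this to $\wt\phi_f^{-1}$ and using bi-invariance of $\preceq$ gives $\wt\phi_f\preceq\id$ as well. Thus $\id\preceq\wt\psi\preceq\id$ for \emph{every} $\wt\psi\in\wt\Cont_0(V,\xi)$.

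\emph{Step 3: conclusion.} Monotonicity of $\mu$ applied to $\id\preceq\wt\psi\preceq\id$ gives $\mu(\id)\leq\mu(\wt\psi)\leq\mu(\id)$, and since $\mu(\id)=0$ for a homogeneous quasi-morphism, $\mu\equiv 0$ on $\wt\Cont_0(V,\xi)$, contradicting the standing assumption that quasi-morphisms are nonzero. Therefore $\preceq$ is anti-symmetric and $(V,\xi)$ is orderable. The only non-formal ingredient is the strict positivity in Step 1: it is exactly what allows the iterated loop $g^{(k)}$, with arbitrarily large lower bound $k\varepsilon$, to dominate the bounded perturbation $f$ in the composition formula; obtaining it from mere non-orderability is the content of \cite{EliPol00P}. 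The composition formula for contact Hamiltonians and the uniform positive lower bound on conformal factors of a fixed isotopy are routine.
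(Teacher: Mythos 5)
Your argument is correct, but it is organized differently from the paper's, which is a two-line affair: the paper invokes \cite[Criterion 1.2.C]{EliPol00P} in its ``sufficient condition'' form (to get orderability it is enough that $\id \neq \wt\phi_h$ for every strictly positive $h$) and then finishes immediately with Proposition~\ref{p:Basics}(iv), i.e.\ the fact that a nonzero monotone homogeneous $\mu$ satisfies $\mu(\wt\phi_h)>0$ for $h>0$, so $\wt\phi_h$ cannot equal $\id$. You instead run the contrapositive: from non-orderability you extract (via the same Eliashberg--Polterovich input, your Step 1 being exactly the nontrivial direction of Criterion 1.2.C) a contractible loop with strictly positive Hamiltonian, and then your Step 2 re-proves a piece of the Eliashberg--Polterovich machinery -- that such a loop ``absorbs'' every element, trivializing $\preceq$ -- after which monotonicity alone forces $\mu\equiv 0$. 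What your route buys is that you never need the strict positivity statement $\mu(\wt\phi_h)>0$ (superheaviness of $V$); what it costs is the extra absorption argument with the iterated loop and the composition formula, which the paper avoids entirely by letting the criterion do that work. Both proofs rest on the same external result from \cite{EliPol00P}, so the difference is one of packaging rather than of essential ideas; your Step 2 is sound (the conformal factors of the fixed isotopy are bounded below on the compact $[0,1]\times V$, so $c^{(k)}>0$ for large $k$), and Step 3 is immediate.
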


\begin{proof}
	By \cite[Criterion 1.2.C]{EliPol00P} to prove $(V, \xi)$
	is orderable it suffices to prove $\id \not= \wt\phi_{h}$ in $\wt{\Cont}_{0}(V, \xi)$ 
	for any contact Hamiltonian with $h > 0$ on $[0,1] \times V$.  Since $\mu(\id) = 0$, we are done
	because for any such contact Hamiltonian $\mu(\wt{\phi}_{h}) > 0$ by Proposition~\ref{p:Basics}(iv).
\end{proof}

\begin{coroll}\label{coroll_orderable}
The contact manifolds $(\wh{M}, \xi)$ in Theorem~\ref{thm_main} are orderable. \qed
\end{coroll}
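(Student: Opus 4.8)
The plan is simply to combine Theorem~\ref{thm_main} with Theorem~\ref{t:orderability}. First I would invoke Theorem~\ref{thm_main}, which for each closed monotone even toric symplectic manifold $(M,\w,\T)$ produces a prequantization $(\wh{M},\xi,\alpha)$ together with a quasi-morphism $\mu\fc \wt\Cont_{0}(\wh{M},\xi)\to\R$ that is monotone (recall that, by the running convention of Section~\ref{section_qms}, ``quasi-morphism'' means non-zero and homogeneous). Then I would feed this $\mu$ into Theorem~\ref{t:orderability}, whose sole hypothesis is the existence of a monotone quasi-morphism on $\wt\Cont_{0}$ of the contact manifold in question and whose conclusion is that the manifold is orderable. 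Applying this to $(\wh{M},\xi)$ yields the statement, and the appended \texttt{\textbackslash qed} reflects that nothing further is needed.

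There is essentially no obstacle: the substantive content has already been discharged, on one side in the proof of Theorem~\ref{thm_main} (the construction of $\mu$ by contact reduction, via Theorem~\ref{thm_ART}, applied to Givental's quasi-morphism on $\wt\Cont_{0}(\RP^{2d-1})$) and on the other in the proof of Theorem~\ref{t:orderability} (Eliashberg--Polterovich's criterion \cite[Criterion 1.2.C]{EliPol00P}, which reduces orderability to showing $\id\neq\wt\phi_h$ for every positive contact Hamiltonian $h$, and this follows from $\mu(\id)=0$ together with $\mu(\wt\phi_h)>0$ by Proposition~\ref{p:Basics}(iv)). The one point worth flagging is that orderability is obtained precisely for the prequantization $(\wh{M},\xi,\alpha)$ supplied by Theorem~\ref{thm_main}, not for an arbitrary prequantization of $(M,\w)$: as noted in the discussion following Theorem~\ref{thm_main}, the standard contact sphere $S^{2d-1}$ is a prequantization of the even toric manifold $\CP^{d-1}$ yet is not orderable for $d\geq 2$, so the choice of prequantization genuinely matters.
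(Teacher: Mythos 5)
Your proposal is correct and is exactly the paper's (implicit) argument: the corollary is stated with a \qed precisely because it follows immediately by applying Theorem~\ref{t:orderability} to the monotone quasi-morphism on $\wt\Cont_{0}(\wh{M},\xi)$ furnished by Theorem~\ref{thm_main}. Your remark that orderability is asserted only for the particular prequantization constructed there, not for arbitrary prequantizations of $(M,\w)$, matches the paper's own caveat.
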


\noindent
Recall that the contact manifolds $(\wh{M}, \xi)$ are obtained from contact reduction of $\RP^{2d-1}$, which is of course orderable.  
It would be interesting to prove Corollary~\ref{coroll_orderable} directly, that is to prove orderability persists under contact reduction.  

By Theorem~\ref{t:orderability}, orderability is a necessary condition for the existence of a non-zero homogeneous monotone quasi-morphism on $\wt{\Cont}_{0}(V)$.  However in general the converse is not well understood and potentially is a delicate question, which
we will illustrate with the following examples regarding $\R^{2n} \times S^1$ and its group of compactly supported contactomorphisms $\Cont_{0}^{c}(\R^{2n} \times S^1)$, where the contact form is $\alpha_{\std} + dt$ and $dt$ is the angular form on $S^1 = \R/\Z$.

\begin{exam}
	Sandon has proved \cite{San10A, San11C} that $\R^{2n} \times S^1$ is orderable, that is the Eliashberg--Polterovich relation \eqref{e:PartialOrder} is indeed a partial order on the group $\wt\Cont{}_0^c(\R^{2n} \times S^1)$, and also proved it induces a
	partial order on $\Cont_{0}^{c}(\R^{2n} \times S^{1})$.
	However $\Cont_{0}^{c}(\R^{2n} \times S^1)$ admits no non-zero homogeneous quasi-morphisms,
	due to a general argument of Kotschick \cite[Theorem 4.2]{Kot08S}
	and the fact that $\Cont_{0}^{c}$ is always perfect due to Rybicki \cite{Rybicki_comms_contactomorphisms}.
\end{exam}

\begin{exam}\label{e:UCPerfect}
	This example rests on the speculation that $\wt{\Cont}{}_{0}^{c}$ is in general perfect.
	In this case again Kotschick's argument proves $\wt{\Cont}{}_{0}^{c}(\R^{2n} \times S^1)$ admits no non-zero 
	homogeneous quasi-morphisms, despite $\preceq$ being a partial order on  $\wt{\Cont}{}_{0}^{c}(\R^{2n} \times S^1)$.
\end{exam}

\begin{exam}
	Consider now the domain $B^{2n}_{R} \times S^1$ where 
	$$B^{2n}_{R} := \{z \in \C^{n} \mid \pi \abs{z}^2 < R\}.$$
	Since $\R^{2n} \times S^1$ is contactomorphic to $B^{2n}_{1} \times S^1$ by \cite[Proposition 1.24]{EliKimPol06G},
	Example~\ref{e:UCPerfect} indicates $\wt{\Cont}{}_{0}^{c}(B^{2n}_1 \times S^1)$ 
	does not admit a non-zero homogeneous quasi-morphism.
	
	On the other hand, $\wt{\Cont}{}_{0}^{c}(B^{2n}_R \times S^1)$ admits a non-zero homogeneous quasi-morphism
	whenever $\tfrac{2n}{n+1} < R < 2$.  When $R < 2$ we have the contact embedding
	\begin{equation}\label{embedding}
		\Phi\fc B^{2n}_R \times S^1 \to \RP^{2n+1} \quad\mbox{by}\quad (z,t) \mapsto 
		e^{\pi i t} \sqrt{\tfrac{n+1}{2}} \Big(z, \sqrt{\tfrac{2}{\pi} - \abs{z}^2}\Big)
	\end{equation}
	written as a map to $S^{2n+1} = \{z \in \C^{n+1} \mid \pi \abs{z}^2 = n+1\}$ from \eqref{e:SphereGamma}; where $t\in[0,1)$.
	When $R > \tfrac{2n}{n+1}$, one can check the image $\Phi(B^{2n}_R)$ contains
	the $\mu_{\Giv}$-superheavy torus $T_{\RP} \subset \RP^{2n+1}$ from Lemma~\ref{exam_torus_RP_superheavy}
	and hence one can use $\Phi$ to pull-back $\mu_{\Giv}$ to a non-zero homogeneous quasi-morphism on 
	$\wt{\Cont}{}_{0}^{c}(B^{2n}_{R} \times S^1)$.
\end{exam}

The reader is also referred to Ben Simon and Hartnick's work \cite{BenSimon_Hartnick_Quasi_total_orders_translation_numbers, BenHar12R} regarding a general connection between quasi-morphisms and partial orders.

\subsubsection{Sandon-type metric}

In \cite{San10A} Sandon introduced an unbounded integer-valued conjugation-invariant norm on $\Cont_{0}^{c}(\R^{2n}\times S^{1})$,
the identity component of the group of compactly supported contactomorphisms of $\R^{2n} \times S^{1}$, and such norms have
been further studied in \cite{AlbMer13O, ColSan12T, FraPolRos12O, Zap13G}.  In what follows we will consider the norm $\nu$
defined in \cite{FraPolRos12O}, whose definition we will now recall.

Consider any orderable contact manifold $(V, \xi)$ for which there is a positive contact Hamiltonian $f > 0$ such that
$\wt\phi_{f}$ is in the center of $\wt\Cont_{0}(V)$.  Examples of this are given by orderable contact manifolds with a periodic Reeb flow,
for instance $\RP^{2d-1}$ or any of the contact manifolds $\wh{M}$ from Theorem~\ref{thm_main}.
The functionals on $\wt\Cont_{0}(V)$
$$
	\nu_{-}(\wt\psi) := \max\big\{n \in \Z \,|\, \wt\phi_{f}^{n} \preceq \wt\psi\big\}
	\quad\mbox{and}\quad	
	\nu_{+}(\wt\psi) := \min\big\{n \in \Z \,|\, \wt\psi \preceq \wt\phi_{f}^{n}\big\}
$$
are conjugation-invariant, since $\wt\phi_{f}$ is in the center of $\wt\Cont_{0}(V)$, 
and
$$
	\nu\fc \wt{\Cont}_{0}(V) \to \Z \quad\mbox{where}\quad
	\nu(\wt\phi):= \max\big\{ \big|\nu_{+}(\wt\phi)\big|, \big|\nu_{-}(\wt\phi)\big| \big\},
$$
defines a conjugation-invariant norm, by \cite[Theorem 2.4]{FraPolRos12O}.
Using $\wt\phi_{f}$ is generated by a strictly positive
contact Hamiltonian, it is easy to see from \cite[Criterion 1.2.C]{EliPol00P} that
$\nu(\wt\phi_{f}^{n}) = \abs{n}$ for any $n \in \Z$ and hence $\nu$ is stably unbounded.
This norm is related to monotone quasi-morphisms on $\wt{\Cont}_{0}(V)$ as follows:
\begin{lemma}
	If $\mu\fc \wt{\Cont}_{0}(V) \to \R$ is a monotone quasi-morphism, then
	$$
		\big|\mu(\wt\psi)\big| \leq \mu(\wt{\phi}_{f})\, \nu(\wt{\psi})\,.
	$$
\end{lemma}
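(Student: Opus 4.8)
The plan is to sandwich $\wt\psi$ between powers of $\wt\phi_f$ in the relation $\preceq$ and then apply monotonicity and homogeneity of $\mu$. First I would record the two defining inequalities for $\nu_{\pm}$: since $\nu$ is a genuine $\Z$-valued norm by \cite[Theorem~2.4]{FraPolRos12O}, the integers $\nu_-(\wt\psi)$ and $\nu_+(\wt\psi)$ are finite, and directly from their definitions
\[
	\wt\phi_f^{\,\nu_-(\wt\psi)} \preceq \wt\psi \preceq \wt\phi_f^{\,\nu_+(\wt\psi)}\,.
\]

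Next, I would apply monotonicity of $\mu$ to both inequalities and use that $\mu$ is homogeneous, so that $\mu(\wt\phi_f^{\,n}) = n\,\mu(\wt\phi_f)$ for all $n \in \Z$. This gives
\[
	\nu_-(\wt\psi)\,\mu(\wt\phi_f) \;\le\; \mu(\wt\psi) \;\le\; \nu_+(\wt\psi)\,\mu(\wt\phi_f)\,.
\]
At this point I would invoke $\mu(\wt\phi_f) > 0$, which holds because $f > 0$, exactly as in the proof of Theorem~\ref{t:orderability} (using Proposition~\ref{p:Basics}(iv)). Dividing through by $\mu(\wt\phi_f)$ we obtain $\nu_-(\wt\psi) \le \mu(\wt\psi)/\mu(\wt\phi_f) \le \nu_+(\wt\psi)$.

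Finally I would observe that any real number $x$ with $\nu_-(\wt\psi) \le x \le \nu_+(\wt\psi)$ satisfies $|x| \le \max\{|\nu_-(\wt\psi)|,|\nu_+(\wt\psi)|\}$: if $x \ge 0$ then $x \le \nu_+(\wt\psi) \le |\nu_+(\wt\psi)|$, while if $x < 0$ then $|x| = -x \le -\nu_-(\wt\psi) \le |\nu_-(\wt\psi)|$. Applying this with $x = \mu(\wt\psi)/\mu(\wt\phi_f)$ and recalling that $\nu(\wt\psi) = \max\{|\nu_+(\wt\psi)|,|\nu_-(\wt\psi)|\}$ yields the claimed bound $|\mu(\wt\psi)| \le \mu(\wt\phi_f)\,\nu(\wt\psi)$. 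I do not expect any genuine obstacle here: the only points needing care are the finiteness of $\nu_{\pm}(\wt\psi)$, which is part of $\nu$ being well defined, and the strict positivity $\mu(\wt\phi_f) > 0$, both of which are already in hand.
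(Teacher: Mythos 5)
Your argument is correct and is essentially the paper's own proof: both sandwich $\mu(\wt\psi)$ between $\nu_-(\wt\psi)\,\mu(\wt\phi_f)$ and $\nu_+(\wt\psi)\,\mu(\wt\phi_f)$ using the definitions of $\nu_\pm$ together with monotonicity and homogeneity of $\mu$, and then use $\mu(\wt\phi_f)>0$. You merely spell out the final elementary step (bounding by the maximum of $|\nu_\pm|$) that the paper leaves implicit.
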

\noindent Note that $\mu(\wt{\phi}_{f}) > 0$ since $\mu \neq 0$.
\begin{proof}
	By the definition of $\nu_{\pm}$ and the fact that $\mu$ is monotone and homogeneous we have
	$$
		\nu_{-}(\wt{\psi})\, \mu(\wt{\phi}_{f}) \leq \mu(\wt{\psi}) \leq \nu_{+}(\wt{\psi})\, \mu(\wt{\phi}_{f})
	$$
	from which the result follows.
\end{proof}

Next we show that the above norm is unbounded on subgroups of $\wt\Cont_0(V)$ associated to certain open subsets. For an open subset $U \subset V$ we let $\wt\Cont_0(U) \subset \wt\Cont_0(V)$ be the subgroup consisting of elements $\wt\phi_h$ where
the Hamiltonian $h$ has compact support contained in $U$.
\begin{thm}
If $U\subset V$ is an open subset containing a $\mu$-superheavy subset, then there is $\wt\psi \in \wt\Cont_0(U)$ with 
$$\lim_{n \to \infty} \frac{\nu(\wt\psi^n)}{n} > 0$$
that is $\nu$ is stably unbounded on $\wt\Cont_0(U)$.
\end{thm}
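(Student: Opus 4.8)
The plan is to reduce the claim to producing a single element of $\wt\Cont_0(U)$ on which $\mu$ does not vanish, and then to extract such an element from the $\mu$-superheavy subset sitting inside $U$. First I would invoke the lemma immediately preceding this theorem: for every $\wt\psi \in \wt\Cont_0(V)$ it gives $|\mu(\wt\psi)| \le \mu(\wt\phi_f)\,\nu(\wt\psi)$. Applying this to $\wt\psi^n$ and using homogeneity of $\mu$ yields
\[
	n\,|\mu(\wt\psi)| = |\mu(\wt\psi^n)| \le \mu(\wt\phi_f)\,\nu(\wt\psi^n),
\]
so $\nu(\wt\psi^n)/n \ge |\mu(\wt\psi)|/\mu(\wt\phi_f)$ for all $n \ge 1$. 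Since $\nu$ is a conjugation-invariant norm, the sequence $n \mapsto \nu(\wt\psi^n)$ is subadditive, so $\lim_{n\to\infty}\nu(\wt\psi^n)/n$ exists by Fekete's lemma. As $\mu(\wt\phi_f) > 0$ because $\mu \neq 0$, it therefore suffices to exhibit $\wt\psi \in \wt\Cont_0(U)$ with $\mu(\wt\psi) \neq 0$, for then automatically $\lim_{n\to\infty}\nu(\wt\psi^n)/n \ge |\mu(\wt\psi)|/\mu(\wt\phi_f) > 0$.

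To produce such a $\wt\psi$, I would let $Y \subset U$ be a $\mu$-superheavy closed subset. Since $V$ is closed, $Y$ is compact, so by a standard smooth Urysohn argument one can choose $h \in C^\infty(V)$ with $0 \le h \le 1$, $h \equiv 1$ on a neighborhood of $Y$, and with $\supp h$ a compact subset of $U$ (this uses interposing an open set $W$ with $Y \subset W \subset \ol W \subset U$). Then $h$ is an autonomous contact Hamiltonian with $h|_Y > 0$, so $\mu(\wt\phi_h) > 0$ by the definition of $\mu$-superheaviness, while $\supp h$ being a compact subset of $U$ means $\wt\phi_h \in \wt\Cont_0(U)$ by definition of the latter subgroup. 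Setting $\wt\psi := \wt\phi_h$ then completes the argument.

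I do not expect a serious obstacle here: once the preceding lemma is in hand the proof is essentially bookkeeping, combining homogeneity of $\mu$, positivity of $\mu(\wt\phi_f)$, and subadditivity of $\nu$ (which gives the existence of the stable norm). The only point requiring a little care is that the cutoff $h$ be genuinely compactly supported in $U$ while remaining positive on the closed set $Y$, which is exactly why one passes through the intermediate open set $W$, using compactness of $Y$ and normality of $V$.
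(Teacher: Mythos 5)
Your proposal is correct and follows essentially the same route as the paper: apply the preceding lemma (here to powers $\wt\psi^n$, together with homogeneity of $\mu$) to reduce to finding $\wt\psi\in\wt\Cont_0(U)$ with $\mu(\wt\psi)\neq 0$, and then take $\wt\psi=\wt\phi_h$ for a Hamiltonian $h$ compactly supported in $U$ and positive on the superheavy subset, which gives $\mu(\wt\phi_h)>0$ by the definition of superheaviness. The extra details you supply (the Urysohn cutoff and Fekete's lemma) are just fleshing out steps the paper leaves implicit.
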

\begin{proof}
By the above lemma we have
$$\nu(\wt\psi) \geq \frac{|\mu(\wt\psi)|}{\mu(\wt\phi_f)}\,,$$
therefore it suffices to produce an element $\wt\psi \in \wt\Cont_0(U)$ with $\mu(\wt\psi)\neq 0$. If $h$ is such that the restriction of $h$ to the superheavy subset is positive and $\supp (h) \subset U$, then since by definition $\mu(\wt\phi_h) > 0$, we are done.
\end{proof}

Colin--Sandon in \cite{ColSan12T} used the notion of a discriminant point
to define a non-degenerate bi-invariant metric on $\wt\Cont_{0}(V, \xi)$ for any contact manifold, which they called the discriminant metric.  
Using the relation between Givental's quasi-morphism $\mu_{\Giv}$ with discriminant points, 
see Section~\ref{s:discriminant} for more on this, Colin--Sandon were able to show the discriminant metric is stably unbounded on
$\wt{\Cont}_0(\RP^{2d-1})$.  It would be interesting to determine if the quasi-morphism $\mu\fc \wt{\Cont}_0(\wh{M}) \to \R$
we built in Theorem~\ref{thm_main} can also be used to show the discriminant metric on $\wt{\Cont}_0(\wh{M})$ is stably unbounded.


\subsection{Examples of even monotone polytopes}\label{s:EvenExample}

Moment polytopes corresponding to closed monotone symplectic toric manifolds are known as smooth Fano polytopes.  They
have been classified by hand up to dimension $4$ in \cite{Bat81T,Bat99O,Sat00T,WatWat82T} and
there is an algorithm in \cite{Obr07A} for higher dimensions.  We will now give various examples of even smooth Fano polytopes
in $\R^{n}$ and their corresponding symplectic toric manifolds.  For the polytopes we
will just list the interior conormals
$\{\nu_{j}\} \in \Z^{n}$ where $\{\epsilon_{1}, \dots, \epsilon_{n}\}$ is the standard basis.

The first example is $\CP^{n}$ with conormals 
$\{\epsilon_{1}, \dots, \epsilon_{n}, -(\epsilon_{1} + \cdots + \epsilon_{n})\}$
and in dimension two there are 
$$
\mbox{$\CP^{2}$\,,\,\, $\CP^{1} \times \CP^{1}$\,,\,\, $\CP^{2}\#3\ol{\CP}^{2}$}
$$
where the last one has conormals $\{\pm\epsilon_{1}, \pm\epsilon_{2}, \pm(\epsilon_{1}+\epsilon_{2})\}$.
In dimension three there are $18$ smooth Fano polytopes by the classification \cite{Bat81T,WatWat82T}
and $8$ are even.  Four are basic 
$$
\mbox{$\CP^{3}$\,,\,\, $\CP^{1} \times \CP^{2}$\,,\,\, $(\CP^{1})^{3}$\,,\,\, $\CP^{1} \times (\CP^{2}\#3\ol{\CP}^{2}$)}
$$
and the remaining four have the structure of toric bundles \cite[Definition 3.10]{McDTol10P}:
\begin{figure}[h]
   \centering
   \def\svgwidth{350pt}
   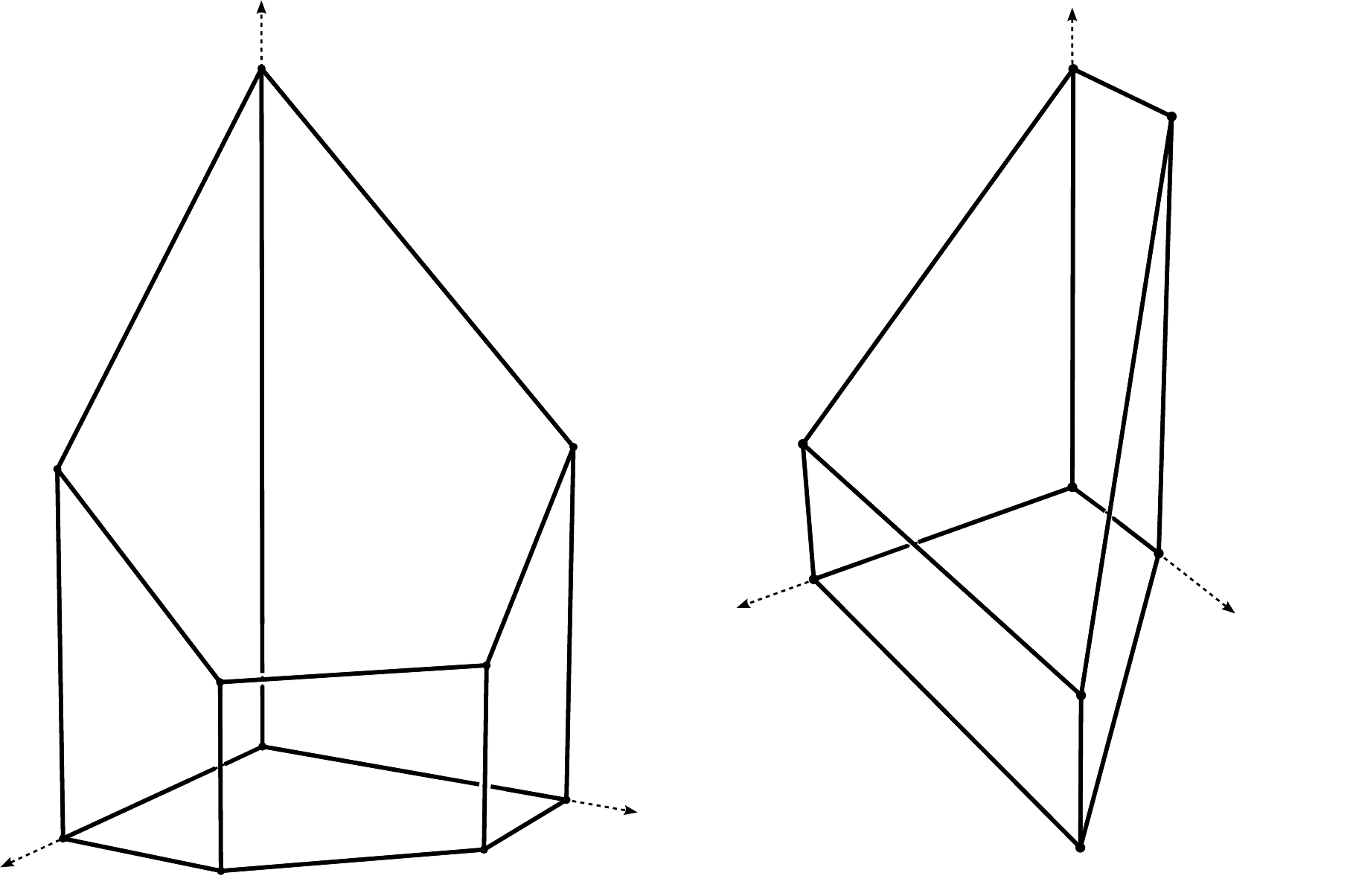
   \caption{Left: The polytope for the $(\CP^{2}\#2\ol{\CP}^{2})$-bundle over $\CP^{1}$ in (ii).  Right: The
    polytope for the $\CP^{1}$-bundle $\bP(\C \oplus \cO(1,-1))$ over $\CP^{1} \times \CP^{1}$ in (iv).}
  \label{f:poly}
\end{figure} 
\begin{enumerate}
	\item the $\CP^{1}$-bundle $\bP(\C\oplus\cO(2))$ over $\CP^{2}$ with conormals
	$$
		\{\pm \epsilon_{1}\,,\,\, \epsilon_{2}\,,\,\, \epsilon_{3}\,,\,\, 2\epsilon_{1} - \epsilon_{2}-\epsilon_{3}\}
	$$
	\item the $(\CP^{2}\#2\ol{\CP}^{2})$-bundle $F_{3}^{4}$ (in the notation of \cite{WatWat82T}) over $\CP^{1}$ with conormals
	$$
		\{\pm \epsilon_{1}\,,\,\, \pm\epsilon_{2}\,,\,\, -\epsilon_{1}-\epsilon_{2}\,,\,\,\epsilon_{3}\,,\,\,
		-\epsilon_{1}-\epsilon_{2} - \epsilon_{3}\}
	$$
	\item the $\CP^{1}$-bundle $\bP(\C\oplus \cO(1,1))$ over $\CP^{1} \times \CP^{1}$ with conormals
	$$
		\{\pm\epsilon_{1}\,,\,\,\epsilon_{2}\,,\,\,\epsilon_{3}\,,\,\, \epsilon_{1} -\epsilon_{2}\,,\,\,\epsilon_{1}-\epsilon_{3}\}
	$$
	\item and the $\CP^{1}$-bundle $\bP(\C \oplus \cO(1,-1))$ over $\CP^{1} \times \CP^{1}$ with conormals
	$$
		\{ \pm\epsilon_{1}\,,\,\, \epsilon_{2}\,,\,\, \epsilon_{3}\,,\,\,\epsilon_{1} - \epsilon_{2}\,,\,\,-\epsilon_{1} -\epsilon_{3}\}.
	$$
\end{enumerate}
The example in (i) generalizes to the $\CP^{1}$-bundles $\bP(\C\oplus\cO(2k))$ over $\CP^{n}$
where $0 \leq 2k \leq n$.  See Figure~\ref{f:poly}
for the polytopes from (ii) and (iv).

\subsection*{Acknowledgements}

We are very grateful to Leonid Polterovich for his wonderful guidance and support with this paper. We thank Miguel Abreu, Alexander Givental, and Yael Karshon for helpful conversations and Milena Pabiniak, Sheila Sandon, and Egor Shelukhin for their comments on the manuscript. We also wish to thank the organizers and participants of the workshop ``From Conservative Dynamics to Symplectic and Contact Topology''
at the Lorentz Center (Leiden), the Geometry Seminar at Universit\'e Lyon 1 (Lyon), and the Geometry and Topology seminars at the University of Haifa and The Technion (Haifa), for the opportunity to present preliminary results of this work. Work on this paper began while visiting the Lorentz Center (Leiden) and our meetings at Tel Aviv University were instrumental in the completion of this project. 
The first named author thanks Leonid Polterovich and Yaron Ostrover for their warm hospitality during his stay at Tel Aviv University.
Part of the writing was completed while the second named author was a postdoc at LMU (Munich), and he would like to acknowledge the nourishing research atmosphere and hospitality of this university.  Finally we would like to thank the referee for their helpful comments,  suggestions, and corrections.



\section{Proof of the main theorem}\label{s:ProofMainThm}

In this section we will present the proof of Theorem \ref{thm_main}. 
Section \ref{s:BuildPrequantization} contains the construction of the prequantization $\pi\fc(\wh M, \alpha) \to (M, \w)$ and 
Section \ref{s:ApplyingART} builds the announced quasi-morphisms $\mu\fc\wt\Cont_0(\wh M) \to \R$ using Theorem~\ref{thm_ART}.

\subsection{Constructing the family of contact manifolds}\label{s:BuildPrequantization}

The goal of this subsection is to present the construction of a prequantization $(\wh M,\xi,\alpha)$ for an even closed monotone toric symplectic manifold $(M,\omega)$ with moment polytope
\begin{equation}\label{e:Polytope1}
	\Delta = \{x \in \mf{t}^{*} \,|\, \ip{x, \nu_{j}} + 1 \geq 0 \mbox{ for $j=1, \dots, d$}\}
\end{equation}
as in \eqref{e:Polytope} where $\nu_{j} \in \mf{t}_{\Z}$ are primitive vectors and each one defines a different facet of the polytope $\Delta$.
The polytope $\Delta$ is compact and smooth, meaning each $k$-codimensional face of $\Delta$ is the intersection of exactly
$k$ facets and the $k$ associated conormals $\{\nu_{l_1}, \dots, \nu_{l_{k}}\}$ can be extended to an integer basis for the lattice 
$\mf{t}_{\Z}$.
In \eqref{e:Polytope1} we have used the normalization $[\w] = c_{1}(M)$ since $(M, \w)$ is monotone and
scaling the polytope $\Delta$ is equivalent to scaling $\w$.

\subsubsection{The standard toric structure on $\C^{d}$ and Delzant's construction}

Let us briefly recall the standard toric structure on $(\C^{d}, \w_{\std} = dx \wedge dy)$.
The action of $\T^{d} = \R^{d}/\Z^{d}$ on $\C^{d}$, which rotates each coordinate, is induced by the moment map
$$
	P\fc \C^{d} \to \R^{d*} \quad\mbox{where}\quad \ip{\lambda, P}(z) = \pi \sum_{j=1}^{d} \lambda_{j}\abs{z_{j}}^{2}
	\quad\mbox{for $\lambda = (\lambda_{1}, \ldots, \lambda_{d}) \in \R^{d}$}.
$$
Indeed for $\lambda \in \R^{d}$, the vector field 
\begin{equation}\label{e:Xvector}
	X_{\lambda}(z) = 2\pi i (\lambda_{1}z_{1}, \dots, \lambda_{d}z_{d}) \in \C^{d} = T_{z}\C^{d}
\end{equation}
is the Hamiltonian vector field for the function $\ip{\lambda, P}\fc \C^{d} \to \R$ and it
gives the infinitesimal action of $\lambda$ on $\C^{d}$.
Observe for the $1$-form
\begin{equation}\label{e:Alpha}
\a_{\std} = \tfrac{1}{2} \sum_{j=1}^{d} (x_{j}\,dy_{j} - y_{j}\,dx_{j})
\end{equation}
where $d\a_{\std} = \w_{\std}$ one has
\begin{equation}\label{e:AlphaStuff}
	\a_{\std}(X_{\lambda}) = \ip{\lambda, P}
	\quad\mbox{and}\quad
	\iota_{X_{\lambda}}d\a_{\std} = \iota_{X_{\lambda}}\w_{\std}= -d\ip{\lambda, P}.
\end{equation}


Delzant in \cite{Del88H} gave a way to reconstruct a closed symplectic toric manifold from its moment polytope using symplectic reduction of $\C^{d}$, which we will now recall in the case of the polytope $\Delta$ in \eqref{e:Polytope1}.
Define the surjective linear map 
$$
	\beta_{\Delta}\fc\R^{d} \to \mf{t} \quad\mbox{by}\quad \epsilon_{j} \mapsto \nu_{j} \quad\mbox{for $j=1, \dots, d$}
$$ 
where $\{\epsilon_{j}\}_{j=1}^{d}$ are the standard basis vectors of $\R^{d}$ and $\nu_{j} \in \mf{t}_{\Z}$ are 
conormals in \eqref{e:Polytope1}.  Since $\Delta$ is compact and smooth, we know $\beta_{\Delta}(\Z^d) = \mf{t}_\Z$,
and so we can define the connected subtorus 
\begin{equation}\label{e:defK}
	\bK \leq \T^{d} \quad\mbox{to be the kernel of the induced map $[\beta_{\Delta}]\fc \T^d \to \T$}
\end{equation}
with Lie algebra 
\begin{equation}\label{e:Kernel}
	\mf{k}:= \ker(\beta_{\Delta}\fc \R^d \to \mf{t}).
\end{equation}
If $\iota^{*}\fc \R^{d*} \to \mf{k}^{*}$ is dual to the inclusion $\mf{k} \subset \R^{d}$, then
the action of $\bK$ on $\C^{d}$ has
$$
	P_{\bK} := \iota^{*} \circ P \fc \C^{d} \to \mf{k}^{*}
$$
for its moment map. The torus $\bK$ acts freely on the regular level set
\begin{equation}\label{e:LevelSet}
	P_{\bK}^{-1}(c) \subset \C^{d} \quad \mbox{where} \quad c:= \iota^{*}(1, \dots, 1) \in \mf{k}^{*}
\end{equation}
and for $\lambda \in \mf{k}$ it follows from \eqref{e:AlphaStuff} that $(\cL_{X_{\lambda}}\w_{\std})|_{P_{\bK}^{-1}(c)} = 0$. 
Therefore symplectic reduction gives a symplectic manifold $(M_{\Delta}, \w_{\Delta})$ where
\begin{equation}\label{e:Mdelta}
	M_{\Delta} := P_{\bK}^{-1}(c)/\bK \quad\mbox{and the symplectic form $\w_{\Delta}$ is induced from $\w_{\std}|_{P_{\bK}^{-1}(c)}$}\,.
\end{equation}
It follows from Delzant's theorem \cite{Del88H} that $(M_{\Delta}, \w_{\Delta})$ and $(M, \w)$ are equivariantly symplectomorphic
as toric manifolds.

The following lemma that shows the significance of the assumption that $\Delta$ is an even moment polytope.
\begin{lemma}\label{l:Even}
	Let $\tau \in \T^{d}$ be the element such that $\tau \cdot z = -z$ for $z \in \C^{d}$.
	The torus $\bK$ from \eqref{e:defK} contains the element $\tau$ if and only if $\Delta$ is even.
\end{lemma}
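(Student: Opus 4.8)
The plan is to simply unwind the definitions, since the statement is really a computation about the map $[\beta_\Delta]\colon \T^d \to \T$ evaluated at one specific element. First I would identify $\tau$ explicitly: by the formula \eqref{e:Xvector} for the infinitesimal action, the torus $\T^d = \R^d/\Z^d$ acts on $\C^d$ by $t\cdot z = (e^{2\pi i t_1}z_1,\dots,e^{2\pi i t_d}z_d)$, so the condition $\tau\cdot z = -z$ forces $\tau$ to be the class in $\T^d$ of the vector $s := \tfrac12(1,\dots,1)\in\R^d$.

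Next I would use that $\tau\in\bK$ if and only if $[\beta_\Delta](\tau) = 0$ in $\T = \mathfrak t/\mathfrak t_\Z$, by the definition \eqref{e:defK} of $\bK$. Since $[\beta_\Delta]$ is the map induced on quotients by $\beta_\Delta\colon\R^d\to\mathfrak t$ (well-defined because $\beta_\Delta(\Z^d) = \mathfrak t_\Z$, as $\Delta$ is compact and smooth), we have $[\beta_\Delta](\tau) = 0$ in $\mathfrak t/\mathfrak t_\Z$ precisely when $\beta_\Delta(s)\in\mathfrak t_\Z$. Now $\beta_\Delta(s) = \tfrac12\sum_{j=1}^d \nu_j$ by the definition $\epsilon_j\mapsto\nu_j$, so $\tau\in\bK$ is equivalent to $\tfrac12\sum_{j=1}^d\nu_j\in\mathfrak t_\Z$, i.e.\ to $\sum_{j=1}^d\nu_j\in 2\mathfrak t_\Z$, which is exactly the definition of $\Delta$ being even.

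I do not expect any genuine obstacle here; the whole content is the correct identification of the half-rotation element $\tau$ with the class of $\tfrac12(1,\dots,1)$ and the compatibility of $\beta_\Delta$ with passing to the torus quotients. The only point to state carefully is that $[\beta_\Delta]$ is well-defined on $\T^d$, which was already recorded in the construction preceding \eqref{e:defK}.
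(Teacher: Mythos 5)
Your proposal is correct and is essentially the paper's own argument: identify $\tau$ with the class of $\tfrac12(1,\dots,1)$ in $\T^d=\R^d/\Z^d$ and observe, via the definition \eqref{e:defK} of $\bK$ as $\ker[\beta_\Delta]$ with $\T=\mft/\mft_\Z$, that $\tau\in\bK$ if and only if $\tfrac12\sum_{j=1}^d\nu_j\in\mft_\Z$, i.e.\ $\Delta$ is even. You merely spell out the well-definedness of $[\beta_\Delta]$ (from $\beta_\Delta(\Z^d)=\mft_\Z$) a bit more explicitly than the paper does.
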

\begin{proof}
	Note that $\tau = [\tfrac{1}{2}, \dots, \tfrac{1}{2}]$ in $\T^d = \R^d / \Z^d$ and therefore 
	since $\T = \mf{t}/\mf{t}_{\Z}$ it is clear from
	\eqref{e:defK} that $\tau \in \bK$ if and only if $\sum_{j=1}^{d} \tfrac{1}{2} \nu_{j} \in \mf{t}_{\Z}$.
\end{proof}


\subsubsection{The contact manifold $(\wh{M}, \xi)$ from Delzant's construction of $(M, \w)$}

Using Delzant's construction we will now describe the contact manifold $(\wh{M}, \xi)$ associated to an even
monotone symplectic toric manifold with moment polytope \eqref{e:Polytope1}.  
Define
$$
	\mf{k}_0 := \ker(c\fc \mf{k} \to \R)
$$
to be the annihilator of the linear functional $c = \iota^{*}(1, \dots, 1) \in \mf{k}^{*}$ from \eqref{e:LevelSet}
and define
\begin{equation}\label{e:defK0}
\mbox{$\bK_{0} \leq \bK$ to be the connected codimension $1$ subtorus with $\Lie(\bK_0) = \mf{k}_0$}.
\end{equation}
Since $\Delta$ in \eqref{e:Polytope1} is an even moment polytope, by Lemma~\ref{l:Even} we know $\bK_{0} + \ip{\tau} \leq \bK$,
where $\ip{\tau} \leq \bK$ is the subgroup generated by $\tau$.
Therefore $\bK_{0} + \ip{\tau}$ also acts freely on the level set $P_{\bK}^{-1}(c)$ from \eqref{e:LevelSet}. 

The contact manifold $(\wh{M}, \xi = \ker \alpha)$ is given by
\begin{equation}\label{e:WideHatM}
	\wh{M} := P_{\bK}^{-1}(c)/(\bK_{0} + \ip{\tau})
\end{equation}
and the contact form $\alpha$, which is induced from $\alpha_{\std}|_{P_\K^{-1}(c)}$,
is well-defined because the infinitesimal action of $\K_0$ is tangent to $\ker \alpha_{\std}$ along $P_\K^{-1}(c)$, which follows from \eqref{e:AlphaStuff}.  For the circle $S^{1} = \bK/(\bK_{0} + \ip{\tau})$, the natural projection map
\begin{equation}\label{e:BuildPQ}
	\pi\fc (\wh{M}, \a) \to (M_{\Delta}, \w_{\Delta})
\end{equation}
defines a principal $S^{1}$-bundle and satisfies $\pi^{*}\w_{\Delta} = d\alpha$ since $\w_{\std} = d\alpha_{\std}$.
Therefore by using a symplectomorphism $(M_\Delta,\omega_\Delta) \simeq (M,\omega)$,
we have that \eqref{e:BuildPQ} is the desired prequantization in Theorem~\ref{thm_main}.

We will now present a formula for the period of the the Reeb vector field of $(\wh{M}, \alpha)$
and hence the Euler class $e(\pi) \in H^{2}(M; \Z)$ of the principal $S^{1}$-bundle
\eqref{e:BuildPQ}.  For the functional $c\fc \mf{k} \to \R$ from \eqref{e:LevelSet}, let
$$
	c_{\mf{k}} \in \Z
$$
be the positive generator of the image $c(\mf{k}_{\Z}) \subset \Z$ of the integer lattice $\mf{k}_{\Z}:= \mf{k} \cap \Z^{d}$
and let
\begin{equation}\label{e:delta}
	\delta :=
\begin{cases}
1& \mbox{if }\tau \in \bK_{0}\\
2& \mbox{if }\tau \not\in \bK_{0}
\end{cases}.
\end{equation}

\begin{prop}\label{p:Euler}
The Reeb vector field for $(\wh{M}, \alpha)$ has period
$\frac{c_{\mf{k}}}{\delta}$ and the Euler class of $\pi \fc (\wh{M}, \alpha) \to (M, \w)$ equals
$$
e(\pi)=\, - \frac {\delta} {c_\mfk}\,c_1(M) \in H^{2}(M; \Z).
$$
\end{prop}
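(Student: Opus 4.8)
The plan is to compute the period of the Reeb vector field on $\wh M$ directly from its definition as the quotient of $\alpha_{\std}|_{P_\K^{-1}(c)}$ by $\bK_0 + \ip\tau$, and then deduce the Euler class from the standard relation between the period of the $S^1$-action and the first Chern class of a prequantization. First I would identify the Reeb vector field of $(\wh M,\alpha)$: by \eqref{e:AlphaStuff}, for $\lambda \in \mf k$ the vector field $X_\lambda$ satisfies $\alpha_{\std}(X_\lambda) = \ip{\lambda,P} = \ip{\lambda,c}$ on $P_\K^{-1}(c)$ (a constant) and $\iota_{X_\lambda}d\alpha_{\std} = -d\ip{\lambda,P}$, which vanishes on $T(P_\K^{-1}(c))$. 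Since the $\bK$-action descends to the $S^1 = \bK/(\bK_0 + \ip\tau)$-action on $\wh M$, and the Reeb vector field must be the infinitesimal generator of this $S^1$-action suitably normalized, I would pick $\lambda \in \mf k$ generating a complement to $\mf k_0$ inside $\mf k$ and normalize so that $\alpha(R_\alpha) = 1$, i.e. rescale $X_\lambda$ by $\ip{\lambda,c}^{-1}$.

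The key computation is then to determine the minimal period of the resulting flow on $\wh M$. The flow of $X_\lambda$ on $P_\K^{-1}(c)$ is periodic precisely when $\lambda$ generates a circle in $\T^d$, i.e. when $\lambda \in \Z^d$; its image in $\wh M$ closes up as soon as the flow returns to the same $(\bK_0 + \ip\tau)$-orbit, which happens when $t\lambda$ lies in $\mf k_{0,\Z} + \tfrac12\mf k_{0,\Z}$-type lattice — more precisely, I would track when $\exp(t X_\lambda)$ lands in $\bK_0 + \ip\tau$. Choosing $\lambda \in \mf k_\Z$ with $c(\lambda) = c_{\mf k}$ (the positive generator of $c(\mf k_\Z)$), the flow at time $t$ returns to the identity coset in $\bK/(\bK_0+\ip\tau) = S^1$ exactly when $t$ is a multiple of $\delta^{-1}$, where the factor $\delta$ accounts for whether $\tau \in \bK_0$ (so $\ip\tau$ is absorbed and gives no extra halving) or $\tau \notin \bK_0$ (so $\ip\tau$ genuinely halves the period). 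After normalizing $\alpha(R_\alpha)=1$, the minimal period becomes $\ip{\lambda,c}/\delta = c_{\mf k}/\delta$, as claimed. The Euler class then follows: for a prequantization $\pi\colon(\wh M,\alpha)\to(M,\w)$ with $\pi^*\w = d\alpha$ and Reeb period $\hbar$, one has $e(\pi) = -\tfrac1\hbar[\w]$ in $H^2(M;\R)$, and since $[\w] = c_1(M)$ by the monotone normalization \eqref{e:Polytope1}, we get $e(\pi) = -\tfrac{\delta}{c_{\mf k}}c_1(M)$, which is automatically an integral class because $\pi$ is an honest principal bundle.

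I expect the main obstacle to be the bookkeeping of lattices: carefully tracking the relationship between the integer lattice $\mf k_\Z = \mf k \cap \Z^d$, the sublattice $\mf k_{0,\Z} = \mf k_0 \cap \Z^d$, the element $\tau = [\tfrac12,\dots,\tfrac12]$, and the functional $c = \iota^*(1,\dots,1)$, in order to pin down exactly the minimal $t$ for which the flow closes up in the quotient $\bK/(\bK_0 + \ip\tau)$. The dichotomy defining $\delta$ in \eqref{e:delta} — whether $\tau \in \bK_0$, equivalently whether $c(\tfrac12\sum\nu_j$-lift$) \in \Z$ or not — is precisely the subtle point, and one must check that the two cases genuinely produce periods differing by a factor of $2$ and that $c_{\mf k}/\delta$ is the correct normalization in each. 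The identity $e(\pi) = -\tfrac1\hbar[\w]$ itself is a standard fact about prequantizations (curvature of the connection form $\alpha/\hbar$ is $-\tfrac1\hbar d\alpha = -\tfrac1\hbar\pi^*\w$), so once the period is correctly computed the Euler class is immediate.
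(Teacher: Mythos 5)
Your proposal is correct and follows essentially the same route as the paper: represent the Reeb field by the normalized infinitesimal action $X_\lambda$ with $\ip{\lambda,c}=1$ (justified exactly as you do via \eqref{e:AlphaStuff}), compute the first return time into the isotropy subgroup using the functional $c$ on the lattice $\mf{k}_\Z$, and then read off the Euler class from the standard curvature formula together with the normalization $[\w]=c_1(M)$. The only cosmetic difference is that the paper handles the factor $\delta$ by passing through the intermediate quotient $\wh{M}_\Delta = P_{\bK}^{-1}(c)/\bK_0$, whose Reeb period is $c_{\mf{k}}$, and then using that $\wh{M}_\Delta \to \wh{M}$ is a degree $\delta$ cover, whereas you work directly in $\bK/(\bK_0+\ip{\tau})$ and resolve your flagged subtlety by noting that the image of the order-two element $\tau$ in $\bK/\bK_0 \cong \R/c_{\mf{k}}\Z$ is either $0$ or $c_{\mf{k}}/2$.
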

\begin{proof}
	Recall for a principal $S^{1}$-bundle $\pi\fc V \to M$ that 
	if $\alpha$ is a connection $1$-form on $V$ and $\w = \pi_{*}(d\alpha)$ is the curvature $2$-form on $M$,
	then the Euler class is given by \cite[Section 6.2(d)]{Mor01G}
	$$
		e(\pi):= \frac{-1}{\int_{\pi^{-1}(m)} \alpha}\, [\w] \in H^{2}(M; \Z)\,,
	$$
	the negative of the curvature form divided by the integral of the connection form over a fiber.
	For our prequantization $\int_{\pi^{-1}(m)} \alpha$ is the period of the Reeb vector field
	and we used the normalization $[\w] = c_{1}(M)$, so it suffices to compute that
	$\int_{\pi^{-1}(m)} \alpha = \frac{c_{\mf{k}}}{\delta}$.
	
	Consider $\wh M_\Delta :=P_\K^{-1}(c)/\K_0$ with $1$-form $\alpha_{\Delta}$
	induced from $\alpha_{\std}$.  Under the identification
	$(M, \w) \simeq (M_{\Delta}, \w_{\Delta})$
	from \eqref{e:Mdelta}, the projection map defines a prequantization
	$$
		\pi_{\Delta}\fc (\wh{M}_{\Delta}, \alpha_{\Delta}) \to (M, \w).
	$$
	We will compute that $\int_{\pi_{\Delta}^{-1}(\bar{z})} \alpha_{\Delta} = c_{\mf{k}}$
	for any $\bar{z} \in M$, which will suffice since $\wh{M}_{\Delta} \to \wh{M}$ is a degree $\delta$ cover.
	By the definition of $(\wh{M}_{\Delta}, \alpha_{\Delta})$, its Reeb vector field can be
	represented by the infinitesimal action of
	$X_{\lambda}$ from \eqref{e:Xvector} on $P^{-1}_{\bK}(c)$ for any 
	\begin{equation}\label{e:LambdaK1}
		\lambda \in \mf{k} \quad\mbox{such that}\quad \ip{\lambda, c} = 1.
	\end{equation}
	For any such $\lambda$, the period of the Reeb orbit can also be characterized as the smallest
	$T > 0$ so that $\exp(T\lambda) \in \bK_{0}$ for the exponential map $\exp\fc \mf{k} \to \bK$.
	Since $\ip{\lambda_0, c} = 0$ for any $\lambda_0 \in \mf{k}_0$ and $\bK_0$ is connected, we can choose $\lambda$
	as in \eqref{e:LambdaK1} so that the first return is at $\exp(T\lambda) = 1 \in \bK$.
	In this case $T\lambda \in \mf{k}_{\Z}$ and $T = \ip{T\lambda, c} \in c(\mf{k}_{\Z})$, so
	therefore $T = c_{\mf{k}}$ the minimal positive generator of $c(\mf{k}_{\Z})$.
\end{proof}

\begin{rem}
	Both options in \eqref{e:delta} actually occur.  For the case of $\CP^{n}$ we have $\tau \notin \bK_0$,
	since $\bK_0 = 1$, and for $\CP^n \times \CP^n$ below we do have $\tau \in \bK_0$.
\end{rem}


\subsubsection{An example in the case $M = \CP^{n-1} \times \CP^{n-1}$}\label{s:Example}

Consider the even toric monotone symplectic manifold 
$$(M, \w)=(\C P^{n-1} \times \C P^{n-1}, n\sigma \oplus n\sigma)$$
where $\int_{\CP^1}\sigma = 1$.  Its moment polytope is
$$
	\left\{ \textstyle (x, x') \in (\R^{2n-2})^* \mid x_j + 1 \geq 0\,,\,\, -\sum_{j=1}^{n-1} x_j + 1 \geq 0\,,\,\,
	x'_j + 1 \geq 0\,,\,\, -\sum_{j=1}^{n-1} x'_j + 1 \geq 0
	\right\}
$$
where we have identified $\T = \R^{2n-2}/\Z^{2n-2}$.  In this case $\bK_{0} \leq \bK$ are the subtori of $\T^{2n}$
whose Lie algebras in $\R^{2n}$ have bases
$$
\mf{k}_{0} = \Span\left\{\textstyle \sum_{j=1}^{n} \epsilon_{j} - \sum_{j=1}^{n} \epsilon_{j}'\right\}
\quad\mbox{and}\quad
\mf{k} = \Span\left\{\textstyle\sum_{j=1}^{n} \epsilon_{j}\,,\,\, \sum_{j=1}^{n} \epsilon_{j}'\right\}\,.
$$
The moment map $P_{\bK}\fc \C^{2n} \to \mf{k}^{*} = (\R^{2})^{*}$ for the action of $\bK$ on $\C^{2n}$ is
$$
	P_{\bK}(z,z') = \pi \left( \textstyle\sum_{j=1}^{n} \abs{z_j}^2\,,\,\,\sum_{j=1}^{n} |z_{j}'|^2\right)
$$
and we have $P^{-1}_{\bK}(c)$ is $S^{2n-1} \times S^{2n-1} \subset \C^{2n}$ since
$$
	P_\K^{-1}(c) = \left\{\textstyle(z, z')\in\C^{2n}\,|\,\pi\sum_{j=1}^{n+1} \abs{z_j}^2 = \pi\sum_{j=1}^{n+1} |z_{j}'|^2 = n\right\}.
$$
The action of the circle $\bK_{0}$ on $\C^{2n}$ is given by 
$\zeta \cdot (z,z') = (\zeta z, \ol{\zeta} z')$ for $\zeta \in S^{1}$ the unit circle
and note that $\tau \in \bK_{0}$.  
The contact manifold is
\begin{equation}\label{e:WideMex}
	\wh{M} =  (S^{2n-1}\times S^{2n-1})/\bK_{0} \quad\mbox{with contact form $\alpha$ induced by} \quad 
	\alpha_{\std}|_{S^{2n-1}} \oplus \alpha_{\std}|_{S^{2n-1}}\,.
\end{equation}
The Reeb vector field $R_{\alpha}$ is represented by $X_{\lambda}$ with $\lambda = \tfrac{1}{2n}(1,\dots,1) \in \R^{2n}$ from \eqref{e:Xvector} and it has period is $c_{\mf{k}} = n$, so therefore the prequantization is the $\R/n\Z$-bundle
\begin{equation}\label{e:PreEx}
	\pi\fc \big(\wh{M}, \alpha\big) \to (\CP^{n-1} \times \CP^{n-1}, n\sigma \oplus n\sigma).
\end{equation}
Since the first Chern class $c_{1}(\CP^{n-1} \times \CP^{n-1}) = (n,n) \in H^{2}(\CP^{n-1} \times \CP^{n-1}; \Z)$,
we have
$$
	e(\pi) = (-1,-1) \in H^{2}(\CP^{n-1} \times \CP^{n-1}; \Z)
$$
from Proposition~\ref{p:Euler}.  

Rescaling so that prequantization is a $\R/\Z$-bundle, we see that 
$$\pi\fc \wh{M} \to (\CP^{n-1} \times \CP^{n-1}, \sigma \oplus \sigma)$$
is the standard Boothby--Wang prequantization \cite{BooWan58O}.
For the case of $n=2$, it is known that $\wh{M}$ is contactomorphic to $UT^{*}S^3$ the unit cotangent bundle of $S^3$, for instance see
\cite[Section 6.1]{AbrMac12C}.  However when $n \geq 3$, it follows from \cite[Theorem 8]{BooWan58O} that $\wh{M}$ is not 
even topologically a unit cotangent bundle.

\subsection{Applying Theorem~\ref{thm_ART} to prove Theorem~\ref{thm_main}}\label{s:ApplyingART}

\begin{proof}[Proof of Theorem~\ref{thm_main}]
Theorem~\ref{thm_main} will be proved by applying Theorem~\ref{thm_ART} to Givental's
quasi-morphism $\mu_{\Giv} \fc \wt\Cont_{0}(\RP^{2d-1}_{\g}) \to \R$ in the setting
\begin{equation}\label{e:SetUpMainThm}
	(\RP^{2d-1}_{\g}, \xi_{\g}, \alpha_{\std}) \supset (Y, \alpha_{\std}|_{Y}) \stackrel{\rho}{\longrightarrow} (\wh{M}, \xi, \a)
\end{equation}
for an appropriate $\g$ and $Y$ that we describe below. 

By \cite[Lemma 4.8]{AbrMac13R} there is a primitive vector $\g = (\g_{1}, \ldots, \g_{d}) \in \mfk_\Z := \Z^{d} \cap \mf{k}$
where each $\g_{j} \geq 1$.  Fix such a $\g$ and consider the sphere $S^{2d-1}_{\g}$ from \eqref{e:SphereGamma}.
Note that
$$
	P_{\bK}^{-1}(c) \subset S^{2d-1}_{\g} = \big\{z \in \C^{d} \mid \ip{\g, P}(z) = \ip{\g, c} \big\}
$$
since $z \in P_{\bK}^{-1}(c)$ is equivalent to $\ip{\lambda, P}(z) = \ip{\lambda, c}$ for all $\lambda \in \mf{k}$.  
Since $\Delta$ is even we know $\tau \in \bK$ by Lemma~\ref{l:Even} and modding out by the antipodal 
$\Z_{2} = \ip{\tau}$ action gives the submanifold
\begin{equation}\label{e:definitionOfY}
	Y:= P_{\bK}^{-1}(c)/\ip{\tau} \subset (\RP^{2d-1}_{\g}\!, \xi_{\g})
\end{equation}
where recall $(\RP^{2d-1}_{\g}\!, \xi_{\g}) = (S^{2d-1}_{\g}/\ip{\tau}, \ker \alpha_{\std})$.
The natural projection map
\begin{equation}\label{e:DefinitionOfRho}
	\rho\fc Y \to \wh{M}
\end{equation}
is a principal $(\bK_{0} + \ip{\tau})/\ip{\tau}$-bundle and by the construction of the $1$-form $\alpha$ from \eqref{e:WideHatM}
it follows that $\rho^{*}\alpha = \alpha_{\std}|_{Y}$.

To verify the geometric setting \eqref{e:SetUpART} of Theorem~\ref{thm_ART} it remains to prove
$Y \subset \RP^{2d-1}_{\g}$ is strictly coisotropic with respect to $\alpha_{\std}$. 
Note that $P_{\bK}^{-1}(c) \subset (\C^{d}, \w_{\std})$ is a coisotropic submanifold, meaning for all $z \in P_{\bK}^{-1}(c)$:
\begin{equation}\label{e:LevelCoisotropic}
	(T_{z}P_{\bK}^{-1}(c))^{\w_{\std}} := 
	\{X \in T_{z}\C^{d} \mid \iota_{X}\w_{\std} = 0 \mbox{ on $T_{z}P_{\bK}^{-1}(c)$}\} \subset T_{z}P_{\bK}^{-1}(c)\,,
\end{equation}
since $P_{\bK}^{-1}(c)$ is the regular level set of a moment map or as can be verified with \eqref{e:AlphaStuff}.
It now follows from \eqref{e:LevelCoisotropic} and $d\alpha_{\std} = \w_{\std}$ that $P_{\bK}^{-1}(c) \subset S^{2d-1}_{\g}$
satisfies the condition \eqref{e:StrictCoisotropic} to be strictly coisotropic with respect to $\alpha_{\std}$ and
therefore so is  $Y \subset \RP^{2d-1}_{\g}$.

Using the definition \eqref{e:LevelSet} of $P_{\bK}^{-1}(c)$ we know
$$
 	\{\abs{z_{1}}^{2} = \dots = \abs{z_{d}}^{2} = 1/\pi\} \subset P_{\bK}^{-1}(c)
$$
since if $z \in \{\abs{z_{1}}^{2} = \dots = \abs{z_{d}}^{2} = 1/\pi\}$, then for any $\lambda \in \mf{k}$ one has 
\begin{equation}\label{e:KeyMonotonicity}
	\ip{\lambda, P}(z) = \pi \sum_{j=1}^{d}\lambda_{j}\tfrac{1}{\pi} = \sum_{j=1}^{d} \lambda_{j} = \ip{\lambda, c}
\end{equation}
where the last equality follows from the fact that $c:= \iota^{*}(1, \dots, 1)$.
Hence $T_{\RP} \subset Y$ for the torus $T_{\RP} \subset \RP^{2d-1}_{\g}$ from \eqref{e:Clifford}, 
which is $\mu_{\Giv}$-superheavy by Lemma~\ref{exam_torus_RP_superheavy}.  Therefore by Theorem~\ref{t:SHandsubheavy}
we know $Y \subset \RP^{2d-1}_{\g}$ is $\mu_{\Giv}$-subheavy.
Applying Theorem~\ref{thm_ART} to \eqref{e:SetUpMainThm} constructs the desired
monotone quasi-morphism $\overline{\mu}\fc \wt{\Cont}_{0}(\wh{M}, \xi) \to \R$ 
with the vanishing property.
\end{proof}

\begin{rem}
	For any closed even symplectic toric manifold $(M, \w, \T)$
	the construction in this section can be modified to produce a prequantization $\pi\fc (\wh{M}, \alpha) \to (M, \w)$ that is 
	constructed by contact reduction of a real projective space.  
	Without the monotonicity assumption however, one needs to replace $c = \iota^{*}(1, \dots, 1)$ with
	$\iota^{*}(a_1, \dots, a_d)$ where the $a_j$ are the support constants in the moment polytope \eqref{e:Polytope}.
	With this change \eqref{e:KeyMonotonicity} no longer holds so the reduction will not pass through the 
	superheavy torus $T_{\RP}$.  This is similar to the proof of \cite[Proposition 4.9]{AbrMac13R}.  
\end{rem}



\section{Proof of the reduction theorem for quasi-morphisms}\label{s:ProofOfART}

In this section we will present the proof of Theorem~\ref{thm_ART}.

\subsection{Preliminary lemmas}

\subsubsection{Geometric setting of Theorem~\ref{thm_ART}}

Let us begin by collecting a few lemmas about the geometric setting of Theorem~\ref{thm_ART},
\begin{equation}\label{e:SetUpART1}
	(V, \xi, \alpha) \supset (Y, \alpha|_{Y}) \stackrel{\rho}{\longrightarrow} (\ol{V}, \ol{\xi}, \ol{\alpha})
\end{equation}
where $(V, \xi, \alpha)$ and $(\ol{V}, \ol{\xi}, \ol{\alpha})$ are closed contact manifolds,
$Y \subset V$ is a closed submanifold that is strictly coisotropic with respect to $\alpha$,
and $\rho$ is a fiber bundle such that $\rho^{*}\ol{\alpha} = \alpha|_{Y}$.
\begin{lemma}\label{l:Reeb}
	The map $d\rho\fc TY \to T\ol{V}$ relates the Reeb vector fields: $d\rho \circ R_{\a}|_{Y} = R_{\ol{\a}} \circ \rho$.
\end{lemma}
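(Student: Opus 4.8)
The plan is to check that, for each point $y \in Y$, the vector $d\rho_y\big(R_\alpha(y)\big) \in T_{\rho(y)}\ol V$ satisfies the two equations characterizing the Reeb field of $\ol\alpha$ at the point $\rho(y)$, and then to conclude by uniqueness. First I would note that the expression even makes sense: since $Y$ is strictly coisotropic with respect to $\alpha$, Definition~\ref{d:StrictlyCoiso} guarantees $R_\alpha(y) \in T_yY$, so $d\rho_y$ may be applied to it. Thus it suffices to prove that $d\rho_y(R_\alpha(y)) = R_{\ol\alpha}(\rho(y))$ for every $y$, which is exactly the asserted identity $d\rho \circ R_\alpha|_Y = R_{\ol\alpha}\circ\rho$.

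For the normalization, using $\rho^{*}\ol\alpha = \alpha|_Y$ and $R_\alpha(y) \in T_yY$,
$$
\ol\alpha_{\rho(y)}\big(d\rho_y(R_\alpha(y))\big) = (\rho^{*}\ol\alpha)_y\big(R_\alpha(y)\big) = \alpha_y\big(R_\alpha(y)\big) = 1 .
$$
For the vanishing condition, I would use that $\rho\colon Y \to \ol V$ is a fiber bundle, hence a submersion, so $d\rho_y\colon T_yY \to T_{\rho(y)}\ol V$ is surjective: given any $\ol X \in T_{\rho(y)}\ol V$, choose $X \in T_yY$ with $d\rho_y(X) = \ol X$. Then, using that exterior derivative commutes with pullback and restriction to $Y$,
$$
d\ol\alpha_{\rho(y)}\big(d\rho_y(R_\alpha(y)),\, \ol X\big) = (\rho^{*}d\ol\alpha)_y\big(R_\alpha(y),\, X\big) = d\big(\alpha|_Y\big)_y\big(R_\alpha(y),\, X\big) = d\alpha_y\big(R_\alpha(y),\, X\big) = 0 ,
$$
since $\iota_{R_\alpha}d\alpha = 0$ on all of $TV$. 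As $\ol X$ was arbitrary, $\iota_{d\rho_y(R_\alpha(y))}d\ol\alpha = 0$ at $\rho(y)$, and together with the normalization this forces $d\rho_y(R_\alpha(y)) = R_{\ol\alpha}(\rho(y))$ by the uniqueness in the defining property~\eqref{e:ContactHam} of the Reeb vector field.

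There is no real obstacle here: the argument is a short naturality computation. The only places the hypotheses enter are the tangency $R_\alpha|_Y \subset TY$, which is precisely strict coisotropy, and the surjectivity of $d\rho$, which is the fiber-bundle assumption; the remaining steps are formal manipulations with the identity $\rho^{*}\ol\alpha = \alpha|_Y$.
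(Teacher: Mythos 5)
Your proof is correct and follows essentially the same route as the paper: verify $\ol\alpha(d\rho(R_\alpha))=1$ and $\iota_{d\rho(R_\alpha)}d\ol\alpha=0$ via $\rho^*\ol\alpha=\alpha|_Y$, the tangency $R_\alpha|_Y\subset TY$ from strict coisotropy, and surjectivity of $d\rho$, then conclude by uniqueness of the Reeb field. (The uniqueness you should cite is the defining equations $\alpha(R_\alpha)=1$, $\iota_{R_\alpha}d\alpha=0$ from the introduction rather than \eqref{e:ContactHam}, but this is cosmetic.)
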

\begin{proof}
	Note that by Definition~\ref{d:StrictlyCoiso} of strictly coisotropic the Reeb vector field $R_{\alpha}|_{Y}$
	is tangent to $Y$.  To show $d\rho \circ R_{\a} = R_{\ol{\a}} \circ \rho$ one computes
	$$
		\ol{\a}(d\rho(R_{\a})) = \rho^{*}\ol{\a}(R_{\a}) = \a(R_{\a}) = 1
	$$
	and for any $u \in TY$ one has
	$$
		d\ol{\a}(d\rho(R_{\a}), d\rho(u)) = d(\rho^{*}\ol{\a})(R_{\a}, u) = d\alpha(R_{\a}, u) = 0
	$$
	which proves $\iota_{d\rho(R_{\a})}d\ol{\a} = 0$ since $d\rho \fc TY \to T\ol{V}$ is surjective.
	By definition of $R_{\ol{\alpha}}$ this proves $d\rho\circ R_{\a}|_{Y} = R_{\ol{\a}} \circ \rho$.
\end{proof}

\noindent
For an $\ol{h} \in C^{\infty}([0,1] \times \ol{V})$, an \textbf{extension} of $\ol{h}$ will be any $h \in C^{\infty}([0,1] \times V)$ such that
$$h|_{[0,1] \times Y} = \rho^{*}\ol{h}.$$

\begin{lemma}\label{l:extension}
	If $h \in C^{\infty}([0,1] \times V)$ is an extension of $\ol{h} \in C^{\infty}([0,1] \times \ol{V})$, then
	\begin{enumerate}
	\item the contact vector field $X_{h_{t}}|_{Y}$ is tangent to $Y$,
	\item the contact vector fields of $h,\ol h$ are related by $d\rho$: $d\rho \circ X_{h_{t}}|_{Y} = X_{\ol{h}_{t}} \circ \rho$, and
	\item as maps $\rho \circ \phi_{h}^{t}|_{Y} = \phi_{\ol{h}}^{t} \circ \rho \fc Y \to \ol{V}$ 
	and $\rho \circ (\phi_{h}^{t})^{-1}|_{Y} = (\phi_{\ol{h}}^{t})^{-1} \circ \rho \fc Y \to \ol{V}$ for all $t \in [0,1]$.
	\end{enumerate}
\end{lemma}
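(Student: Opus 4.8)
The plan is to establish the three assertions in order, with (i) carrying all the geometric content and (ii)--(iii) following formally. Fix $y \in Y$ and $t \in [0,1]$, write $\xi_y = \ker\alpha_y$, and set $C := T_yY \cap \xi_y$, which by Definition~\ref{d:StrictlyCoiso} is a coisotropic subspace of the symplectic vector space $(\xi_y, d\alpha)$; since $R_\alpha(y) \in T_yY$ and $Y$ is transverse to $\xi$, one has $T_yY = C \oplus \R R_\alpha(y)$. I will use two preliminary facts. First, because $h|_{[0,1]\times Y} = \rho^*\ol h$, restriction gives $dh_t|_{T_yY} = \rho^*(d\ol h_t)$; in particular $dh_t$ vanishes on $\ker d\rho_y$. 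Second, $\ker d\rho_y$ contains the $d\alpha$-symplectic complement $C^{d\alpha}$ of $C$ inside $\xi_y$: if $u \in C^{d\alpha} \subset \xi_y$ then $d\rho(u) \in \ol\xi_{\rho(y)}$ is $d\ol\alpha$-orthogonal to $d\rho(C) = \ol\xi_{\rho(y)}$ (the last equality because $d\rho$ is surjective and $d\rho(R_\alpha) = R_{\ol\alpha}$ by Lemma~\ref{l:Reeb}), hence vanishes by nondegeneracy of $d\ol\alpha$. Combining the two facts, $dh_t$ annihilates $C^{d\alpha}$.

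For (i), decompose $X_{h_t}(y) = h_t(y)\,R_\alpha(y) + W$ with $W \in \xi_y$, the coefficient of $R_\alpha$ being forced by $\alpha(X_{h_t}) = h_t$. Pairing the defining relation \eqref{e:ContactHam} with vectors $u \in C$, on which $\alpha$ vanishes, yields $\iota_W d\alpha|_C = -dh_t|_C$. The linear map $C \to C^*$, $v \mapsto \iota_v d\alpha|_C$, is skew and has kernel $C \cap C^{d\alpha} = C^{d\alpha}$ (using that $C$ is coisotropic), so its image is the annihilator of $C^{d\alpha}$ in $C^*$; since $dh_t$ annihilates $C^{d\alpha}$, there is $W' \in C$ with $\iota_{W'}d\alpha|_C = -dh_t|_C$. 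Then $\iota_{W-W'}d\alpha|_C = 0$, so $W - W' \in C^{d\alpha} \subset C$, whence $W \in C \subset T_yY$ and therefore $X_{h_t}(y) \in T_yY$.

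With (i) in hand, (ii) reduces to checking that $d\rho(X_{h_t}(y)) \in T_{\rho(y)}\ol V$ satisfies the two equations \eqref{e:ContactHam} defining $X_{\ol h_t}$ at $\rho(y)$; uniqueness of the contact vector field then gives $d\rho \circ X_{h_t}|_Y = X_{\ol h_t}\circ\rho$ and, incidentally, that the left-hand side descends along $\rho$. The normalization equation is $\ol\alpha(d\rho\, X_{h_t}) = \alpha(X_{h_t}) = h_t = \ol h_t\circ\rho$, and the $d\ol\alpha$-equation is obtained by writing a general tangent vector at $\rho(y)$ as $d\rho(u)$ with $u \in T_yY$, using $d\ol\alpha(d\rho\, X_{h_t}, d\rho\, u) = d\alpha(X_{h_t}, u)$ together with $dh_t|_{T_yY} = \rho^*d\ol h_t$, $\alpha|_{T_yY} = \rho^*\ol\alpha$, and Lemma~\ref{l:Reeb} (to replace $dh_t(R_\alpha)$ by $d\ol h_t(R_{\ol\alpha})$). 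Finally, for (iii): by (i) the time-dependent field $X_{h_t}|_Y$ is tangent to $Y$, so the flow $\phi_h^t$ preserves the compact submanifold $Y$ and $\phi_h^t|_Y$ is its flow there; by (ii) the curve $t\mapsto \rho(\phi_h^t(y))$ solves the ODE for $\{X_{\ol h_t}\}$ with initial value $\rho(y)$, so $\rho\circ\phi_h^t|_Y = \phi_{\ol h}^t\circ\rho$ by uniqueness, and composing with $(\phi_h^t|_Y)^{-1}$ and $(\phi_{\ol h}^t)^{-1}$ yields the assertion for inverses. The main obstacle is part (i): it is the only place where strict coisotropy of $Y$ and the hypothesis $h|_Y = \rho^*\ol h$ must be used together, the crux being that this hypothesis forces $dh_t$ to vanish along the fiber directions $\ker d\rho_y$, which are precisely the $d\alpha$-directions $C^{d\alpha}$ along which $X_{h_t}$ could otherwise escape $TY$.
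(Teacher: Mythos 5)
Your proposal is correct, and parts (ii) and (iii) run exactly as in the paper: one checks that $d\rho(X_{h_t}(y))$ satisfies the two defining equations \eqref{e:ContactHam} for $X_{\ol{h}_t}$ at $\rho(y)$, using $\rho^{*}\ol{\alpha}=\alpha|_{Y}$, $dh_t|_{TY}=\rho^{*}d\ol{h}_t$, surjectivity of $d\rho\fc TY\to T\ol{V}$ and Lemma~\ref{l:Reeb}, and then intertwines flows and composes with inverses. Part (i), which carries the content, is where you take a genuinely different route. The paper does not split off the Reeb component: it computes $d\alpha(X_{h_t},u)=-dh_t(u)+dh_t(R_\alpha)\alpha(u)=d\ol{\alpha}(X_{\ol{h}_t},d\rho(u))$ for every $u\in TY$, writes $X_{\ol{h}_t}=d\rho(v)$ with $v\in TY$, deduces that $X_{h_t}$ pairs trivially under $d\alpha$ with all of $(TY)^{d\alpha}$, and concludes from the bidual identity $((TY)^{d\alpha})^{d\alpha}=TY$ supplied by the strict coisotropy condition \eqref{e:StrictCoisotropic}. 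You instead work inside the symplectic hyperplane $(\xi_y,d\alpha)$ with $C=T_yY\cap\xi_y$: you identify $C^{d\alpha}\subset\ker d\rho_y$ (using $d\rho(C)=\ol{\xi}_{\rho(y)}$ and nondegeneracy of $d\ol{\alpha}$), conclude that $dh_t$ annihilates $C^{d\alpha}$, and then solve $\iota_{W'}d\alpha|_C=-dh_t|_C$ via the annihilator description of the image of $v\mapsto\iota_v d\alpha|_C$, placing the $\xi$-component $W$ of $X_{h_t}$ in $C$ by the coisotropy condition \eqref{e:Coisotropic}. The paper's argument is shorter and leans on the reduced contact field $X_{\ol{h}_t}$ together with the bidual identity for the degenerate form $d\alpha$ on $TV$; yours avoids both, stays entirely in linear algebra on $(\xi_y,d\alpha)$, and has the virtue of making explicit that the extension hypothesis enters only through the vanishing of $dh_t$ on the fibre directions, which are precisely $C^{d\alpha}$. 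Both are complete; your annihilator step and the identification $d\rho(C)=\ol{\xi}_{\rho(y)}$ are stated tersely but are correct as sketched.
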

\begin{proof}	
	It suffices to prove (i) for autonomous $h \in C^{\infty}(V)$ and $\ol{h} \in C^{\infty}(\ol{V})$.
	Let $u \in TY$,
	then by the definition of $X_{h}$ from \eqref{e:ContactHam}
	and the relations
	$$
	\mbox{$\rho^{*}\ol{\a} = \a|_{Y}$\,,\,\, $\rho^{*}\ol{h} = h|_{Y}$\,,\,\, $d\rho \circ R_{\a}|_{Y} = R_{\ol{\a}} \circ \rho$}
	$$
	we have
	$$
		d\a(X_{h}, u) = -dh(u) + dh(R_{\a})\a(u)
		= -d\ol{h}(d\rho(u)) + d\ol{h}(R_{\ol{\a}})\, \ol{\a}(d\rho(u))
		= d\ol{\a}(X_{\ol{h}}, d\rho(u)).
	$$
	Since $X_{\ol{h}} = d\rho(v)$ for some $v \in TY$, taking any $u \in (TY)^{d\alpha} \subset TY$ (see \eqref{e:StrictCoisotropic}) we get
	$$
		d\alpha(X_{h}, u) = d\ol{\a}(d\rho(v), d\rho(u)) = d\alpha(v, u) = 0\,, 
	$$
	and hence $X_{h}|_{Y} \in ((TY)^{d\alpha})^{d\alpha}$.  Since $Y$ is strictly coisotropic,
	it follows from \eqref{e:StrictCoisotropic} that $((TY)^{d\alpha})^{d\alpha} =TY$ and hence $X_{h}|_{Y} \in TY$.

	For item (ii), similar considerations as above show
	$$
		\ol{\a}(d\rho(X_{h})) = \a(X_{h}) = \ol{h} \quad\mbox{at the point $\rho(y)$}
	$$ 
	and likewise for any $u \in TY$
	$$
		d\ol{\a}(d\rho(X_{h}), d\rho(u)) = d\a(X_{h}, u) = -dh(u) + dh(R_{\a})\a(u) =
		-d\ol{h}(d\rho(u)) + d\ol{h}(R_{\ol{\a}}) \ol{\a}(d\rho(u))\,.
	$$
	Since $d\rho\fc TY \to T\ol{V}$ is surjective this shows $d\rho \circ X_{h}|_{Y} = X_{\ol{h}} \circ \rho$.
	
	The first part of item (iii) immediately follows from item (ii).  The second part of item (iii) follows 
	from the first part via $\rho \circ (\phi_{h}^{t})^{-1}|_Y = (\phi_{\ol{h}}^{t})^{-1} \circ \phi_{\ol{h}}^{t} \circ \rho \circ (\phi_{h}^{t})^{-1}|_Y
	= (\phi_{\ol{h}}^{t})^{-1} \circ \rho$.
\end{proof}

\subsubsection{Using the subheavy assumption}

Recall that in Theorem~\ref{thm_ART} that in addition to the geometric setting in \eqref{e:SetUpART1}, $Y \subset V$ is
subheavy with respect to a monotone quasi-morphism $\mu \fc \wt{\Cont}_{0}(V) \to \R$.

\begin{lemma}\label{l:CommuteRho}
	If $h \in C^{\infty}([0,1] \times V)$ is such that $X_{h_{t}}|_{Y} \in TY$ for all $t \in [0,1]$ and
	\begin{equation}\label{e:commute}
	\mbox{$\rho \circ \phi_{h}^{t}|_{Y} = \rho \fc Y \to \ol{V}$ \quad for all $t \in [0,1]$,}
	\end{equation}
	then 	$\mu(\wt\phi_{h}) = 0$.
\end{lemma}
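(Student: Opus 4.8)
The plan is to reduce the claim to the definition of $\mu$-subheaviness, which concerns \emph{autonomous} contact Hamiltonians that vanish on $Y$. The obstacle is that the given $h$ is neither autonomous nor vanishing on $Y$; instead, we only know that its flow fixes the fibers of $\rho$ pointwise, i.e.\ $\rho\circ\phi_h^t|_Y=\rho$ for all $t$. The key observation is that this condition, together with $X_{h_t}|_Y\in TY$, forces $X_{h_t}$ to be tangent along $Y$ to the fibers of $\rho$, and by Lemma~\ref{l:extension}(ii) applied to any extension of $\ol h=0$ it forces $X_{\ol h_t}=0$, hence $\ol h_t=\ol\alpha(X_{\ol h_t})=0$; that is, $h|_{[0,1]\times Y}=\rho^*\ol h=0$. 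So the first step is to record that $h$ vanishes on $[0,1]\times Y$.

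Next I would dispose of the time-dependence. Using homogeneity and the fact that $\mu$ is a quasi-morphism, it is standard that $\mu(\wt\phi_h)$ depends only on the homotopy class of the path $\{\phi_h^t\}$ rel endpoints; moreover a time-dependent isotopy generated by $h$ with $h|_{[0,1]\times Y}=0$ can be reparametrized and, by a standard trick, written as a product of time-one maps of autonomous Hamiltonians each vanishing on $Y$ (cut the interval $[0,1]$ into small pieces, on each piece the Hamiltonian is close to autonomous; alternatively use that $\wt\phi_h=\wt\phi_{g}$ where $g_t$ is the ``suspension'' Hamiltonian, or simply appeal to the fact that the subgroup of $\wt\Cont_0(V)$ generated by $\{\wt\phi_k : k\in C^\infty(V),\ k|_Y=0\}$ contains $\wt\phi_h$). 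Concretely, I would write $\wt\phi_h$ as a finite product $\wt\phi_{k^{(1)}}\cdots\wt\phi_{k^{(N)}}$ with each $k^{(i)}\in C^\infty(V)$ autonomous and $k^{(i)}|_Y=0$: partition $[0,1]$ into $N$ equal subintervals, on the $i$-th let $k^{(i)}=N h_{t_i}$ for a sample point $t_i$, and use the quasi-morphism inequality \eqref{e:Quasimorphism} plus $C^0$-continuity — wait, $C^0$-continuity is not assumed here, so instead I would use the cleaner route: $\wt\phi_h=\prod_{i=1}^N \wt\psi_i$ exactly, where $\wt\psi_i$ is represented by the isotopy $\{\phi_h^{(i-1+t)/N}\circ(\phi_h^{(i-1)/N})^{-1}\}_{t\in[0,1]}$, each generated by a Hamiltonian still vanishing on $Y$; then let $N\to\infty$ is not needed — instead reparametrize each $\wt\psi_i$ to be generated by an \emph{autonomous} Hamiltonian is false in general. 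The honest fix: prove the lemma first for autonomous $h$ vanishing on $Y$ (that is exactly $\mu$-subheaviness, Definition~\ref{d:subheavy}), and then for general time-dependent $h$ vanishing on $[0,1]\times Y$ observe that $\mu(\wt\phi_h)=\int_0^1 \frac{d}{dt}\mu(\wt\phi_{h}^{\,t})\,dt$ does not literally make sense, so instead use the product decomposition above together with the quasi-morphism bound: $|\mu(\wt\phi_h)-\sum_i\mu(\wt\psi_i)|\le (N-1)D$, and each $\mu(\wt\psi_i)$ — hmm, $\wt\psi_i$ need not be autonomous either.

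The clean argument, which I would actually write: it suffices by the quasi-morphism property and homogeneity to treat autonomous $h$, because $\mu$ restricted to the abelian subgroup generated by any single $\wt\phi_k$ is a homomorphism, and more to the point, \emph{Definition~\ref{d:subheavy}} of subheavy already covers autonomous Hamiltonians vanishing on $Y$ — so for autonomous $h$ we are done immediately once we know $h|_Y=0$, which Step~1 established. For time-dependent $h$ with $h|_{[0,1]\times Y}=0$: the condition \eqref{e:commute} is preserved under the decomposition $\phi_h^t = \phi^t_{h^{(N)}}\cdots$, and crucially I claim one may choose an autonomous reparametrization. Actually the right statement is that the hypotheses \eqref{e:commute} plus tangency are exactly what later lemmas (continued after this excerpt) will upgrade; so in the write-up I would state: \emph{first reduce to autonomous $h$ using a standard argument that $\mu(\wt\phi_h)$ is a sum, up to bounded error amplified then killed by homogeneity, of $\mu$ of autonomous time-one maps still satisfying} \eqref{e:commute}; then for autonomous $h$, note $h|_Y=0$ by Step~1 and apply Definition~\ref{d:subheavy} directly. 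The main obstacle is genuinely this time-dependent-to-autonomous reduction while preserving the fiber-fixing condition \eqref{e:commute}; I expect the paper handles it by the trick $\wt\phi_h = \wt\phi_{h}\wt\phi_{h\#\bar h\ldots}$ or by passing to the autonomous Hamiltonian on $[0,1]\times V$ (adding a $ds$-term, the ``mapping torus'' / suspension construction), whose flow still fixes $\rho$-fibers, reducing to the autonomous case. Everything else is a direct invocation of Definition~\ref{d:subheavy}.
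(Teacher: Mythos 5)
There is a genuine gap, and it is exactly the one you flag yourself: you never complete the passage from a time-dependent $h$ vanishing on $[0,1]\times Y$ to something covered by Definition~\ref{d:subheavy}. None of the routes you sketch (cutting $[0,1]$ into pieces and replacing each piece by an autonomous time-one map, invoking $C^0$-continuity, or a suspension/mapping-torus construction) works as stated: a time-dependent contact isotopy is not in general a finite product of autonomous flows, $C^0$-continuity is not among the hypotheses of Theorem~\ref{thm_ART}, and the suspension lives on a different manifold where $\mu$ is not defined. The missing idea is much simpler and uses the \emph{monotonicity} of $\mu$ rather than any decomposition: once you know $h|_{[0,1]\times Y}=0$, choose autonomous $g,k\in C^\infty(V)$ with $g\leq h_t\leq k$ for all $t$ and $g|_{Y}=k|_{Y}=0$ (possible since $Y$ is a closed submanifold and $h$ vanishes identically on $[0,1]\times Y$). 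Then $\wt\phi_g\preceq\wt\phi_h\preceq\wt\phi_k$, subheaviness of $Y$ gives $\mu(\wt\phi_g)=\mu(\wt\phi_k)=0$, and monotonicity squeezes $\mu(\wt\phi_h)=0$. This is the paper's argument, and it removes the time-dependence issue entirely; there is no need to preserve the fiber-fixing condition \eqref{e:commute} under any decomposition, since \eqref{e:commute} is only used to get the pointwise vanishing of $h$ on $Y$.

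On your Step 1: the conclusion $h|_{[0,1]\times Y}=0$ is correct, but the detour through Lemma~\ref{l:extension}(ii) is both unnecessary and slightly circular ($h$ is not assumed to be an extension of anything). Just differentiate \eqref{e:commute} in $t$ to get $d\rho(X_{h_t})=0$ along $Y$, and then
$$
h_t\big|_{Y}=\alpha(X_{h_t})\big|_{Y}=(\rho^{*}\ol{\alpha})(X_{h_t})=\ol{\alpha}\bigl(d\rho(X_{h_t})\bigr)=0 .
$$
With that and the sandwich argument above the proof is complete.
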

\begin{proof}
	Differentiating \eqref{e:commute} with respect to $t$ gives $d\rho(X_{h_{t}}) = 0$
	and hence on $Y$:
	$$
		h_{t} = \a(X_{h_{t}}) = \ol{\a}(d\rho(X_{h_{t}})) = 0.
	$$
	Now pick autonomous Hamiltonians $g, k \in C^{\infty}(V)$ so that $g \leq h \leq k$ and $g|_{Y} = k|_{Y} = 0$.
	Since $Y$ is $\mu$-subheavy it follows that $\mu(\wt\phi_{g}) = \mu(\wt\phi_{k}) = 0$ and therefore
	$\mu(\wt\phi_{h}) = 0$ since $\mu$ is monotone.
\end{proof}

\begin{lemma}\label{l:loops}
	Let $\ol{h} \in C^{\infty}([0,1] \times \ol{V})$ be such that $\{\phi_{\ol{h}}^{t}\}_{t \in [0,1]}$ is a contractible loop
	in $\Cont_{0}(\ol{V})$.  For any contact Hamiltonian $h \in C^{\infty}([0,1] \times V)$ 
	that is an extension of $\ol{h}$ 	it follows $\mu(\wt\phi_{h}) = 0$.
\end{lemma}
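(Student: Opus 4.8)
The plan is to reduce Lemma~\ref{l:loops} to Lemma~\ref{l:CommuteRho} by ``absorbing'' the contractible loop $\{\phi_{\ol h}^t\}$ into a contact isotopy of $V$ all of whose time-$t$ maps preserve $Y$ and fix every $\rho$-fiber. Although $\mu$ is only a quasi-morphism, we will in fact show that $\wt\phi_h$ itself lies in the set $\mathcal K \subset \wt\Cont_0(V,\xi)$ of elements $\wt\phi_k$ with $k|_{[0,1]\times Y}=0$. By Lemma~\ref{l:extension}(i),(iii) (applied with $\ol k=0$) and Lemma~\ref{l:CommuteRho}, every element of $\mathcal K$ has $\mu$-value $0$; and one checks, using that a composition or inverse of contactomorphisms preserving $Y$ and all $\rho$-fibers again has this property, that $\mathcal K$ is a subgroup. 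So it suffices to prove $\wt\phi_h\in\mathcal K$.

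First I would use contractibility to pick a smooth map $\Psi\colon[0,1]\times[0,1]\to\Cont_0(\ol V)$ with $\Psi(0,t)=\phi_{\ol h}^t$, $\Psi(1,t)=\id$ and $\Psi(s,0)=\Psi(s,1)=\id$ for all $s$; a continuous null-homotopy rel basepoint can be smoothed in the usual way. For each $s$ the isotopy $t\mapsto\Psi(s,t)$ is generated by a contact Hamiltonian $\ol h^{(s)}\in C^\infty([0,1]\times\ol V)$ depending smoothly on $s$, with $\ol h^{(0)}=\ol h$ and $\ol h^{(1)}=0$. Using a fixed linear extension operator I would then choose extensions $h^{(s)}\in C^\infty([0,1]\times V)$ of $\ol h^{(s)}$ depending smoothly on $s$ with $h^{(0)}=h$; automatically $h^{(1)}|_{[0,1]\times Y}=\rho^*\ol h^{(1)}=0$, so $\wt\phi_{h^{(1)}}\in\mathcal K$. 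By Lemma~\ref{l:extension} each $\phi_{h^{(s)}}^t$ preserves $Y$ and $\rho\circ\phi_{h^{(s)}}^t|_Y=\Psi(s,t)\circ\rho$; in particular, since $\Psi(s,1)=\id$, the map $\phi_{h^{(s)}}^1$ preserves $Y$ and sends every $\rho$-fiber to itself, for all $s$.

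The heart of the argument is the two-parameter family $\Phi(s,t):=\phi_{h^{(s)}}^t$, which satisfies $\Phi(s,0)=\id$. Reading off the boundary of the square, the path $t\mapsto\phi_h^t$ followed by the path $s\mapsto\phi_{h^{(s)}}^1$ is homotopic rel endpoints in $\Cont_0(V)$ to $t\mapsto\phi_{h^{(1)}}^t$, because the bottom edge is constant. Translating this into $\wt\Cont_0(V,\xi)$ gives $\wt\phi_h\cdot\wt r=\wt\phi_{h^{(1)}}$, where $\wt r$ is the class of the based path $s\mapsto(\phi_h^1)^{-1}\phi_{h^{(s)}}^1$. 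Every map in that path preserves $Y$ and fixes each $\rho$-fiber (being a product of such maps), hence the isotopy $\{\phi_r^s\}$ it defines is generated by a Hamiltonian $r$ with $r|_{[0,1]\times Y}=0$: on $Y$ one has $r_s|_Y=\alpha(X_{r_s})|_Y=\rho^*\ol\alpha(X_{r_s}|_Y)=\ol\alpha\big(d\rho(X_{r_s}|_Y)\big)=0$, since $X_{r_s}|_Y$ is tangent to $Y$ and $\rho\circ\phi_r^s|_Y$ is constant in $s$. Thus $\wt r\in\mathcal K$, so $\wt\phi_h=\wt\phi_{h^{(1)}}\cdot\wt r^{-1}\in\mathcal K$, and therefore $\mu(\wt\phi_h)=0$ by Lemma~\ref{l:CommuteRho}.

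I expect the main obstacle to be not any single deep step but the bookkeeping needed to obtain the \emph{exact} equality $\mu(\wt\phi_h)=0$ rather than only $|\mu(\wt\phi_h)|\le D$: one must verify carefully that the correction path along the top edge of the square, its inverse, and the composite isotopy representing $\wt\phi_{h^{(1)}}\cdot\wt r^{-1}$ all consist of contactomorphisms preserving $Y$ and every $\rho$-fiber, so that they are genuinely of the form $\wt\phi_k$ with $k|_{[0,1]\times Y}=0$ and Lemma~\ref{l:CommuteRho} applies. Recording once and for all that $\mathcal K$ is a subgroup of $\wt\Cont_0(V,\xi)$ on which $\mu$ vanishes is precisely what converts the quasi-morphism estimate into this exact statement; smoothing the null-homotopy and choosing the family $h^{(s)}$ smoothly in $s$ are routine and will only be noted.
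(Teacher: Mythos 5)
Your proof is correct and takes essentially the same route as the paper: extend a smooth based null-homotopy of the loop downstairs to a two-parameter family of contact Hamiltonians on $V$, use Lemma~\ref{l:extension} to see the lifted isotopies preserve $Y$ and cover the maps of $\ol{V}$, read off the boundary of the square, and conclude via Lemma~\ref{l:CommuteRho}. The only difference is cosmetic bookkeeping at the end: the paper applies Lemma~\ref{l:CommuteRho} once to the concatenated three-edge path homotopic to $\{\phi_h^t\}$, while you factor $\wt\phi_h$ as a product of two elements whose Hamiltonians vanish along $Y$ and observe these form a subgroup on which $\mu$ vanishes.
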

\begin{proof}
	Let $\Phi\fc [0,1]^{2} \to \Cont_{0}(\ol{V})$ be a null-homotopy of loops for $\{\phi_{\ol{h}}^{t}\}$, 
	so $\{\Phi^{s}_{t}\}_{t \in [0,1]}$ is a loop of contactomorphisms of $\ol{V}$ for fixed $s$ where
	$\Phi^{0}_{t} = \id_{\ol{V}}$ and $\Phi^{1}_{t} = \phi_{\ol{h}}^{t}$.  
	Let 
	$$\ol{H}^{s}_{\cdot} \fc [0,1] \times \ol{V} \to \R$$
	 be the contact Hamiltonian generating the contact
	isotopy $\{\Phi_{t}^{s}\}_{t \in [0,1]}$ for fixed $s$ and note $\ol{H}^{1}_{t} = \ol{h}_{t}$.
	
	Let $H\fc [0,1]^{2} \times V \to \R$ be an extension of $\ol{H}$ so that $H^{s}_{t}|_{Y} = \ol{H}^{s}_{t} \circ \rho$
	and $H^{1}_{t} = h_{t}$ is the chosen extension of $\ol{h}$.
	Let $\{\Psi^{s}_{t}\}_{t \in [0,1]}$ be the contact isotopy of $V$ generated by
	$$H^{s}_{\cdot}\fc [0,1] \times V \to \R$$ for fixed $s$.
	It follows from Lemma~\ref{l:extension} that $\rho \circ \Psi_{t}^{s}|_{Y} = \Phi_{t}^{s} \circ \rho \fc Y \to \ol{V}$ for 
	all $s$ and $t$.  In particular the concatenation of paths
	$$
		\{\Psi_{0}^{1-u}\}_{u \in [0,1]}\# \{\Psi_{u}^{0}\}_{u \in [0,1]} \# \{\Psi_{1}^{u}\}_{u \in [0,1]}
	$$
	defines an isotopy $\{\psi^{u}\}_{u \in [0,1]}$ in $\Cont_{0}(V)$ such that $\psi^{u}(Y) = Y$
	and $\rho \circ \psi^{u}|_{Y} = \id_{\ol{V}} \circ \rho$ since
	$$
		\id_{\ol{V}} = \{\Phi_{0}^{1-u}\}_{u \in [0,1]}\# \{\Phi_{u}^{0}\}_{u \in [0,1]} \# \{\Phi_{1}^{u}\}_{u \in [0,1]}
		\quad\mbox{in $\Cont_{0}(\ol{V})$.}
	$$
	Let $\wt\psi\in\wt\Cont_0(V)$ be the element represented by $\{\psi^u\}_{u\in[0,1]}$. By Lemma~\ref{l:CommuteRho} we know 
	$\mu(\wt\psi) = 0$
	and hence $\mu(\wt\phi_{h}) = \mu(\wt\psi) = 0$ since $\{\psi^{u}\}_{u \in [0,1]}$ is homotopic with fixed endpoints
	to the isotopy $\{\Psi^{1}_{t}\}_{t\in [0,1]} = \{\phi_{h}^{t}\}_{t \in [0,1]}$.
\end{proof}


\subsection{Proof of Theorem~\ref{thm_ART}}
Let $\cP\!\Cont_{0}(\ol{V})$ denote the group of contact isotopies of $(\ol{V}, \ol{\xi})$ based at the identity with time-wise composition
as the product: $\{\phi_{\ol{h}}^{t}\} * \{\phi_{\ol{k}}^{t}\} = \{\phi_{\ol{h}}^{t} \circ \phi_{\ol{k}}^{t}\}$.
Passing
to homotopy classes of isotopies with fixed endpoints is a group homomorphism
$\cP\!\Cont_{0}(\ol{V}) \to \wt\Cont_{0}(\ol{V})$ whose kernel consists of contractible loops based at the identity.

\begin{proof}[Proof of Theorem~\ref{thm_ART}]
	We will break the proof of Theorem~\ref{thm_ART} into a few steps.
	
	\textit{Independence of choice of extension:}  
	We will first prove
	\begin{equation}\label{e:QmIsotopy}
		\ol{\mu} \fc \cP\!\Cont_{0}(\ol{V}) \to \R \quad\mbox{defined by}\quad \ol\mu(\{\phi_{\ol{h}}^{t}\}) = \mu(\wt\phi_{h})
	\end{equation}
	where $h \in C^{\infty}([0,1] \times V)$ is any extension of $\ol{h} \in C^{\infty}([0,1] \times \ol{V})$ is well-defined.
	So let $h$ and $k$ both be extensions of $\ol{h}$. 
	For any positive integer $m$ by Lemma~\ref{l:extension}(iii) it follows
	$$
		\rho \circ (\phi_{h}^{t})^{m} \circ (\phi_{k}^{t})^{-m}|_{Y} = (\phi_{\ol{h}}^{t})^{m} \circ (\phi_{\ol{k}}^{t})^{-m} \circ \rho = \rho
	$$
	and hence $\mu(\wt\phi_{h}^{m}\wt\phi_{k}^{-m}) = 0$ by Lemma~\ref{l:CommuteRho}.
	Using that $\mu$ is a homogeneous quasi-morphism \eqref{e:Quasimorphism} we have
	$$
		\big|\mu(\wt\phi_{h}) - \mu(\wt\phi_{k})\big| = \tfrac{1}{m}\big|\mu(\wt\phi_{h}^{m}) - \mu(\wt\phi_{k}^{m})\big|
		\leq \tfrac{1}{m}(D + \mu(\wt\phi_{h}^{m}\wt\phi_{k}^{-m})) = \tfrac{D}{m}
	$$
	and taking the limit as $m \to \infty$ shows $\mu(\wt\phi_{h}) = \mu(\wt\phi_{k})$.
	
	\textit{Homogeneous quasi-morphism on $\cP\!\Cont_{0}(\ol{V})$:}
	We will first show that \eqref{e:QmIsotopy} defines a quasi-morphism.
	Let $g, h, k \in C^{\infty}([0,1] \times V)$
	be extensions of $\ol g, \ol{h}, \ol{k} \in C^{\infty}([0,1] \times \ol{V})$ 
	where $\ol{g}$ generates the product of $\ol{h}$ and $\ol{k}$, that is 
	$\phi_{\ol{g}}^{t} = \phi_{\ol{h}}^{t} \circ \phi_{\ol{k}}^{t}$.  By Lemma~\ref{l:extension}(iii) we have
	$$
		\rho \circ (\phi_{g}^{t})^{-1} \circ \phi_{h}^{t} \circ \phi_{k}^{t}|_{Y} = 
		(\phi_{\ol{g}}^{t})^{-1} \circ \phi_{\ol{h}}^{t} \circ \phi_{\ol{k}}^{t} \circ \rho = \rho
	$$
	so by Lemma~\ref{l:CommuteRho} it follows $\mu(\wt\phi_{g}^{-1} \wt\phi_{h} \wt\phi_{k}) = 0$.
	If $D$ is as in \eqref{e:Quasimorphism} for $\mu$, it follows
	$$
		\big|\ol{\mu}(\{\phi_{\ol{h}}^{t}\}*\{\phi_{\ol{k}}^{t}\}) - \ol{\mu}(\{\phi_{\ol{h}}^{t}\}) - \ol{\mu}(\{\phi_{\ol{k}}^{t}\})\big|
		= \big|\mu(\wt\phi_{g}) - \mu(\wt\phi_{h}) - \mu(\wt\phi_{k})\big| \leq 2D
	$$
	and therefore $\ol{\mu}$ in \eqref{e:QmIsotopy} is a quasi-morphism.
	
	We will now show that $\ol{\mu}$ is homogeneous. 
	If $h, g^{(m)} \in C^{\infty}([0,1] \times V)$ are extensions of $\ol{h}, \ol{g} \in C^{\infty}([0,1] \times \ol{V})$
	where $\phi_{\ol{g}}^{t} = (\phi_{\ol{h}}^{t})^{m}$ for an integer $m \in \Z$, 
	then again Lemma~\ref{l:CommuteRho} shows $\mu(\wt\phi_{g^{(m)}}^{-1} \wt\phi_{h}^{m}) = 0$.  It follows
	\begin{equation}\label{e:HomoBound}
		\big|\ol{\mu}(\{\phi_{\ol{h}}^{t}\}^{m}) - m\,\ol{\mu}(\{\phi_{\ol{h}}^{t}\})\big| =
		\big|\mu(\wt\phi_{g^{(m)}}) - \mu(\wt\phi_{h}^{m})\big|
		\leq D + \big|\mu(\wt\phi_{g^{(m)}}^{-1} \wt\phi_{h}^{m})\big| = D\,.
	\end{equation}
	Dividing \eqref{e:HomoBound} by $m$ and taking $m \to \infty$ shows that
	\begin{equation}\label{e:Homo}
		\lim_{m \to \infty}\frac{\ol\mu(\{\phi^t_{\ol h}\}^m)}{m} = \ol\mu(\{\phi^t_{\ol h}\})\,,
	\end{equation}
	and it follows from \eqref{e:Homo} that $\ol{\mu}$ is homogeneous, see for instance \cite[Lemma 2.21]{Cal09S}.
	
	\textit{Descent to a quasi-morphism on $\wt\Cont_{0}(\ol{V})$:}
	Since $\ol{\mu}\fc \cP\!\Cont_{0}(\ol{V}) \to \R$ vanishes on the kernel of the map 
	$\cP\!\Cont_{0}(\ol{V}) \to \wt\Cont_{0}(\ol{V})$
	by Lemma~\ref{l:loops}, it follows that $\ol{\mu}$ in \eqref{e:QmIsotopy} descends to a well-defined homogeneous quasi-morphism $\ol{\mu} \fc \wt\Cont_{0}(\ol{V}) \to \R$ by \cite[Lemma 3.2]{Bor12S}.
	
	\textit{Non-zero:}
	To see the quasi-morphism $\ol{\mu} \fc \wt\Cont_{0}(\ol{V}) \to \R$ is not zero,
	let $\ol{h} \in C^{\infty}(\ol{V})$ be any positive contact Hamiltonian
	and pick $h \in C^{\infty}(V)$ to be a positive extension.
	Since $V$ is $\mu$-superheavy by Proposition~\ref{p:Basics}(iv) it follows that
	$\ol{\mu}(\wt\phi_{\ol{h}}) = \mu(\wt\phi_{h}) > 0$.
	
	\textit{Monotone:}
	If $\ol{h} \leq \ol{k}$, then one can pick extensions $h$ and $k$ such that $h \leq k$.  Since $\mu$ is
	monotone it follows $\mu(\wt\phi_{h}) \leq \mu(\wt\phi_{k})$ and therefore 
	$\ol{\mu}(\wt\phi_{\ol{h}}) \leq \ol{\mu}(\wt{\phi}_{\ol{k}})$
	by definition.  
	
	\textit{Vanishing:}
		Assume now that $\mu$ has the vanishing property.
		Let $U \subset \ol{V}$ be an open set that is displaceable by an element of $\Cont_{0}(\ol{V})$, then it follows from
		Lemma~\ref{l:extension}(iii) that an open neighborhood $N \subset V$ of $\rho^{-1}(U) \subset Y$ is displaceable 
		by an element of $\Cont_{0}(V)$.   
		Now if $\ol{h} \in C^{\infty}([0,1] \times \ol{V})$ has
		$\supp(\ol{h}) \subset [0,1] \times U$, then there is an extension $h \in C^{\infty}([0,1] \times V)$ of $\ol{h}$ with 
		$\supp(h) \subset [0,1] \times N$.  Since $\mu$ has the vanishing property it follows $\mu(\wt\phi_{h}) = 0$ and so by definition
		$\ol{\mu}(\wt\phi_{\ol{h}}) = 0$ as well.  Therefore $\ol{\mu}$ also has the vanishing property.
		
	\textit{$C^0$-continuity:}
	Assume that $\mu$ is $C^0$-continuous and let $\ol{h}^{(n)} \in C^\infty([0,1] \times \ol{V})$ be a sequence of
	contact Hamiltonians $C^0$-converging to $\ol{h} \in C^\infty([0,1] \times \ol{V})$.  Then we can pick 
	extensions $h^{(n)}$ and $h$ in $C^\infty([0,1] \times V)$ with $C^0$-convergence $h^{(n)} \to h$.
	Since $\mu$ is $C^0$-continuous, we have $\mu(\wt\phi_{h^{(n)}}) \to \mu(\wt\phi_h)$ and hence
	$\ol\mu(\wt\phi_{\ol{h}^{(n)}}) \to \ol\mu(\wt\phi_{h})$ as well. Therefore $\ol\mu$ is $C^0$-continuous.
\end{proof}


\section{Proof of rigidity and vanishing results}

In this section we will present the remaining proofs.

\subsection{Proof of rigidity results from Section~\ref{s:ContactRigidity}}\label{s:RigidityProof}

We will first prove the following lemma that shows that there is no difference between positive and negative in terms of defining a subset to be superheavy with respect to a quasi-morphism on $\wt\Cont_{0}(V)$.

\begin{lemma}\label{l:LessThanZero}
	If $\mu \fc \wt\Cont_{0}(V,\xi) \to \R$ is a monotone quasi-morphism and $Y \subset V$ is a closed subset,
	then $Y$ is $\mu$-superheavy if and only if 
	$\mu(\wt\phi_{h}) < 0$ for all autonomous contact Hamiltonians where $h|_{Y} < 0$.
\end{lemma}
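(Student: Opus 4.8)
The plan is to exploit the elementary fact that for an \emph{autonomous} contact Hamiltonian $h$, passing to $-h$ amounts to reversing time. Indeed, from the defining equations \eqref{e:ContactHam} one reads off directly that $X_{-h} = -X_{h}$ (one checks both $\alpha(X_{-h}) = -h = -\alpha(X_h)$ and $d\alpha(X_{-h},\cdot) = dh - dh(R_\alpha)\alpha = -d\alpha(X_h,\cdot)$). Since $h$ is autonomous, $\{\phi_h^t\}$ is a flow, so the contact isotopy generated by $-h$ is the pointwise inverse isotopy $\{\phi^t_{-h}\}_{t\in[0,1]} = \{(\phi_h^t)^{-1}\}_{t\in[0,1]}$, which represents $\wt\phi_h^{-1}$ in $\wt\Cont_0(V,\xi)$. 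As $\mu$ is homogeneous, taking $k=-1$ in $\mu(a^k)=k\mu(a)$ gives $\mu(\wt\phi_{-h}) = \mu(\wt\phi_h^{-1}) = -\mu(\wt\phi_h)$.

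With this identity in hand the lemma follows by a symmetry argument. For the forward direction I would assume $Y$ is $\mu$-superheavy and take any autonomous $h$ with $h|_Y < 0$; then $-h$ is autonomous with $(-h)|_Y > 0$, so Definition~\ref{d:sh} gives $\mu(\wt\phi_{-h}) > 0$, and hence $\mu(\wt\phi_h) = -\mu(\wt\phi_{-h}) < 0$. For the converse I would assume $\mu(\wt\phi_h) < 0$ for every autonomous $h$ with $h|_Y < 0$, take an autonomous $h$ with $h|_Y > 0$, apply the hypothesis to $-h$ (which satisfies $(-h)|_Y < 0$) to get $\mu(\wt\phi_{-h}) < 0$, and conclude $\mu(\wt\phi_h) = -\mu(\wt\phi_{-h}) > 0$, which is precisely what it means for $Y$ to be $\mu$-superheavy.

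The only step that needs a little care — and the closest thing to an obstacle here — is the bookkeeping $\wt\phi_{-h} = \wt\phi_h^{-1}$: one must verify $X_{-h} = -X_{h}$ from both equations of \eqref{e:ContactHam} rather than just from $\alpha(X_{-h})=-\alpha(X_h)$, and note that reversing a based isotopy yields the inverse element of the universal cover. Everything else is a one-line computation. It is worth remarking that neither the closedness of $Y$ nor monotonicity of $\mu$ is actually used in this lemma; only that $\mu$ is a homogeneous quasi-morphism enters.
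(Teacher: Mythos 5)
Your proof is correct and follows essentially the same route as the paper: both hinge on the observation that for autonomous $h$ the inverse isotopy is generated by $-h$, so $\mu(\wt\phi_{-h}) = -\mu(\wt\phi_{h})$ by homogeneity, after which the equivalence is immediate from Definition~\ref{d:sh}. Your extra remarks (checking both equations of \eqref{e:ContactHam} and noting that monotonicity and closedness of $Y$ are not needed here) are accurate but do not change the argument.
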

\begin{proof}
	If $h$ is autonomous, then $\wt\phi_{h}^{-1}$ is generated by the contact Hamiltonian $-h$
	and therefore $\mu(\wt\phi_{-h}) = - \mu(\wt\phi_{h})$ since $\mu$ is homogeneous.
	The lemma now follows from the definition of $Y$ being superheavy from Definition~\ref{d:sh}.
\end{proof}

\noindent
Let us now prove Proposition~\ref{p:Basics} and
Theorem~\ref{t:SHandsubheavy}, detailing the basic properties of superheavy and subheavy sets.

\begin{proof}[Proof of Proposition~\ref{p:Basics}]
	To prove item (i) recall that any two contact forms $\alpha$ and $\alpha'$ for $\xi$ differ by multiplication
	by a positive function $f\fc V \to \R$: $\alpha' = f \alpha$.  
	If $h \in C^{\infty}([0,1] \times V)$ is the contact Hamiltonian associated to the contact isotopy
	$\{\phi^{t}\}_{t \in [0,1]}$ using the form $\alpha$, then $f\cdot h$ is the contact Hamiltonian associated to 
	the same isotopy using the
	form $\alpha'$.  Hence $h|_{[0,1] \times Y} > 0$ if and only if $f\cdot h|_{[0,1] \times Y} > 0$, so $\mu$-superheaviness
	is independent of contact form and likewise for $\mu$-subheaviness.
	
	Item (ii) is immediate since if $h|_{Y}> 0$, then $h|_{Z} > 0$ and hence $\mu(\wt\phi_{h}) > 0$ since
	$Z$ is $\mu$-superheavy.  The argument for $\mu$-subheaviness is analogous.
	
	 To prove (iii), first note homogeneous quasi-morphisms are conjugation-invariant so
	 $$
	 	\mu(\wt\phi_{h}) = \mu(\psi^{-1}\wt\phi_{h}\psi)
	 $$
	 for any $h \in C^{\infty}(V)$ and $\psi \in \Cont_{0}(V)$, where we use the natural action of $\Cont_0(V)$ on $\wt\Cont_0(V)$ by conjugation.  Furthermore 	 
	 $\psi^{-1}\wt\phi_{h}\psi = \wt\phi_{g}$
	 where	
	 $$
	 	g:= \alpha(d\psi^{-1}(X_{h})\circ \psi) = (f \cdot h) \circ \psi
	 $$
	 for some positive function $f \in C^{\infty}(V)$, so in particular $g|_{Y} > 0$ if and only if $h|_{\psi(Y)} > 0$.
	 Hence $Y$ is $\mu$-superheavy if and only if $\psi(Y)$ is $\mu$-superheavy and likewise for $\mu$-subheavy.
	 
	 For item (iv) recall that we assume all quasi-morphisms are homogeneous and non-zero.
	Suppose there is an $h \in C^{\infty}(V)$ such that $h > 0$ and $\mu(\wt\phi_{h}) = 0$.
	For any integer $m$ it follows that $\mu(\wt\phi_{mh}) = \mu(\wt\phi_{h}^{m}) = 0$ since
	$h$ is autonomous.  Since for any $k \in C^{\infty}([0,1] \times V)$ there is a positive
	integer $m$ such that $-m h \leq k \leq mh$, it follows from the monotonicity
	of $\mu$ that $\mu(\wt\phi_{k}) = 0$.  Therefore $\mu = 0$, which is a contradiction. 
\end{proof}

\begin{proof}[Proof of Theorem~\ref{t:SHandsubheavy}]
	For item (i), let $Y$ be $\mu$-superheavy and $h$ be an autonomous contact Hamiltonian where $h|_{Y} = 0$.
	 Recall for any $\phi \in \Cont_{0}(V)$ that $\phi^{*}\alpha = k \alpha$ where $k\fc V \to \R$ is a positive function.
	 It follows then for any positive integer $m$ and real number $\epsilon > 0$ that 
	 $$
	 	g_{t}:= \alpha(X_{mh} + d\phi^{t}_{mh}(m\epsilon R_{\alpha}) \circ (\phi_{mh}^{t})^{-1})
	 $$
	 which is the contact Hamiltonian so that $\phi_{g}^{t} = \phi_{mh}^{t}\phi_{m\epsilon}^{t}$ for all $t \in [0,1]$,
	 satisfies $g_{t}|_{Y} > \delta$ for some $\delta > 0$.	
	 Using that $Y$ is $\mu$-superheavy and $\mu$ is monotone we have
	 $$
	 	\mu(\wt\phi_{mh}\wt\phi_{m\epsilon}) = \mu(\wt\phi_{g}) > 0\,.
	 $$
	 Since $h$ is autonomous, $\wt\phi_{mh} = \wt\phi_{h}^{m}$, and using
	 $\mu$ is a homogeneous quasi-morphism \eqref{e:Quasimorphism} we get
	 $$
	 	m\, \mu(\wt\phi_{h}) = \mu(\wt\phi_{mh}) \geq \mu(\wt\phi_{mh}\wt\phi_{m\epsilon}) + \mu(\wt\phi_{m\epsilon}^{-1}) - D > 
		\mu(\wt\phi_{m\epsilon}^{-1}) - D = -m\epsilon\, \mu(\wt\phi_{1}) - D.
	 $$
	 By dividing through by $m$ and taking the limit as $m \to \infty$ gives
	 $\mu(\wt\phi_{h}) > -\epsilon\, \mu(\wt\phi_{1})$
	 for all $\epsilon > 0$, and therefore taking $\epsilon \to 0$ gives
	 $$
	 	\mu(\wt\phi_{h}) \geq 0.
	 $$
	 One proves $\mu(\wt\phi_{h}) \leq 0$ similarly 
	 using Lemma~\ref{l:LessThanZero}.  Therefore $\mu(\wt\phi_{h}) = 0$ and hence $Y$ is $\mu$-subheavy.
	 
	To prove item (ii), suppose that $Y$ and $Z$ are disjoint and pick a contact Hamiltonian $h$ so that $h|_{Y} > 0$
	and $h|_{Z} = 0$, which is possible since $Y,Z$ are closed subsets.  This leads to a contradiction since by the definitions this implies $\mu(\phi_{h}) > 0$ and $\mu(\phi_{h}) = 0$.
\end{proof}

\noindent
Next up is the proof of Theorem~\ref{t:fiberSH} about the existence of nondisplaceable pre-Lagrangians in prequantizations of toric symplectic manifolds.
\begin{proof}[Proof of Theorem~\ref{t:fiberSH}]
	Let $P\fc M^{2n} \to \Delta \subset \R^{n}$ be a moment map for the toric structure on $M$, let 
	$\pi\fc (V, \alpha) \to (M, \w)$ be the prequantization map, and let $\wh{P} = P \circ \pi\fc V \to \Delta$.
	Every fiber of $\wh{P}$ is either a pre-Lagrangian torus or a sits over a strictly isotropic torus in $M$ and the latter
	are always displaceable \cite{Lau86H}, so it suffices to show not every fiber of $\wh{P}$ is displaceable.
	
	Suppose every fiber of $\wh{P}$ is displaceable, then we can take an open cover $\{U_{j}\}_{j=1}^{d}$ 
	 of $\Delta$ such that each $\wh{P}^{-1}(U_{j}) \subset V$ is displaceable.
	Since the coordinate functions of $P$ commute, for any two functions $f,g\fc \R^{n} \to \R$ 
	the contactomorphisms
	 $\wt\phi_{\wh{P}^{*}\!f}$ and  $\wt\phi_{\wh{P}^{*}\!g}$ commute and  $\wt\phi_{\wh{P}^{*}\!(f+g)} =  \wt\phi_{\wh{P}^{*}\!f}
	 \wt\phi_{\wh{P}^{*}\!g}$.    In particular if $\{f_{j}\}$ is a partition of
	 unity subordinate to $\{U_{j}\}$, then
	 $$
	 	\mu(\wt{\phi}_{1}) = \mu(\wt\phi_{\wh{P}^{*}\!f_{1}} + \dots + \wt\phi_{\wh{P}^{*}\!f_{d}}) = \sum_{j=1}^{d}
		\mu(\wt\phi_{\wh{P}^{*}\!f_{j}}) = 0
	 $$
	since homogeneous quasi-morphisms are homomorphisms when restricted to abelian subgroups
	and also that 
	$\mu(\wt\phi_{\wh{P}^{*}\!f_{j}})=0$ by
	 the vanishing property.  However $\mu(\wt\phi_{1}) > 0$, so we have a contradiction.
\end{proof}

\begin{rem}
	The proof of Theorem~\ref{t:fiberSH} also shows if there is monotone quasi-morphism $\mu\fc \wt{\Cont}_0(V, \xi) \to \R$
	with the vanishing property and $(V, \xi)$ is completely integrable contact manifold, in the sense of
	Khesin--Tabachnikov \cite{KheTab10C}, then at least one of the pre-Lagrangian fibers is non-displaceable.
\end{rem}

\noindent
Let us now prove Proposition~\ref{p:subheavyReebImpliesSuperheavy} which states that if a subheavy subset $Y \subset V$ is preserved
by a positive contact vector field, then it is $\mu$-superheavy.

\begin{proof}[Proof of Proposition~\ref{p:subheavyReebImpliesSuperheavy}]
	We will assume that $Y$ is invariant under the
	flow for the Reeb vector field $R_{\alpha}$, since any positive contact vector field is the Reeb vector for some contact form
	\cite[Chapter 3.4]{McDSal98I}.
	Given $h \in C^{\infty}(V)$ such that $h|_{Y} > 0$, since $Y$ is closed we have $h|_{Y} \geq c$ for some positive $c \in \R$.  Let $m$ be a positive integer and 
	note that $\phi_{g}^{t} = \phi_{-mc}^{t}\phi_{mh}^{t}$ where
	$$
	g_{t}:= \alpha(-mc R_{\alpha} + d\phi^{t}_{-mc}(m X_{h}) \circ (\phi_{-mc}^{t})^{-1}) = m(-c + h\circ\phi_{mc}^{t}).
	$$
	Since $\phi_{mc}^{t} = \phi^{t}_{mcR_{\a}}$ is a reparametrization of the Reeb flow, which preserves $Y$, it follows
	that $g_{t}|_{Y} \geq 0$ and hence $\mu(\wt\phi_{-mc}\wt\phi_{mh}) = \mu(\wt\phi_{g}) \geq 0$ since $\mu$ is monotone
	and $Y$ is $\mu$-subheavy.
	 Since $h$ is autonomous it follows $\wt\phi_{mh} = \wt\phi_{h}^{m}$ and because
	 $\mu$ is a homogeneous quasi-morphism we have
	 $$
	 	m\, \mu(\wt\phi_{h}) = \mu(\wt\phi_{mh}) \geq \mu(\wt\phi_{-mc}\wt\phi_{mh}) + \mu(\wt\phi_{mc}) - D \geq 
		\mu(\wt\phi_{mc}) - D = m\, \mu(\wt\phi_{c}) - D.
	 $$
	 By dividing through by $m$ and taking the limit as $m \to \infty$ gives
	 $\mu(\wt\phi_{h}) \geq \mu(\wt\phi_{c})$ and $\mu(\wt\phi_{c}) > 0$ since $V$ is $\mu$-superheavy
	 by Proposition~\ref{p:Basics}(iv).
\end{proof}

\subsection{Proof of results from Section~\ref{s:SymplecticQMQS}}\label{s:ProofPrequantRigid}

Here we will prove the results in Section~\ref{s:SymplecticQMQS} about the relation between
quasi-morphisms on $\wt\Cont_{0}(V)$ and $\wt\Ham(M)$ when $\pi\fc (V, \alpha) \to (M, \w)$ is a prequantization.
Before proving Theorem~\ref{thm_HamQuasimorphism} we need the following lemma.

\begin{lemma}\label{l:PullApart}
	Let $\pi\fc (V, \alpha) \to (M, \w)$ be a prequantization and let $\mu\fc \wt\Cont_{0}(V) \to \R$ be a
	monotone quasi-morphism, then
	$$
		\mu(\wt{\phi}_{c + \pi^{*}\!H}) = \left(\int_{0}^{1}c(t)dt\right)\mu(\wt{\phi}_{1}) + \mu(\wt{\phi}_{\pi^{*}\!H})
	$$
	for all smooth functions $H\fc [0,1] \times M \to \R$ and $c\fc [0,1] \to \R$.
\end{lemma}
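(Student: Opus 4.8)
The plan is to use that $c$ and $\pi^{*}H$ Poisson commute in order to factor $\wt\phi_{c+\pi^{*}H}$, reduce the statement to the identity $\mu(\wt\phi_{c}) = \big(\int_{0}^{1}c(t)\,dt\big)\mu(\wt\phi_{1})$, and then prove this last identity from monotonicity. First I would note that for each fixed $t$ the contact Hamiltonian $c(t)$ is constant on $V$, so its contact vector field is $X_{c(t)} = c(t)R_{\a}$ (one checks $\a(cR_{\a}) = c$ and $\iota_{cR_{\a}}d\a = 0 = -dc + dc(R_{\a})\a$ since $dc=0$ on $V$); in particular $\phi_{c}^{t} = \phi_{R_{\a}}^{\,C(t)}$ is a reparametrised Reeb flow, where $C(t) := \int_{0}^{t}c(s)\,ds$. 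Since $\pi^{*}H$ is constant along the fibres of $\pi$ we have $d\pi(R_{\a}) = 0$, hence $\{c,\pi^{*}H\}_{\a} = -d(\pi^{*}H)(cR_{\a}) = -c\,dH(d\pi(R_{\a})) = 0$ by \eqref{e:ContactPB}. Therefore $R_{\a}$ commutes with $X_{(\pi^{*}H)_{t}}$ for all $t$, so $\phi_{c}^{t}$ commutes with $\phi_{\pi^{*}H}^{t}$, and a direct computation shows that $t\mapsto\phi_{c}^{t}\circ\phi_{\pi^{*}H}^{t}$ is generated by $c + \pi^{*}H$; thus $\phi_{c+\pi^{*}H}^{t} = \phi_{c}^{t}\circ\phi_{\pi^{*}H}^{t}$. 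Passing to $\wt\Cont_{0}(V)$, the standard fact that the pointwise product of two paths based at $\id$ in $\Cont_{0}(V)$ represents, in the fundamental groupoid, the same element as the concatenation of the first path with the left translate of the second, gives $\wt\phi_{c+\pi^{*}H} = \wt\phi_{c}\cdot\wt\phi_{\pi^{*}H}$, and these two elements commute. Since homogeneous quasi-morphisms restrict to homomorphisms on abelian subgroups, $\mu(\wt\phi_{c+\pi^{*}H}) = \mu(\wt\phi_{c}) + \mu(\wt\phi_{\pi^{*}H})$.

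It remains to compute $\mu(\wt\phi_{c})$. Set $a := \int_{0}^{1}c(t)\,dt$ and write $\wt\phi_{a}$ for the element of $\wt\Cont_{0}(V)$ represented by $\{\phi_{R_{\a}}^{\,at}\}_{t\in[0,1]}$. The homotopy $(s,t)\mapsto\phi_{R_{\a}}^{\,(1-s)C(t)+s\,at}$ fixes the endpoints $\id$ and $\phi_{R_{\a}}^{\,a}$, so $\wt\phi_{c} = \wt\phi_{a}$; the same type of reparametrisation homotopy shows that $a\mapsto\wt\phi_{a}$ is a homomorphism $(\R,+)\to\wt\Cont_{0}(V)$ whose image is abelian. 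Hence $\lambda(a) := \mu(\wt\phi_{a})$ is additive. Moreover $\lambda$ is non-decreasing: if $a\le b$ then $\wt\phi_{a}^{-1}\wt\phi_{b} = \wt\phi_{b-a}$ is generated by the nonnegative constant $b-a$, so $\id\preceq\wt\phi_{b-a}$, i.e.\ $\wt\phi_{a}\preceq\wt\phi_{b}$, and $\mu$ is monotone. An additive non-decreasing function $\R\to\R$ is linear, so $\lambda(a) = a\,\lambda(1) = a\,\mu(\wt\phi_{1})$, which is the desired value of $\mu(\wt\phi_{c})$. Combining this with the previous paragraph proves the lemma.

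The one point that needs care is the identification $\wt\phi_{c+\pi^{*}H} = \wt\phi_{c}\cdot\wt\phi_{\pi^{*}H}$ in the universal cover; if one wishes to avoid invoking the groupoid statement, the homotopy realising it can be written out by hand using that $\phi_{c}^{t}$ and $\phi_{\pi^{*}H}^{t}$ commute. Everything else — the Poisson bracket computation, the reparametrisations, and the passage from additivity to linearity — is routine.
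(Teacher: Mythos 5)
Your proposal is correct and follows essentially the same route as the paper: factor $\wt\phi_{c+\pi^{*}H}=\wt\phi_{c}\,\wt\phi_{\pi^{*}H}$ using that the two flows commute, apply the fact that homogeneous quasi-morphisms are homomorphisms on abelian subgroups, reduce the time-dependent constant $c$ to the constant $\int_0^1 c(t)\,dt$ by a reparametrisation homotopy, and deduce $\mu(\wt\phi_{\kappa})=\kappa\,\mu(\wt\phi_{1})$ from homogeneity plus monotonicity (your additive-and-monotone-implies-linear packaging is just a rephrasing of the paper's integers-to-rationals-to-reals argument). The only cosmetic point is the sentence deriving $d\pi(R_{\alpha})=0$, where the logic reads backwards --- $d\pi(R_{\alpha})=0$ holds because $R_{\alpha}$ generates the fibrewise $S^{1}$-action, and this is what makes $\pi^{*}H$ Reeb-invariant --- but the facts you use are correct.
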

\begin{proof}
	By using the contact Poisson bracket \eqref{e:ContactPB}, or just the definitions, 
	one sees that
	$\wt\phi_{c}$ and $\wt\phi_{\pi^{*}\!H}$ commute in $\wt\Cont_{0}(V)$ and 
	$\wt\phi_{c+\pi^{*}\!H} = \wt\phi_{c}\,\wt\phi_{\pi^{*}\!H}$.
	Therefore since homogeneous quasi-morphisms are homomorphisms on abelian subgroups
	$$
		\mu(\wt{\phi}_{c + \pi^{*}\!H}) = \mu(\wt{\phi}_{c}) + \mu(\wt\phi_{\pi^{*}\!H})
	$$
	and hence it suffices to prove $\mu(\wt{\phi}_{c}) = \left(\int_{0}^{1}c(t)dt\right)\mu(\wt{\phi}_{1})$.
	
	Since $\wt\phi_{\kappa} = \wt\phi_{c}$ via a time-reparametrization where
	$\kappa = \int_{0}^{1} c(t)dt$, this reduces to proving $\mu(\wt\phi_{\kappa}) = \kappa\, \mu(\wt\phi_{1})$ for
	all real numbers $\kappa \in \R$.
	For any integer $m \in \Z$, this holds
	since
	$\mu$ is homogeneous and $\wt\phi_{m} = \wt\phi_{1}^{m}$.  This extends to rational numbers
	and since $\mu$ is monotone it then holds for all real scalars.	
\end{proof}

\begin{proof}[Proof of Theorem~\ref{thm_HamQuasimorphism}]
	Since $\pi^{*}\fc \wt\Ham(M) \to \wt\Cont_{0}(V)$ from \eqref{e:PullBackHamiltonians} is a homomorphism it is clear that
	$\mu_{M}$ is a quasi-morphism.  For stability let
	$c(t):= \min_{M} (H_{t}- G_{t})$, then by monotonicity and Lemma~\ref{l:PullApart} we have
	$$
		\left(\int_{0}^{1}c(t)dt\right)\mu(\wt{\phi}_{1}) + \mu(\wt{\phi}_{\pi^{*}\!G})
		= \mu(\wt{\phi}_{c + \pi^{*}\!G}) \leq \mu(\wt{\phi}_{\pi^{*}\!H})
	$$
	and hence
	$$
		\left(\int_{0}^{1}\min_{M} (H_{t}- G_{t})\,dt\right)\mu(\wt{\phi}_{1}) \leq \mu(\wt{\phi}_{\pi^{*}\!H}) - \mu(\wt{\phi}_{\pi^{*}\!G})\,.
	$$
	After translating to the definition of $\mu_{M}$ in \eqref{e:muM} this is the left-hand part of the stability condition
	\eqref{e:stable}.
	  The
	right-hand side is proved analogously.  
 
	Lemma~\ref{l:PullApart} shows that the formulas for $\zeta_{\mu_{M}}$ in
	\eqref{e:QuasiStateFormula} and Theorem~\ref{thm_HamQuasimorphism} are equal.
	It follows from the formula in Theorem~\ref{thm_HamQuasimorphism} that if $\mu$
	has the vanishing property, then so does $\zeta_{\mu_{M}}$.  This is because if
	$X \subset M$ is displaceable by an element of $\Ham(M)$, then
	$\pi^{-1}(X) \subset V$ is displaceable by an element of $\Cont_{0}(V)$.
	Going back to the quasi-morphism $\mu_{M}$, it follows from \cite[Proposition 1.7]{Bor13Q} that
	$\mu_{M}$ has the Calabi property if the associated quasi-state $\zeta_{\mu_{M}}$ has the vanishing
	property.
\end{proof}

\begin{proof}[Proof of Proposition~\ref{prop_prequant_rigidity}]
	For item (i), it is enough to show that if $H \in C^\infty(M)$ is such that $H|_{\pi(Y)}=0$, then $\zeta_{\mu_{M}}(H) = 0$. 
	If $H|_{\pi(Y)} = 0$, then $\pi^*H|_Y=0$ and $\mu(\wt\phi_{\pi^{*}H}) = 0$ by the definition of $Y$ being
	$\mu$-subheavy.  It then follows from Theorem~\ref{thm_HamQuasimorphism} that $\zeta_{\mu_{M}}(H) = 0$.

	For item (ii), let $Y  =\pi^{-1}(X)$ and let $h \in C^\infty(V)$ be such that $h|_Y > 0$. 
	There is $H \in C^\infty(M)$ with $\pi^*H \leq h$ and $H|_X > 0$. 
	From the monotonicity of $\mu$ and Theorem~\ref{thm_HamQuasimorphism} we have
	$$
	\mu(\wt\phi_h) \geq \mu(\wt\phi_{\pi^*H}) = \mu(\wt\phi_1)\zeta_{\mu_{M}}(H)
	$$
	and therefore we are done since $\zeta_{\mu_{M}}(H) \geq \min_{X} H > 0$ by the definition of $\zeta_{\mu_{M}}$-superheavy
	and since $\mu(\wt\phi_{1}) > 0$ by Proposition~\ref{p:Basics}(iv). 
\end{proof}

\subsection{Proofs about Givental's quasi-morphism}\label{s:VanishingProof}

\subsubsection{A brief summary of Givental's quasi-morphism}\label{s:discriminant}

Recall that a point $v \in (V, \xi)$ in a contact manifold is a \tb{discriminant point} for a contactomorphism
$\phi \in \Cont(V, \xi)$ if
\begin{equation}\label{e:dis}
		\phi(v) = v \quad\mbox{and}\quad (\phi^{*}\alpha)_{v} = \alpha_{v}
\end{equation}
for some (and hence every) contact form $\alpha$ and  
the \tb{discriminant} of $\Cont_{0}(V, \xi)$ is
$$
	\Sigma(V, \xi) := \{ \phi \in \Cont_{0}(V, \xi) \mid \mbox{$\phi$ has at least one discriminant point}\}.
$$
A $C^\infty$-generic contactomorphism has no discriminant points.
Indeed if $v$ is a discriminant point of $\phi$, then the image of $d\phi_{v} - \id_{T_vV}$ is contained in $\xi_v$
and hence $d\phi_{v} - \id_{T_vV}$ has a nontrivial kernel.   This means
$v$ is a degenerate fixed point and it is a standard fact that $C^\infty$-generic contactomorphisms do not have degenerate fixed points 
(see \cite[Theorem 3.1]{HofSal95F} for a proof in the Hamiltonian case).
In fact any $\phi \in \Cont_{0}(V)$ on the discriminant 
$\Sigma(V)$ can be perturbed off $\Sigma(V)$ via the Reeb flow, but we will not include the proof since this is not 
necessary for what follows.

In \cite{Giv90N} Givental showed how to coorient the discriminant 
$\Sigma \subset \Cont_{0}(\RP^{2d-1})$ using generating functions.
Given a smooth path $\gamma\fc [0,\tau] \to \Cont_{0}(\RP^{2d-1})$ with endpoints not on $\Sigma$,
the coorientation gives a well-defined intersection index between $\gamma$ and $\Sigma$ denoted
$$
	\mu^{G}(\g) \in \Z
$$
which Givental called the \textbf{nonlinear Maslov index}.  From the intersection viewpoint, Givental
specified \cite[Section 9]{Giv90N} conventions so that $\mu^{G}$ is defined for all paths of contactomorphisms.
Alternatively, as noted by Colin--Sandon \cite[Section 7]{ColSan12T}, the nonlinear Maslov index can be defined purely in 
terms of generating families, 
leading to a uniform definition of the nonlinear Maslov index for any smooth path of contactomorphisms of $\RP^{2d-1}$.
Here are some key properties of the nonlinear Maslov index:
\begin{enumerate}
\item Given two paths $\gamma_i: [0, \tau_i] \to \Cont_0(\RP^{2d-1})$ with
$\gamma_0(\tau_0) = \gamma_1(0)$, one has
\begin{equation}\label{e:concat}
	\mu^G(\gamma_0) + \mu^{G}(\gamma_1) = \mu^G(\gamma_0*\gamma_1)
\end{equation}
where $\gamma_0*\gamma_1:[0, \tau_0 + \tau_1] \to \Cont_0(\RP^{2d-1})$ is their concatenation.
\item For any path $\gamma$ in $\Cont_0(\RP^{2d-1})$ and element $\phi \in \Cont_0(\RP^{2d-1})$
\begin{equation}\label{e:QMP}
		\abs{\mu^{G}(\gamma\phi) - \mu^G(\gamma)} \leq 2d
\end{equation}
where $\gamma\phi$ is the path defined by $t \mapsto \gamma(t)\phi$.
\item If a path $\g$ in $\Cont_{0}(\RP^{2d-1})$ is disjoint from the discriminant, then
\begin{equation}\label{e:vanish}
	\mu^{G}(\g) = 0.
\end{equation}
\item The nonlinear Maslov index $\mu^{G}(\g)$ is invariant under homotopies of $\g$ with fixed endpoints.
\end{enumerate}
The first item follows from the construction as an intersection index, the second item is \cite[Theorem 9.1(a)]{Giv90N},
and the final two properties are established by both Givental \cite[Section 9]{Giv90N} and Colin--Sandon \cite[Section 7]{ColSan12T}.

If $\cP\!\Cont_{0}(\RP^{2d-1})$ denotes the space of contact isotopies $\{\phi^{t}\}_{t \in [0,1]}$
with $\phi^{0} = \id$, then one defines the \textbf{asymptotic nonlinear Maslov index} to be
\begin{equation}\label{e:DefGiv}
	\mu_{\Giv}(\{\phi^{t}\}_{t \in [0,1]}) := \lim_{\tau \to \infty} \frac{\mu^{G}(\{\phi^{t}\}_{t \in [0, \tau]})}{\tau}
\end{equation}
where $\{\phi^{t}\}_{t \in [0,\tau]}$ is given by concatenation so $\phi^{k+s}:=\phi^{s}(\phi^{1})^{k}$ for $s \in [0,1]$ and $k \in \N$.
Since $\mu^{G}$ is invariant under homotopies with fixed endpoints, the map in \eqref{e:DefGiv} descends to a map
$$
	\mu_{\Giv}\fc \wt\Cont_{0}(\RP^{2d-1}) \to \R
$$
and this is the definition of Givental's quasi-morphism from \eqref{e:GiventalQM}.
As a special case of \eqref{e:DefGiv} we have 
\begin{equation}\label{e:limitdef}
	\mu_{\Giv}(\wt{\phi}) = \lim_{m\to \infty} \frac{\mu^{G}(\wt{\phi}^m)}{m}
\end{equation}
for $\wt{\phi} \in \wt{\Cont}_0(\RP^{2d-1})$
and hence $\mu_{\Giv}$ is homogeneous: $\mu_{\Giv}(\wt{\phi}^m) = m\, \mu_{\Giv}(\wt{\phi})$.


\subsubsection{A subheavy Legendrian}

\begin{proof}[Proof of Lemma~\ref{exam_Legendrian_RP_subheavy}]
	It suffices to prove
	$\RP^{d-1}_{L} \subset \RP^{2d-1}$ is $\mu_{\text{Giv}}$-subheavy
	since it is preserved by radial projection \eqref{e:RescaleContact}.	
	
	If $h$ is an autonomous contact Hamiltonian that vanishes on $\RP^{d-1}_{L}$,
	then $X_{h}$ is always tangent to $\RP_{L}^{d-1}$ since it is Legendrian.
	Therefore the Legendrian nonlinear Maslov index $\mu(\lambda)$ from \cite[Section 9]{Giv90N}
	of the constant path of Legendrians $\lambda := \{\phi_{h}^{t}(\RP_{L}^{d-1})\}_{t\in [0,\tau]}$ 
	vanishes.  By the definition of $\mu_{\Giv}$ in \eqref{e:DefGiv} and
	\cite[Section 9, Corollary 2]{Giv90N} we know
	$$
		\mu_{\Giv}(\wt\phi_{h}) = \lim_{\tau \to \infty} \frac{\mu^{G}(\{\phi^{t}_{h}\}_{t \in [0,\tau]})}{\tau}
		=  \lim_{\tau \to \infty} \frac{\mu(\{\phi^{t}_{h}(\RP_{L}^{d-1})\}_{t \in [0,\tau]})}{\tau}
	$$
	so $\mu_{\Giv}(\wt\phi_{h}) = 0$ and therefore $\RP^{d-1}_{L}$ is $\mu_{\Giv}$-subheavy.
\end{proof}

\subsubsection{Proving properties of Givental's quasi-morphism in Proposition \ref{prop_ANLMI_has_vanishing}}

\begin{proof}[Proof of Proposition \ref{prop_ANLMI_has_vanishing} (Monotonicity)]
	By \cite[Theorem 9.1(b)]{Giv90N}, or equivalently \cite[Lemma 7.6]{ColSan12T},
	we know $\mu^G(\wt{\phi}) \geq 0$ if $\wt\phi \succeq \id$, so it follows from \eqref{e:limitdef}
	that 
	$$
		0 \leq \mu_{\Giv}(\wt{\phi}) \quad\mbox{if}\quad  \id \preceq \wt\phi.
	$$
	Now if $\wt{\phi} \preceq \wt{\psi}$, then $\id \preceq \wt{\psi}^m \circ \wt{\phi}^{-m}$
	and hence $\mu_{\Giv}(\wt{\psi}^m \circ \wt{\phi}^{-m}) \geq 0$.  Using this and that
	$\mu_{\Giv}$ is a homogeneous quasi-morphism, we get
	$$
		m\, \mu_{\Giv}(\wt\psi) - m\, \mu_{\Giv}(\wt\phi) =
		\mu_{\Giv}(\wt\psi^m) - \mu_{\Giv}(\wt\phi^m) \geq
		\mu_{\Giv}(\wt\psi^m \circ \wt\phi^{-m}) - D \geq -D.
	$$
	Dividing by $m$ and taking the limit $m \to \infty$, gives $\mu_{\Giv}(\wt{\phi}) \leq \mu_{\Giv}(\wt{\psi})$
	and hence $\mu_{\Giv}$ is monotone.
\end{proof}

\begin{proof}[Proof of Proposition \ref{prop_ANLMI_has_vanishing} ($C^0$-continuity)]
	Givental proved in \cite[Corollary 3, Section 9]{Giv90N} that $\mu_{\Giv}$ is $C^0$-continuous for
	time-independent contact Hamiltonians and as he explained to us the proof generalizes
	to time-dependent contact Hamiltonians in the following way.
	
	Suppose we have $C^0$-convergence $h^{(n)} \to h$ of contact Hamiltonians in 
	$C^{\infty}([0,1] \times \RP^{2d-1})$.  For a given $\epsilon > 0$, pick an integer $m > 0$ such that
	$6d/m < \epsilon$ and by \cite[Theorem 9.1(c)]{Giv90N} we know that if $n$ is sufficiently large, then
	$$
		\abs{\mu^G(\{\phi_{h^{(n)}}^t\}_{t \in [0,m]}) - \mu^G(\{\phi_{h}^t\}_{t \in [0,m]})} \leq 2d.
	$$
	By \eqref{e:concat} and \eqref{e:QMP}, for any two integers $m, N > 0$ and $k \in C^{\infty}([0,1] \times \RP^{2d-1})$
	one has
	$$
		\abs{\mu^{G}(\{\phi_k^t\}_{t \in [0,Nm]}) - N\,\mu^G(\{\phi_k^t\}_{t \in [0,m]})} \leq 2dN
	$$
	which when applied to the previous inequality gives that if $n$ is sufficiently large, then
	$$
		\abs{\mu^G(\{\phi_{h^{(n)}}^t\}_{t \in [0,Nm]}) - \mu^G(\{\phi_{h}^t\}_{t \in [0,Nm]})} \leq 6dN.
	$$
	Dividing by $Nm$ and taking the limit as $N \to \infty$ gives
	$$
		\abs{\mu_{\Giv}(\wt\phi_{h^{(n)}}) - \mu_{\Giv}(\wt\phi_h)} \leq \frac{6d}{m} < \epsilon
	$$
	if $n$ is sufficiently large and therefore $\lim_{n \to \infty} \mu_{\Giv}(\wt\phi_{h^{(n)}}) = \mu_{\Giv}(\wt\phi)$.
\end{proof}

\begin{proof}[Proof of Proposition \ref{prop_ANLMI_has_vanishing} (Vanishing property)]
	For an open $U \subset \RP^{2d-1}$ suppose there is a 
	$\psi \in \Cont_0(\R P^{2d-1})$ such that $\psi(U) \cap U = \emptyset$
	and without loss of generality we may assume $\psi$ has no discriminant points.
	By \eqref{e:concat} we know
	$$
		\abs{\mu^{G}(\{\phi_{h}^{t}\psi\}_{t \in [0, \tau]}) - \mu^{G}(\{\phi_{h}^{t}\}_{t \in [0, \tau]})} \leq 2d
	$$
	so if $\mu^{G}(\{\phi_{h}^{t}\psi\}_{t \in [0, \tau]}) = 0$ for all $\tau \geq 0$, then
	it will follow from \eqref{e:DefGiv} that
	$$
	\mu_{\Giv}(\wt\phi_{h}) = \lim_{\tau \to \infty} \frac{\mu^{G}(\{\phi_{h}^{t}\}_{t \in [0,\tau]})}{\tau} = 0\,.
	$$		
	Therefore by \eqref{e:vanish} it remains to prove $\phi_h^t\psi$ has no discriminant points
	for all $t \geq 0$.
	Assume $p$ is a discriminant point for some $\phi_{h}^{t}\psi$. If $p \in U$, then $\psi(p) = (\phi_{h}^{t})^{-1}(p) \in U$ 
	but this contradicts that $\psi(U) \cap \ol{U} = \emptyset$.  If $p \not\in U$, then $\psi(p) = (\phi_{h}^{t})^{-1}(p) = p$ so 
	$p$ is a fixed point of $\psi$ and also a discriminant point of $\psi$, but we assumed they did not exist. 
\end{proof}



\bibliographystyle{alpha}
\bibliography{ContactRed}

\bigskip

\noindent
\begin{center}
\begin{tabular}{ll}
Matthew Strom Borman & Frol Zapolsky\\
Stanford University &  University of Haifa\\
borman@stanford.edu & frol.zapolsky@gmail.com
\end{tabular}
\end{center}

\end{document}